\title[Derived functors]{On the derived functors of destabilization and of 
iterated loop functors}
\author[G. Powell]{Geoffrey Powell}
\address{Laboratoire angevin de recherches en mathématiques (LAREMA),
CNRS, Université d’Angers, Université Bretagne Loire, 
2 Bd lavoisier 49045 Angers Cedex 01}
\email{Geoffrey.Powell@math.cnrs.fr}
\keywords{Steenrod algebra -- unstable module -- destabilization -- iterated 
loop functor -- derived functor -- total Steenrod power}
\subjclass[2000]{Primary 55S10; Secondary 18E10}
\date{}
\newtheorem{thm}{Theorem}[subsection]
\newtheorem{prop}[thm]{Proposition}
\newtheorem{cor}[thm]{Corollary}
\newtheorem{lem}[thm]{Lemma}
\theoremstyle{definition}
\newtheorem{defn}[thm]{Definition}
\newtheorem{exam}[thm]{Example}
\newtheorem{exer}[thm]{Exercise}
\newtheorem{hyp}[thm]{Hypothesis}
\theoremstyle{remark}
\newtheorem{rem}[thm]{Remark}
\newtheorem{nota}[thm]{Notation}
\newtheorem{conj}[thm]{Conjecture}
\newcommand{\lcx}{\mathfrak{C}}
\newcommand{\conn}{\mathsf{conn}}
\renewcommand{\hom}{\mathrm{Hom}}
\newcommand{\ext}{\mathrm{Ext}}
\newcommand{\unstalg}{\mathscr{K}}
\newcommand{\dash}{\mbox{-}}
\newcommand{\chcx}{\mathrm{Ch}}
\newcommand{\dhom}{\mathbb{D}}
\newcommand{\dcx}{\mathfrak{D}}
\newcommand{\largetensor}{\underline{\underline{\otimes}}}
\newcommand{\amod}{\mathscr{M}}
\newcommand{\unst}{\mathscr{U}}
\newcommand{\nil}{\mathscr{N}il}
\renewcommand{\phi}{\varphi}
\renewcommand{\epsilon}{\varepsilon}
\newcommand{\nat}{\mathbb{N}}
\newcommand{\zed}{\mathbb{Z}}
\newcommand{\field}{\mathbb{F}}
\newcommand{\op}{^\mathrm{op}}
\newcommand{\cala}{\mathscr{A}}
\newcommand{\colim}[1]{\underset{{\scriptstyle #1}}{\mathrm{colim}\ }}
\newcommand{\tor}{\mathrm{Tor}}
\begin{document}
\setcounter{tocdepth}{1}
\begin{abstract}
These notes explain how to construct  small functorial chain complexes which 
calculate the derived functors 
of destabilization (respectively iterated loop functors) in the theory of 
modules over the mod $2$ Steenrod algebra;  
this shows how to unify results of Singer and of Lannes and Zarati. 
\end{abstract}

\maketitle
\section{Introduction}

These  notes  consider the interface between unstable
modules $\unst$ over the
Steenrod algebra $\cala$ and $\amod$, the category 
of $\cala$-modules, in particular the structure of the left derived functors of
destabilization 
$D: \amod \rightarrow \unst$ (which is the left adjoint to the inclusion $\unst
\subset \amod$) and of the family of 
iterated loop functors, $\Omega^t : \unst \rightarrow \unst$ (the left adjoint
to the suspension functor 
 $\Sigma^t : \unst \rightarrow \unst$), for $t \in \nat$. For clarity of 
exposition, the prime 
$2$ is privileged,  and the underlying field $\field$ is usually the prime field 
$\field_2$; there are 
however analogous results for odd primes.

The motivation comes from the study of mod $2$ singular cohomology. The 
Steenrod 
algebra $\cala$ is the algebra of stable cohomology operations 
for mod $2$ cohomology, hence the cohomology of a spectrum (i.e. an object from 
{\em stable} homotopy theory) is naturally an $\cala$-module. The cohomology of 
a 
{\em space}
has more structure: it is an {\em unstable} $\cala$-module and is equipped with 
the cup product. The suspension on spectra (or pointed topological spaces) 
corresponds via reduced cohomology to  the algebraic suspension; 
on $\amod$ this is an equivalence of categories but not on unstable modules 
$\unst$. The algebraic loop functor $\Omega^t : \unst \rightarrow \unst$ is a 
first approximation to the behaviour in cohomology of the iterated loop functor 
on pointed topological spaces. Similarly, the destabilization functor  $D: 
\amod 
\rightarrow \unst$ gives a first algebraic approximation to the behaviour in 
cohomology of the infinite loop space functor $\Omega^\infty$ from spectra to 
pointed spaces. (This is  an over-simplification, since it takes no account of 
the cup product; see Section \ref{subsect:motiv} for slightly more precision.)

The two basic algebraic  ingredients which are used are the Singer functors 
$R_s$, $s \in 
\nat$,  which
are
defined for all $\cala$-modules, and the Singer residue map $\field [u^{\pm 1}]
\rightarrow \Sigma^{-1}\field$, which is
$\cala$-linear and induces differentials. Part of the interest of the current
approach is that it provides 
a clear explanation of the relationship between the methods of Lannes and Zarati
\cite{lannes_zarati_deriv_destab} and those of Singer 
(\cite{singer_new_chain_cx} etc.). 
The work of Lannes and Zarati makes no allusion to chain complexes; from the 
current viewpoint, they are considering the degenerate case where the 
differential is trivial. 
It is worth noting that their argument makes essential usage of connectivity, 
which is also a key point in the strategy used here.

This text explains how to construct a natural chain complex $\dcx M$, for $M$ an
$\cala$-module, with homology 
calculating the derived functors of destabilization, and, for $t \in \nat$ and
$N$ an unstable module, a chain complex
$\lcx^t N$, with homology calculating the derived functors of $\Omega^t$. The
existence of such a  
chain complex goes back to the work of Singer
\cite{singer_loops_I,singer_loops_II}, but the construction given 
here is new.

The complex $\lcx ^t N$ is given as a quotient of $\dcx  (\Sigma^{-t}N)$
and the 
projection $$\dcx (\Sigma^{-t}N) \twoheadrightarrow \lcx^tN$$ induces  in 
homology the
natural transformation 
$
D_s (\Sigma^{-t}N) \rightarrow \Omega^t_s N
$
between left derived functors. 

The chain complex $\dcx M$ is also related to the chain complex $\Gamma^+M$
introduced by Singer \cite{singer_invt_lambda} 
and Nguy{\~\ecircumflex}n H. V. H{\uhorn}ng  and  Nguy{\~\ecircumflex}n Sum 
\cite{hung_sum} (who work at odd primes), to calculate the
homology of $M$ over the Steenrod algebra. 
Namely, there is a natural inclusion 
\[
 \dcx M
\hookrightarrow 
\Gamma^+M
\]
which, in homology, induces the Lannes-Zarati homomorphism (up to dualizing)
\cite{lannes_zarati_deriv_destab},
the derived form of:
\[
 D M 
\rightarrow 
\field \otimes_\cala M, 
\]
thus giving rise to $D_s M \rightarrow \mathrm{Tor}^\cala _s (\field, M)$. (A
word of warning: $DM$ is an unstable module, in particular concentrated
in degrees $\geq 0$, hence the map to $\field \otimes _\cala M$ is not in
general surjective.) This morphism is of interest, since it is intimately 
related to the mod $2$ Hurewicz morphism.

The final section indicates some recent developments in the subject and some 
open problems. 
 In particular, the potential higher chromatic analogues of this theory are 
 of significant interest. This material is only outlined  here.

\bigskip
A number of exercises (of varying levels of difficulty) are included, 
reflecting the origin of this text as lecture notes; likewise, some proofs are 
left as exercises. The reader is encouraged 
to attempt them all, since they are essential to the understanding of the 
subject.

\bigskip
{\bf Acknowledgement:} The author is grateful to the anonymous referee for 
their careful reading of the manuscript and for their suggestions.

The author would like to thank the VIASM for the invitation to give this 
lecture course and for providing excellent working 
conditions during his visit in August 2013. He is also grateful to the 
participants of the special activity on {\em Algebraic Topology} for their 
interest. 

Finally, the  author also wishes to acknowledge the financial support provided 
by the 
\href{http://viasm.edu.vn/?lang=en}{VIASM} and 
\href{https://www.math.univ-paris13.fr/formath/index.php/fr/liafv}{LIA 
Formath}.

\tableofcontents

\section{Background}

Throughout, unless indicated otherwise, the underlying prime is taken to be $2$ 
and the ground field 
$\field$ is 
the field $\field_2$ with two elements. 
All the results introduced have analogues for odd primes, although the arguments
are slightly more complicated in the odd primary situation.

A general reference for the theory of (unstable) modules over the Steenrod
algebra is the book by Schwartz \cite{schwartz_book} and, for $\cala$-modules, 
that of Margolis \cite{margolis}. 
References for the results stated can be found  for example in the 
author's
papers \cite{powell_mod_loop,p_destab,p_singer_Rs}; many go 
back to Massey and Peterson and the work of Singer.

%%%%%%%%%%%%%%%%%%%%%%%%%%%%%%%%%%%%%%%%%%%%%%%%%%%%%%%%%%%%%%%%%%%%%%%%%%
\subsection{The Steenrod algebra as a quadratic algebra}

The mod $2$ Steenrod algebra $\cala$ is, by definition, the algebra of stable
cohomology
operations for  mod $2$ singular 
cohomology. Hence the Steenrod algebra can be identified with the cohomology
$H^* (H\field_2)$ (here $H^*(-)$ always
 denote cohomology with mod $2$ coefficients) of the 
Eilenberg-MacLane spectrum, $H\field_2$, which represents mod $2$ cohomology.

The algebra $\cala$ is a non-homogeneous quadratic algebra, as explained
below. Let $\tilde{\cala}$ be the  algebra 
\[
 \tilde{\cala} := T (Sq^i | i \geq 0) / \sim 
\]
where $Sq^i$ has degree $i$ and $\sim$ corresponds to the Adem relations (for 
$a <2b$):
\[
 Sq^a Sq^b = \sum_j^{[a/2]} \binom{b-j-1}{a-2j} Sq^{a+b-j} Sq^j,
\]
where $Sq^0$ is considered as an independent generator.
Since the relations 
are homogeneous of length $2$, the algebra $\tilde{\cala}$ is a homogeneous
quadratic algebra and, in particular, has a {\em length grading} in addition to
the 
internal grading coming from the degrees of the generators. 

There is a surjection of algebras 
$
\tilde{\cala}
\twoheadrightarrow \cala
$
which corresponds to imposing the relation $Sq^0 =1$; the algebra $\cala$
inherits a {\em length filtration}
from $\tilde{\cala}$ (no longer a grading). The relations defining $\cala$ have 
length $\leq 2$, which
means  that 
$\cala$ is quadratic. 

The graded $\overline{\cala}$ associated to the length filtration can also be
described 
as a quotient of $\tilde{\cala}$, namely
\[
 \overline{\cala} = \tilde{\cala}/ \langle Sq^0 \rangle.
\]
 This is again a homogeneous quadratic algebra. Moreover, it has the important
property that it is 
{\em Koszul}. This notion, introduced by Priddy \cite{priddy}, is at the
origin of the existence
of small resolutions for calculating the homology of the Steenrod algebra; the
Koszul dual is the (big) Lambda algebra. 

The construction of the complexes introduced here is related to the quadratic 
Koszul nature of
$\cala$ and also to the relationship between 
the Steenrod algebra and invariant theory; many of the ideas go back
to the work of Singer \cite{singer_loops_I,
singer_loops_II,singer_invt_lambda} etc.  

\begin{rem}
 The odd primary analogues depend upon the work of M{\`u}i
\cite{mui_cohom_operations,mui_mod_invt_symm}, which describes 
the (more complicated) relationship between invariant theory and the Steenrod
algebra.

See for example the work of Nguy{\~\ecircumflex}n H. V. H{\uhorn}ng  and  
Nguy{\~\ecircumflex}n Sum \cite{hung_sum} generalizing Singer's 
invariant-theoretic description of the Lambda algebra 
to odd primes, Zarati's generalization \cite{zarati_these} of his work with 
Lannes \cite{lannes_zarati_deriv_destab} and the author's paper 
\cite{p_destab}.
\end{rem}

%%%%%%%%%%%%%%%%%%%%%%%%%%%%%%%%%%%%%%%%%%%%%%%%%%%%%%%%%%%%%%%%%%%%%%%%%
\subsection{The category of $\cala$-modules}

Let $\amod$ denote the category of (left) $\cala$-modules. This is an 
abelian category with additional 
structure; namely, the fact that $\cala$ is a Hopf algebra implies that the
tensor
product (as graded vector spaces) of two $\cala$-modules has a natural
$\cala$-module structure. 
Explicitly, the Steenrod squares act via: 
\[
 Sq^n (x \otimes y ) = \sum_{i+j = n} Sq^i (x) \otimes Sq^j (y);
\]
this corresponds to the fact that the diagonal $\Delta : \cala \rightarrow \cala
\otimes \cala$ 
is determined by $\Delta Sq^n = \sum_{i+j = n} Sq^i \otimes Sq^j $.

Since $\cala$ is a connected algebra (concentrated in non-negative degrees, with
$\cala ^0 = \field$) 
the Hopf algebra conjugation (or antipode) $\chi : \cala^\circ \rightarrow 
\cala$ is
determined by the diagonal \cite{milnor_moore}
and is an isomorphism of algebras, where $\cala ^\circ $ is $\cala$ equipped
with the {\em opposite}
algebra structure ($\chi$ is an anti-automorphism of $\cala$).

Via  $\chi$, the category of left $\cala$-modules 
is equivalent to the category of right $\cala$-modules:  a right
$\cala$-module $M$ can be considered as a left $\cala$-module by setting 
 $ 
 a m := m \chi (a)
$
for $a \in \cala$ and $m \in M$.

Hence the category  $\amod$ has a duality functor:
\begin{eqnarray*}
 (-)^\vee : \amod \op &\rightarrow& \amod 
\\
M & \mapsto & M^\vee := \hom_\field (M , \field), 
\end{eqnarray*}
where the usual {\em right} $\cala$-module structure on $M^\vee$ is regarded as
a left structure via $\chi$. 

\begin{nota}
 For $n \in \zed$, let $\Sigma ^n \field$ denote the $\cala$-module 
$\field$ in degree $n$. 
\end{nota}

\begin{rem}
 Since $\cala$ is connected, $\{ \Sigma^n \field | n \in \zed \}$ gives a set 
of 
representatives of
isomorphism classes of the simple objects of $\amod$. 
\end{rem}

\begin{exam}
 Duality gives $(\Sigma^n \field) ^\vee = \Sigma^{-n} \field$.
\end{exam}

\begin{defn}
 For $n \in \zed$, the $n$th suspension functor  $\Sigma^n : \amod \rightarrow
\amod$ is  $\Sigma^n \field \otimes - $. 
\end{defn}

\begin{prop}
\label{prop:amod_projectives}
\ 
\begin{enumerate}
 \item 
  The category $\amod$ has enough projectives, with set of projective generators
$\{\Sigma ^n \cala | n \in \zed \}$. 
\item 
For $n \in \zed$, $\Sigma^n : \amod \rightarrow \amod$ is an exact functor
which is an equivalence of
categories, with inverse $\Sigma^{-n} : \amod \rightarrow \amod$. In particular,
$\Sigma^n $ preserves 
projectives.
 \end{enumerate}
\end{prop}

\begin{proof}
For the first point, $\Sigma^n \cala$ is a free $\cala$-module, hence 
projective (in fact, $\Sigma^n \cala$ is the projective cover of $\Sigma^n 
\field$). Moreover, for $M$ an $\cala$-module, 
 $\hom_{\amod} (\Sigma^n \cala, M) \cong M^n$, whence it follows that $\{\Sigma 
^n \cala | n \in \zed \}$ is a set of projective generators. 
 
The second statement is clear.  
\end{proof}

%%%%%%%%%%%%%%%%%%%%%%%%%%%%%%%%%%%%%%%%%%%%%%%%%%%%%%%%%%%%%%%%%%%%%%%%%%%%%%%%
%%%%%%%%%%ù
\subsection{Unstable modules and destabilization}

Whereas the cohomology of a spectrum (object from stable homotopy theory which 
represents a cohomology theory) 
is simply an $\cala$-module, the cohomology of a space has further structure; 
it 
is an algebra (via the cup product) and the underlying
 $\cala$-module is {\em unstable}.

\begin{defn}
 An $\cala$-module $M$ is unstable if $Sq^i x = 0$, $\forall i  > |x|$.
The full subcategory of unstable modules is denoted  $\unst \subset \amod$. 
\end{defn}

\begin{prop}
\label{prop:unst_abelian_tensor}
 The category $\unst$ is an abelian subcategory of $\amod$ and is closed under
the tensor product  $\otimes$ of $\amod$. 
\end{prop}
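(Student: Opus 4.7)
The proof breaks into two independent verifications, both direct from the definitions.

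For the first assertion, I would first note that $\unst$ is by definition a full subcategory of $\amod$, so it suffices to check that $\unst$ is closed under finite direct sums and under taking kernels and cokernels (computed in $\amod$) of morphisms between unstable modules. Closure under finite direct sums is immediate, since the instability condition is checked on homogeneous elements. For a morphism $f : M \to N$ in $\unst$, the kernel $\ker f \subset M$ is a graded submodule, so if $x \in \ker f$ has degree $|x|$, then $Sq^i x = 0$ in $M$ (hence in $\ker f$) for all $i > |x|$; thus $\ker f \in \unst$. For the cokernel, let $\overline{y} \in N/\mathrm{im}\, f$ be the class of a homogeneous $y \in N$ of degree $n$; then for $i > n$, $Sq^i y = 0$ in $N$, so $Sq^i \overline{y} = 0$ in the quotient. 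Hence $\mathrm{coker}\, f \in \unst$ as well.

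For the tensor product statement, let $M, N \in \unst$ and consider a homogeneous element $z \in M \otimes N$ of degree $n$, written as $z = \sum_k x_k \otimes y_k$ with $|x_k| + |y_k| = n$. By the Cartan formula,
\[
Sq^i (x_k \otimes y_k) = \sum_{a + b = i} Sq^a(x_k) \otimes Sq^b(y_k).
\]
If $i > n = |x_k| + |y_k|$, then in every summand one of $a > |x_k|$ or $b > |y_k|$ must hold, so either $Sq^a(x_k) = 0$ or $Sq^b(y_k) = 0$ by instability of $M$ and $N$ respectively. Thus $Sq^i z = 0$ for all $i > n$, as required.

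There is no real obstacle here; the only point deserving care is the cokernel step, where one must notice that the instability condition is manifestly stable under passage to a quotient by a graded submodule, and the Cartan formula computation, which relies on the pigeonhole observation that $a + b > |x_k| + |y_k|$ forces one of the inequalities $a > |x_k|$, $b > |y_k|$.
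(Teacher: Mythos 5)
Your proof is correct and is essentially the standard argument the paper leaves to the reader (its proof is just ``Straightforward''): closure of the instability condition under subobjects, quotients, and finite direct sums gives the abelian subcategory claim, and the pigeonhole observation $a+b>|x|+|y|\Rightarrow a>|x|$ or $b>|y|$ applied to the Cartan formula gives closure under $\otimes$. No issues.
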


\begin{proof}
From the definition of instability, it is clear that a submodule (respectively 
quotient) of an unstable module is unstable. 
This implies that $\unst$ is an abelian subcategory of $\amod$. 

Closure under $\otimes$  is seen as follows. By definition, $ Sq^n (x \otimes y 
) = \sum_{i+j = n} Sq^i (x) \otimes Sq^j (y)$; 
if $n > |x\otimes y |$ and $ i + j=n$, then either $i >|x|$ or $j > |y|$, so 
that the right hand expression is zero, as required.
\end{proof}

\begin{rem}
 The duality functor $(-)^\vee : \amod \op \rightarrow \amod$ does not
preserve 
$\unst$, since the relation $Sq^0 = 1$ implies that an unstable module is
concentrated in degrees $\geq 0$. 
The dual $M^\vee$ of a module $M$ concentrated in degrees $\geq 0$ is
concentrated in degrees $\geq 0$ 
if and only if $M = M^0$; for example, the dual of $\Sigma \field$ is not
unstable.
\end{rem}

\begin{exam}
For $n \in \nat$, the suspension functor $\Sigma^n : \amod \rightarrow \amod$
restricts to an exact functor 
$\Sigma^n : \unst \rightarrow \unst$ (given by $\Sigma^n \field \otimes -$).
This is {\em not} an equivalence of categories if $n >0$.
\end{exam}

For later use, the following definition is recalled, which uses the tensor
product of $\unst$.

\begin{defn}
\label{defn:unstalg}
 An algebra in $\amod$ is a graded algebra such that the structure morphisms are
$\cala$-linear.
An unstable algebra $K$ is an unstable module which is a  {\em commutative}
algebra in $\amod$ (and hence in $\unst$) such that the Cartan condition
holds: $Sq ^{|x|} (x) = x^2$, $\forall x \in K$.
Unstable algebras form a category $\unstalg$, with morphisms the algebra
morphisms which are $\cala$-linear.  Forgetting the algebra structure 
yields a functor $\unstalg \rightarrow \unst$. 
\end{defn}

In a few places the terminology {\em nilpotent}, {\em reduced}, {\em 
nil-closed} will be used; for the convenience of the reader, the definition is 
recalled (see 
\cite{schwartz_book} for further details).

\begin{defn}
\label{defn:nilclosed}
 An unstable module $N$ is nilpotent if the operation $Sq_0$ (where $Sq_0 
(x):=Sq^{|x|} (x)$) acts locally nilpotently.

An unstable module $M$ is 
{\em reduced} if $\hom_{\unst}(N,M)=0$ for any nilpotent module $N$ and {\em 
nil-closed} if, in addition, 
$\ext_\unst ^1 (N,M)=0$ for all nilpotents $N$.
\end{defn}

\begin{rem}
\label{rem:nilclosure}
 If $M$ is reduced, there is a nil-closed unstable module $\overline{M}$ and 
an inclusion 
$M\hookrightarrow \overline{M}$ with nilpotent cokernel. The module 
$\overline{M}$ is unique up to isomorphism
and is called the {\em nil-closure} of $M$.

Such considerations arise when considering the localization  of $\unst$ away 
from the nilpotent unstable modules, namely the study 
of the quotient category $\unst /\nil$ (see \cite[Chapter 5]{schwartz_book}).
\end{rem}

The following gives the archetypal examples of nilpotent unstable modules: 

\begin{exam}
 For $M$ an unstable module, $\Sigma M$ is nilpotent.
\end{exam}

The notion of {\em destabilization} arises naturally through topological
considerations, for example 
when passing from {\em stable} homotopy theory (spectra) to {\em unstable}
homotopy theory (spaces).

\begin{defn}
 Let $D : \amod \rightarrow \unst$ be the {\em left} adjoint to the (exact)
inclusion functor $\unst \hookrightarrow \amod$.
\end{defn}
 
\begin{exer}
\label{exer:destab_explicit}
 For $M$ an $\cala$-module, show that the linear subspace 
\[
 BM:= \langle Sq^i (x) | i > |x| \rangle,
\]
as $x$ ranges over elements of $M$, is a sub $\cala$-module. Deduce that $DM
\cong M / BM$ is an $\cala$-module (unstable, by construction).

Namely, from the explicit construction,  if $f : M \rightarrow N$
is a morphism of $\cala$-modules with $N$ unstable, there is a natural
factorization:
\[
 \xymatrix{
M 
\ar@{->>}[d] 
\ar[r]^f
&
N
\\
M/BM .
\ar@{.>}[ru]
}
\]
\end{exer}

\begin{nota}
For $n \in \zed$, let $F(n)$ denote $D (\Sigma^n \cala)$; this  
 is the {\em free unstable module} on a generator of
degree $n$.
\end{nota}

\begin{prop}
\label{prop:unst_projectives}
The category $\unst$ has enough projectives and 
 $\{ F (n) | n \in \nat\}$ forms a set of projective generators.
\end{prop}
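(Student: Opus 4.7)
The plan is to verify three things in turn: that each $F(n)$ is projective in $\unst$, that every unstable module is a quotient of a direct sum of copies of the $F(n)$'s, and to deduce from these two facts that $\unst$ has enough projectives.

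For projectivity, I would argue via the adjunction defining $D$. The inclusion $\iota : \unst \hookrightarrow \amod$ is exact (since $\unst$ is an abelian subcategory of $\amod$, as established in the preceding proposition), so its left adjoint $D$ preserves projectives; since $\Sigma^n \cala$ is projective in $\amod$, the module $F(n) = D(\Sigma^n \cala)$ is projective in $\unst$. Equivalently, one computes directly: for any $N \in \unst$,
\[
\hom_\unst(F(n), N) \cong \hom_\amod(\Sigma^n \cala, \iota N) \cong N^n,
\]
and the functor $N \mapsto N^n$ is exact on $\unst$.

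For generation and enough projectives, given any $M \in \unst$, the identification $\hom_\unst(F(n), M) \cong M^n$ associates to each homogeneous element $x \in M^n$ a morphism $F(n) \to M$ hitting $x$. Assembling these over all $n \in \nat$ and all $x \in M^n$ yields a natural surjection
\[
\bigoplus_{n \in \nat} \bigoplus_{x \in M^n} F(n) \twoheadrightarrow M.
\]
Direct sums of projectives are projective, so this exhibits $M$ as a quotient of a projective; hence $\{F(n) | n \in \nat\}$ is a set of projective generators and $\unst$ has enough projectives. The only subtle point is that the indexing runs over $\nat$ rather than $\zed$: this is forced because unstability (combined with $Sq^0 = 1$) makes any unstable module concentrated in degrees $\geq 0$, so $M^n = 0$ for $n < 0$. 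Nothing in the argument presents a genuine obstacle; the result is essentially formal, resting on the adjunction, the exactness of $\iota$, and the previously established projective generation of $\amod$ by $\{\Sigma^n \cala\}$.
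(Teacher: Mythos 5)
Your proof is correct and is the standard argument; the paper itself just says ``Straightforward,'' and what you wrote is exactly the expected expansion (projectivity of $F(n)$ via the adjunction $\hom_\unst(F(n),N)\cong N^n$, then the evident evaluation surjection from a direct sum of $F(n)$'s onto any $M$). Your remark on why the index set can be taken to be $\nat$ rather than $\zed$ is also apt; one could equivalently observe that $F(n)=0$ for $n<0$ since $Sq^0=1$ forces the degree-$n$ generator into $B(\Sigma^n\cala)$.
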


\begin{proof}
 By adjunction, for $N$ an unstable module, there are natural isomorphisms
 \[
  \hom_{\unst}(D (\Sigma^n \cala) , N) \cong \hom_{\amod} (\Sigma^n \cala, N) 
\cong N^n,
 \]
which show both that $D (\Sigma^n \cala)$ is projective and that these form a 
set of projective generators (recalling that an unstable 
module is necessarily concentrated in non-negative degrees).
\end{proof}

\begin{prop}
\label{prop:destab}
 The functor $D: \amod \rightarrow \unst$ is right exact (but not exact) and
preserves projectives.
\end{prop}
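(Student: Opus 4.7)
My plan is to treat the three assertions separately: right exactness and preservation of projectives are formal consequences of the adjunction, while the failure of left exactness requires an explicit counterexample. The first two parts are essentially general nonsense; only the third requires any computation.

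For right exactness, I would invoke the general principle that a left adjoint between abelian categories preserves all colimits, hence in particular cokernels and finite coproducts; for an additive functor this is equivalent to being right exact. For the preservation of projectives, the key observation is that the right adjoint, namely the inclusion $\unst \hookrightarrow \amod$, is itself exact. Then for $P$ projective in $\amod$, the adjunction isomorphism
\[
\hom_\unst(DP,-) \cong \hom_\amod(P, -)\big|_\unst
\]
exhibits $\hom_\unst(DP,-)$ as the composite of two exact functors (the inclusion, and $\hom_\amod(P,-)$), hence exact; so $DP$ is projective in $\unst$.

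For the failure of exactness I would produce a short exact sequence whose image under $D$ loses left exactness. The plan is to take the inclusion of the augmentation ideal $\overline{\cala}\hookrightarrow \cala$. Using the description $DM = M/BM$ from the exercise, first observe that $B\cala = \overline{\cala}$: the inclusion $B\cala \subseteq \overline{\cala}$ is immediate on degree grounds, and conversely every positive-degree element of $\cala$ is an $\cala$-multiple of some $Sq^i$ with $i>0=|1|$, hence lies in $B\cala$. Consequently $D\cala = \field$, and the image of $Sq^1$ in $D\cala$ is zero. On the other hand, $Sq^1$ is nonzero in $D\overline{\cala}$: any relation $Sq^1 = Sq^i(x)$ with $x\in\overline{\cala}$ and $i>|x|$ would force $|x|\geq 1$, hence $|Sq^1| = i+|x|\geq 3$, contradicting $|Sq^1|=1$. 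Thus $D(\overline{\cala}\hookrightarrow\cala)$ fails to be injective.

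The only mild obstacle is the degree bookkeeping in the counterexample; the bulk of the statement is purely formal, and I would expect the author's one-word proof to be ``Straightforward''.
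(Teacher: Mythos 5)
Your argument is correct and fills in exactly the details the paper leaves implicit behind its one-word ``Straightforward'': right exactness and preservation of projectives follow formally from $D$ being left adjoint to an exact inclusion, and your counterexample $D(\overline{\cala}\hookrightarrow\cala)$ for the failure of left exactness is valid (indeed, $B\overline{\cala}$ is concentrated in degrees $\geq 3$ while $Sq^1$ lives in degree $1$). One small imprecision: not every element of $\overline{\cala}$ is literally a single $\cala$-multiple $a\cdot Sq^i$; what you need, and what is true, is that $\overline{\cala}$ is the left ideal \emph{generated} by the $Sq^i$ with $i>0$, which suffices since $B\cala$ is an $\cala$-submodule and hence closed under sums.
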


\begin{proof}
The functor $D$ is left adjoint to the exact forgetful functor $\amod 
\rightarrow \unst$, hence preserves projectives and is right exact. 

To see that  $D$ is not exact, consider the $\cala$-module $\mathscr{E}:= 
\Sigma^{-1} \tilde{H}^* (\mathbb{R}P^2)$. The latter lies in the 
non-split short exact sequence in $\amod$:
\[
 0
 \rightarrow 
 \Sigma \field 
 \rightarrow 
 \mathscr{E}
 \rightarrow 
 \field
 \rightarrow
 0
\]
where the classes are linked by $Sq^1$. Clearly $D \mathscr{E} = \field$ and 
$D \Sigma \field = \Sigma \field$, so 
applying $D$ exhibits the non-exactitude. 
\end{proof}

The functor $D$ can be used to define {\em division functors}. The most
important examples considered
here are the (iterated) loop functors.

\begin{defn}
 For $n \in \nat$, let $\Omega^n : \unst \rightarrow \unst$ denote the composite
functor 
$D \Sigma^{-n}$, where  $\Sigma^{-n}: \amod \rightarrow \amod$ is 
restricted to a functor 
 $\unst \rightarrow \amod$. (For $n=1$, $\Omega^1$ is denoted simply $\Omega$.)
\end{defn}

\begin{prop}
\label{prop:Omega_projectives}
 For $n \in \nat$, the functor $\Omega^n : \unst \rightarrow \unst$ is left
adjoint to $\Sigma^n : \unst \rightarrow \unst$; it is  
  right exact (but not exact for $n >0$) and preserves projectives.
\end{prop}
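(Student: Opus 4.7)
The plan is to derive all three assertions from the formal properties already established in the excerpt: the adjunction $D \dashv \iota$ where $\iota: \unst \hookrightarrow \amod$ is the inclusion, the fact that $\Sigma^n : \amod \to \amod$ is an equivalence with inverse $\Sigma^{-n}$, and the fact that $\Sigma^n : \unst \to \unst$ is exact for $n \in \nat$.

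First, to establish the adjunction, I would compose the adjunctions $\Sigma^{-n} \dashv \Sigma^n$ on $\amod$ with $D \dashv \iota$. Explicitly, for $M, N \in \unst$:
\[
\hom_\unst(\Omega^n M, N) = \hom_\unst(D\Sigma^{-n}M, N) \cong \hom_\amod(\Sigma^{-n}M, \iota N) \cong \hom_\amod(M, \Sigma^n \iota N).
\]
The key point is that for $n \in \nat$, $\Sigma^n$ preserves $\unst$ (as noted in the earlier example), so $\Sigma^n \iota N = \iota \Sigma^n N$; together with the fact that $\iota$ is fully faithful, this yields $\hom_\amod(M, \Sigma^n \iota N) = \hom_\unst(M, \Sigma^n N)$, giving the adjunction $\Omega^n \dashv \Sigma^n$ on $\unst$.

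Right exactness and preservation of projectives then follow formally. Being a left adjoint, $\Omega^n$ preserves all small colimits, and in particular is right exact. For projectivity, I would invoke the standard abstract argument: since $\Sigma^n : \unst \to \unst$ is exact, for any projective $P$ and any epimorphism $M \twoheadrightarrow N$ in $\unst$, the natural map $\hom_\unst(\Omega^n P, M) \cong \hom_\unst(P, \Sigma^n M) \twoheadrightarrow \hom_\unst(P, \Sigma^n N) \cong \hom_\unst(\Omega^n P, N)$ is surjective, so $\Omega^n P$ is projective.

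The one step requiring a genuine (rather than formal) argument is the non-exactness for $n > 0$, and this is the main obstacle. I would exhibit an explicit short exact sequence in $\unst$ whose image under $\Omega^n$ fails to be left exact; a standard choice is to take a short exact sequence of the form $0 \to K \to F(m) \to \Sigma^n\field \to 0$ where $F(m) \to \Sigma^n\field$ is the canonical projection for suitable $m \geq n$, and verify by direct calculation (using the description $\Omega^n = D \circ \Sigma^{-n}$ and the instability subspace $B(-)$ from the earlier exercise) that $\Omega^n$ kills too much of $F(m)$ to make the resulting sequence injective on the left. Alternatively, one can defer this to the existence of the left derived functors $\Omega^n_s$ (with $s>0$) constructed later in the text, whose non-triviality demonstrates non-exactness.
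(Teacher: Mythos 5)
Your proof is correct and surely takes the approach the paper intends by ``straightforward'': compose the adjunction $D \dashv \iota$ with the adjoint equivalence $\Sigma^{-n} \dashv \Sigma^n$ on $\amod$, use $\Sigma^n(\unst) \subset \unst$ and full faithfulness of $\iota$ to land back in $\unst$, and then deduce right exactness (left adjoints preserve colimits) and preservation of projectives (right adjoint $\Sigma^n$ is exact) by general nonsense. An equally direct route to the projectivity statement, consistent with the paper's earlier propositions, is to note that $\Omega^n = D \circ \Sigma^{-n}$ is a composite of two functors each already shown to preserve projectives. For the non-exactness, your suggested short exact sequence works once one takes $m=n$: with $0 \to K \to F(n) \to \Sigma^n\field \to 0$ the canonical truncation, $\Omega^n$ sends $F(n) \twoheadrightarrow \Sigma^n\field$ to an isomorphism $\field \to \field$, while $\Omega^n K = D(\Sigma^{-n}K) \neq 0$ (the degree-$1$ class $\Sigma^{-n}Sq^1 u_n$ survives destabilization, there being nothing below it), so the induced map $\Omega^n K \to \Omega^n F(n)$ is forced to be zero and hence not injective.
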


\begin{proof}
It is straightforward to check that $D \Sigma^{-n}$ is left adjoint to 
$\Sigma^n : \unst \rightarrow \unst$. Since the latter is exact, it follows 
that 
$\Omega^n$ is right exact and preserves projectives. Non-exactitude can be seen 
as in Proposition \ref{prop:destab}; for example,  consider applying 
the functor $\Omega$ to $\Sigma \mathscr{E}$ for $n=1$ (noting that 
$\mathscr{E}$ is not unstable). 
\end{proof}

\begin{exer}
 Show that, for $0 < n \in \nat$, $\Omega F(n) \cong F(n-1)$. 
\end{exer}

\begin{prop}
\label{prop:D_iterated_loops}
 For $n \in \nat$ there is a natural equivalence of functors 
\[
 \Omega^n D \cong D \Sigma^{-n} : \amod \rightarrow \unst.
\]
\end{prop}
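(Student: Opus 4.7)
The plan is to deduce the natural equivalence from uniqueness of adjoint functors. Both $\Omega^n D$ and $D \Sigma^{-n}$ are left adjoints (being composites of left adjoints), so to produce a natural isomorphism it suffices to exhibit a natural isomorphism between their right adjoints.

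First I would identify the right adjoint of $\Omega^n D : \amod \rightarrow \unst$. Since $D$ is left adjoint to the inclusion $i : \unst \hookrightarrow \amod$ (by definition), and $\Omega^n : \unst \rightarrow \unst$ is left adjoint to $\Sigma^n : \unst \rightarrow \unst$ (by the preceding proposition), the composite $\Omega^n D$ has right adjoint $i \circ \Sigma^n|_\unst : \unst \rightarrow \amod$. On the other hand, $\Sigma^{-n} : \amod \rightarrow \amod$ is an equivalence with quasi-inverse $\Sigma^n$, so in particular $\Sigma^{-n} \dashv \Sigma^n$ on $\amod$; combining with $D \dashv i$ shows that $D \Sigma^{-n}$ has right adjoint $\Sigma^n \circ i : \unst \rightarrow \amod$, where this $\Sigma^n$ is now the suspension functor on $\amod$.

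The key observation is then that the suspension functor $\Sigma^n : \unst \rightarrow \unst$ was defined in the excerpt as the restriction of $\Sigma^n = \Sigma^n \field \otimes - : \amod \rightarrow \amod$; equivalently, the underlying $\cala$-module of $\Sigma^n N$ for $N \in \unst$ agrees with $\Sigma^n$ applied to $N$ viewed in $\amod$. Hence the two functors $\unst \rightarrow \amod$ just produced, namely $i \circ \Sigma^n|_\unst$ and $\Sigma^n \circ i$, are literally equal. By uniqueness of adjoints, their left adjoints $\Omega^n D$ and $D \Sigma^{-n}$ are therefore naturally isomorphic as functors $\amod \rightarrow \unst$.

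There is no real obstacle here: the only point worth checking carefully is the compatibility $i \Sigma^n_\unst = \Sigma^n_\amod\, i$, which is a matter of unpacking the definition of $\Sigma^n$ on $\unst$ as a restriction. Alternatively, one could argue directly from the explicit formula $D M = M/BM$ of the earlier exercise, noting that for $M \in \amod$ the submodule $B(\Sigma^{-n} M)$ corresponds under the equivalence $\Sigma^n : \amod \xrightarrow{\sim} \amod$ to the submodule of $M$ generated by the relations of the form $Sq^i x = 0$ for $i > |x| - n$, which is precisely the submodule whose quotient computes $\Omega^n(DM) = D\Sigma^{-n}(DM)$; but the adjoint argument above is cleaner and avoids any calculation.
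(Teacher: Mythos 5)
Your proof is correct and uses the standard uniqueness-of-adjoints argument, which is surely what the paper intends by the one-word proof ``Straightforward.'' The crucial step is exactly the one you flag: the functors $i\circ \Sigma^n|_\unst$ and $\Sigma^n|_\amod\circ i$ from $\unst$ to $\amod$ agree on the nose because $\Sigma^n$ on $\unst$ is defined by restricting $\Sigma^n$ on $\amod$; their left adjoints $\Omega^n D$ and $D\Sigma^{-n}$ are therefore naturally isomorphic.

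One small remark worth keeping in mind (not a gap): unwinding the adjunction identification, the natural isomorphism $D\Sigma^{-n}M \xrightarrow{\ \sim\ } \Omega^n D M = D\Sigma^{-n}DM$ is induced by applying $D\Sigma^{-n}$ to the unit $M \twoheadrightarrow DM$; concretely one is verifying that, for $N$ unstable, $\hom_\amod(\Sigma^{-n}M, N) \cong \hom_\amod(M, \Sigma^n N) \cong \hom_\unst(DM, \Sigma^n N) \cong \hom_\amod(\Sigma^{-n}DM, N)$ since $\Sigma^n N$ is again unstable. Your sketched alternative via $DM = M/BM$ also works, but the adjunction route is cleaner and matches the level of the surrounding ``Straightforward'' lemmas.
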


\begin{proof}
For $N$ an unstable module and $M$ an $\cala$-module, there is a chain of
natural adjunction isomorphisms
\begin{eqnarray*}
 \hom_\unst (\Omega^n D M, N) 
 &\cong& 
 \hom _\unst (D M, \Sigma^n N) 
 \cong 
 \hom_\amod (M, \Sigma^n N)
 \\
 &\cong& 
 \hom_\amod (\Sigma^{-n} M,  N)
 \cong 
 \hom_{\unst} (D \Sigma^{-n} M, N), 
\end{eqnarray*}
from which the result follows.
\end{proof}

\begin{exer}
Let $M$ be an unstable module which is of finite type (i.e. $\dim (M^n) $ is
finite $\forall n$). 
Show that the functor $D (- \otimes M^\vee) $ is left adjoint to $M \otimes -
$. 

This left adjoint is usually referred to as the {\em division functor} by $M$  
and written $(-:M)$; see \cite{la} for general considerations
on such functors.
\end{exer}

\begin{exam}
 An important division functor which can be constructed by
using destabilization is Lannes'
$T$-functor $T:= (-: H^* (B \zed/2))$, so that, for $M$ an unstable module,
\[
 T M  = D \Big(M \otimes H^* (B \zed/2)^\vee\Big), 
\]
where $B \zed/2$ is the classifying space of the group $\zed/2$, which has the 
homotopy type of $\mathbb{R}P^\infty$. (For the structure of $H^* (B \zed/2)$, 
see
Example \ref{exam:e1} below.) The dual $H^* (B \zed/2)^\vee$ can be identified 
as the homology $H_* (B\zed/2)$, considered
as a {\em left} $\cala$-module via the conjugation $\chi$.
\end{exam}

\begin{rem}
 Lannes' $T$-functor is an essential tool in modern homotopy theory and is 
defined
for any prime $p$. It has good properties which make it accessible to 
calculation; 
 for example it is exact and commutes with tensor products. Moreover, the 
$T$-functor 
restricts to a functor on the category $\unstalg$ of unstable algebras.
An underlying fundamental algebraic fact is  
 that the cohomology of an elementary abelian $p$-group, $H^* (BV; \field_p)$, 
is injective in $\unst$.

The homotopical importance of $T$ stems from the fact that $T H^* (X)$ is an 
approximation 
to the calculation of $H^* (\mathrm{Map} (B\zed/p , X) )$. Indeed, there is a 
canonical comparison 
map $TH^* (X) \rightarrow H^* (\mathrm{Map} (B\zed/p , X) )$ that is given by 
adjunction 
from the morphism induced in cohomology by the evaluation map
\[
 \mathrm{Map} (B\zed/p , X) \times B \zed/p \rightarrow X.
\]
Under adequate hypotheses upon $X$, the comparison map  is an isomorphism.

The reader should consult \cite{schwartz_book} for this and much more, in 
particular the 
application of $T$-functor technology to Sullivan's fixed point set conjecture 
(see also \cite{la}).
\end{rem}

%%%%%%%%%%%%%%%%%%%%%%%%%%%%%%%%%%%%%%%%%%%%%%%%%%%%%%%%%%%%%%%%%%%%%%%%%%%%%%%%
%%%%%%
\subsection{Derived functors}

The abelian categories $\amod$ and $\unst$ both have enough projectives (by 
Propositions \ref{prop:amod_projectives} and \ref{prop:unst_projectives}), 
hence 
one can
do homological algebra 
in them. Recall that a projective resolution $P_\bullet$ of an object $M$ of an
abelian category is a complex 
of projectives 
\[
 \ldots \rightarrow 
P_s \rightarrow P_{s-1} \rightarrow \ldots \rightarrow P_1 \rightarrow P_0,
\]
with $P_s$ in homological degree $s$, and which has homology concentrated in
degree zero with  $H_0(P_\bullet)\cong M$. 
This will frequently be denoted by $P_\bullet \rightarrow M$, where the arrow
corresponds to the 
surjection $P_0 \twoheadrightarrow M$. 

\begin{rem}
If $M$ is an unstable module, there are two possible notions of projective
resolution: 
a projective resolution in $\unst$, $P_\bullet \rightarrow M$ (that is, by 
projectives in
$\unst$),  or a  resolution 
in $\amod$,  $F_\bullet \rightarrow M$, by free $\cala$-modules. 
\end{rem}

\begin{defn}
 For $s,n\in \nat$, let 
\begin{enumerate}
 \item 
$D_s : \amod \rightarrow \unst$ denote the $s$th left derived functor of $D :
\amod \rightarrow \unst$;
\item
$\Omega^n _s : \unst \rightarrow \unst$ denote the $s$th left derived functor of
$\Omega^n : \unst \rightarrow \unst$. 
\end{enumerate}
\end{defn}

Explicitly, if $F_\bullet \rightarrow M$ is a free resolution in $\amod$
of an $\cala$-module 
$M$, then $D_s M $ is the $s$th homology of the complex $D F_\bullet$. Note that
$DF_\bullet$ is a complex 
with each object $DF_s$ projective in $\unst$, by Proposition \ref{prop:destab}; 
it is a projective resolution
of $DM$ if and only if all the higher
derived functors $D_s M$ vanish.  

\begin{exer}
 Let $M$ be an $\cala$-module and suppose that there exists $t\in \nat$ such
that $\Sigma^t M$ is unstable 
(such a $t$ does not exist in general - see Remark \ref{rem:never_unstable} 
below). Show that, for all $s \in
\nat$, there exists
a natural morphism 
 $
 D_s M \rightarrow \Omega^t _s \Sigma^t M.
$ 
This exhibits the close relationship between derived functors of destabilization
and of iterated loop functors.
\end{exer}

\begin{exam}
\label{exam:e1}
 Derived functors of destabilization are highly non-trivial. For example we
consider 
a lower bound for $D_1 (\Sigma^{-1} \field)$ as follows.

Recall that $H^* (B\zed/2) \cong \field [u]$, where $|u|=1$; this is an unstable
algebra, and this fact  determines
its structure as an $\cala$-module. (Explicitly, the total Steenrod power 
$Sq^{\mathrm{T}}=\sum_{i \in \nat}Sq^i$ 
on $u$
is 
$Sq^{\mathrm{T}} (u) = u (1+u)$, via the Cartan condition and instability and 
this
determines the structure 
via the Cartan formula for cup products, which implies that $Sq^{\mathrm{T}}$ 
is 
multiplicative.)

One can form the localization $\field[u^{\pm 1}]$, so that $u^{-1}$ is a class
of degree $-1$. 
This has an $\cala$-algebra structure (not unstable!), which is determined 
by
the total Steenrod power 
of $u^{-1}$. This can be calculated by using the multiplicativity of 
$Sq^{\mathrm{T}}$:
\[
 1  = Sq^{\mathrm{T}}(1) = Sq^{\mathrm{T}}(u^{-1}u ) = Sq^{\mathrm{T}} (u^{-1}) 
Sq^{\mathrm{T}} (u),
\]
 giving $Sq^{\mathrm{T}} (u^{-1}) = \frac{u^{-1}} {1+u}$, which translates as 
\[
 Sq^{n+1} (u^{-1}) = u^n 
\]
for all $n \in \nat$. 

Let $\hat{P}$ denote the sub $\field[u]$-module of $\field [u^{\pm 1}]$
generated by $u^{-1}$, so that 
there is a (non-split) short exact sequence of $\cala$-modules:
\[
 0 \rightarrow 
\field [u] \rightarrow \hat{P} \rightarrow \Sigma^{-1} \field \rightarrow 0.
\]
It is straightforward to see that $D \hat{P} = 0$ and $D \Sigma^{-1} \field=0$. 
Hence the long exact sequence for 
derived functors 
\[
 \ldots \rightarrow D_1 \hat{P} \rightarrow D_1 (\Sigma^{-1} \field) \rightarrow
D \field[u] 
\rightarrow D \hat {P} \rightarrow D \Sigma^{-1}\field\rightarrow 0
\]
shows that $D _1 (\Sigma^{-1} \field) \twoheadrightarrow  D\field [u] \cong
\field [u]$ is surjective. 
(It is in fact an isomorphism, by Corollary \ref{cor:deriv_destab_LZ}.) Thus 
$D_1 (\Sigma^{-1}
\field)$ is {\em infinite}, even 
though $\Sigma^{-1}\field$ has total dimension one.
\end{exam}

\begin{rem}
\label{rem:never_unstable}
The $\cala$-module $\hat{P}$ is bounded below; however $\Sigma^{t}
\hat{P}$ is {\em never} unstable, since all the $Sq^i $ act non-trivially upon 
the lowest dimensional class.
\end{rem}

%%%%%%%%%%%%%%%%%%%%%%%%%%%%%%%%%%%%%%%%%%%%%%%%%%%%%%%%%%%%%%%%%%%%%%%%%%%
\subsection{Motivation for studying derived functors of destabilization and of
iterated loop functors}
\label{subsect:motiv}

The functors $D_s : \amod \rightarrow \unst$ arise naturally in algebraic
topology. 

\begin{exam}
 There is a Grothendieck spectral sequence  calculating $\ext _\cala^* (M,
N)$ in terms of $\ext_\unst$ when
$N$ is an unstable module. This is the spectral sequence derived from 
considering $\hom_\cala (-,N)$ as the composite functor 
$\hom_\unst (D(-), N)$.

The spectral sequence has the form 
\[
 \ext ^p_\unst (D_q M, N) 
\Rightarrow 
\ext^{p+q} _\cala (M, N). 
\]
When $N$ is injective in $\unst$ (for example $N= \field$ or $N= H^* (BV)$, for 
$V$ an elementary abelian $2$-group), the spectral
sequence degenerates to the isomorphism
\[
\ext_\cala^q (M, N)
\cong 
 \hom _\unst (D_q M, N ) 
 .
\]
Such $\ext$ groups are important for calculating the $E^2$-term of the Adams
spectral sequence. This motivated Lannes and Zarati's work on the derived 
functors of destabilization
\cite{lannes_zarati_deriv_destab} and is intimately related to an approach to 
the Segal conjecture for elementary abelian $p$-groups.
\end{exam}

\begin{exam}
 Derived functors of destabilization occur in studying the relationship between
the cohomology of a spectrum $E$ and the cohomology
 of the infinite loop space $\Omega^\infty E$ associated to $E$. 

Recall that there is an adjunction counit $\Sigma^\infty \Omega^\infty E
\rightarrow E$, where $\Sigma^\infty$ is 
the suspension spectrum functor. This gives rise to a commutative diagram in
$\amod$:
\[
 \xymatrix{
H^* (E) 
\ar[rr]
\ar[rd]
&
&
H^* (\Sigma^\infty \Omega^\infty E) 
\cong 
H^* (\Omega^\infty E)
\\
&
D H^* (E)
,
\ar@{.>}[ur]
}
\]
where the factorization exists 
since $H^* (\Omega^\infty E)$ is an unstable algebra and, in particular, an
unstable module. 

Recall from Definition \ref{defn:unstalg} that $\unstalg$ denotes the category 
of unstable algebras and that the
Steenrod-Epstein enveloping 
algebra functor $U : \unst \rightarrow \unstalg$ is left adjoint to the
forgetful functor $\unstalg \rightarrow \unst$. This is given explicitly by 
\[
 UM := S^* (M)/ \langle Sq^{|m|} m = m^2 \rangle,
\]
the quotient of the free commutative algebra on $M$ given by imposing the 
Cartan condition.

Hence, the above induces a morphism of unstable algebras:
\[
 U (D H^*(E)) \rightarrow H^* (\Omega^\infty E). 
\]

When $E=\Sigma^n  H\field_2$ is a suspension of  the mod $2$ Eilenberg-MacLane
spectrum, this is an isomorphism. However, in general it is far from being an
isomorphism (examples can be given by considering suspension spectra $E= 
\Sigma^\infty X$). 
 Haugseng and Miller \cite{haugseng_miller} have constructed a spectral 
sequence which, 
 in favourable circumstances, calculates $H^* (\Omega^\infty E)$ from $H^* (E)$, 
 
based on 
 a cosimplicial Adams resolution of $E$ constructed from Eilenberg-MacLane 
spectra. The $E_2$-term is expressed
  in terms of non-abelian derived functors of $U D$. In particular, they show 
how the derived functors of destabilization 
  intervene.

Kuhn and McCarty \cite{kuhn_mccarty} construct a spectral sequence to calculate 
$H^* (\Omega^\infty E)$ using  the Goodwillie-Arone tower associated to the 
functor 
$E\mapsto \Sigma^\infty \Omega^\infty E$. In addition, they exhibited an {\em
algebraic approximation} to $H^* (\Omega^\infty E)$,  which is expressed in 
terms of the derived functors
of destabilization. This generalizes earlier work of Lannes and Zarati 
\cite{lannes_zarati_hopf} for
suspension spectra.  
\end{exam}

\begin{exam}
 Similar considerations arise in giving an algebraic approximation to  $H^*
(\Omega^n X)$
 in terms of  $H^* (X)$, when $X$ is a pointed space. As a first approximation,
one
shows that there 
is a natural morphism of unstable algebras 
\[
 U (\Omega^n Q H^* (X)) \rightarrow H^* (\Omega^n X),
\]
where $Q : \unstalg _a \rightarrow \unst$ is the `indecomposables' functor, 
defined
on the category $\unstalg _a$ 
of augmented unstable algebras by $QK:= \overline{K}/\overline{K}^2$, where
$\overline{K}$ is the augmentation ideal. (The base point of $X$ induces 
the augmentation of $H^*(X)$.)  This can be shown to be an isomorphism for
Eilenberg-MacLane spaces but, in general, is far from 
being an isomorphism. 

Under suitable  hypotheses on the space $X$, in particular supposing that
the cohomology of 
$X$ is of the form $UM$ for some unstable module $M$, Harper and Miller
\cite{harper_miller} gave an 
algebraic approximation to $H^* (\Omega^n X)$, which is expressed in terms of
the derived functors
of certain iterated loop functors. It is expected that their results can be
generalized.
\end{exam}

\begin{rem}
 Note that the topological based loop space functor $\Omega$ is {\em right}
adjoint to the reduced suspension functor 
$\Sigma$ and the infinite loop space functor $\Omega^\infty$ is {\em right}
adjoint to the suspension spectrum 
functor $\Sigma^\infty$ (at the level of homotopy categories). The suspension
functor $\Sigma$ commutes with cohomology; however,  since $H^*(-)$ is
contravariant, the algebraic 
approximations to these functors are {\em left} adjoints.  
\end{rem}

\section{First results on derived functors of destabilization and of iterated 
loops}

In this section, elementary results on the derived functors of $D$ and
$\Omega^n$ are considered, 
as a warm-up to constructing the chain complexes which compute the respective
derived functors.
%%%%%%%%%%%%%%%%%%%%%%%%%%%%%%%%%%%%%%%%%%%%%%%%%%%%%%%%%%%%%%%
\subsection{Derived functors of $\Omega$}

There is a simple chain complex which calculates the derived functors of
$\Omega$; to define it requires
the introduction of the doubling (or Frobenius) functor $\Phi$. 

\begin{defn}
\label{defn:Phi_lambda}
 Let $\Phi : \amod \rightarrow \amod$ be the functor which associates to $M$ the
module $\Phi M $ concentrated
in even degrees with $(\Phi M)^{2k} = M^k$ and action of the Steenrod
algebra determined by 
$Sq^{2i} (\Phi x) = \Phi (Sq^i x)$. 

Let $\lambda _M : \Phi M \rightarrow M $ be the natural morphism (of graded
vector spaces) 
defined by $\lambda_M (\Phi x) = Sq_0 (x) := Sq^{|x|} (x)$. 
\end{defn}

\begin{rem}
The functor $\Phi$ and the linear transformation $\lambda$ are defined for {\em 
all} $\cala$-modules.
\end{rem}

\begin{prop}
If $M$ is an unstable module, $\lambda_M  : \Phi M \rightarrow M$ is
$\cala$-linear; hence $\lambda$ induces 
a natural transformation 
$
 \lambda : \Phi |_\unst \rightarrow 1_\unst.
$
\end{prop}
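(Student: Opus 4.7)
The plan is to verify $\cala$-linearity of $\lambda_M$ directly by checking, for every $k \in \nat$ and every homogeneous $x \in M$ of degree $n$, the identity
\[
\lambda_M(Sq^k \Phi x) \;=\; Sq^k \lambda_M(\Phi x).
\]
Since $\Phi M$ is concentrated in even degrees with $Sq^{2i}\Phi x = \Phi(Sq^i x)$ and $Sq^{2i+1}\Phi x = 0$, the verification splits naturally according to the parity of $k$. The only ingredients required are the Adem relations and the instability condition $Sq^j y = 0$ for $j > |y|$; naturality in $M$ will be automatic once the identity is established, since $\Phi$, $\lambda_M$ and the Steenrod action are all functorial.

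For $k = 2i$ even, the left hand side equals $\lambda_M(\Phi Sq^i x) = Sq^{n+i}Sq^i x$, and the right hand side is $Sq^{2i}Sq^n x$. If $i > n$ both sides vanish by instability (applied to $x$ on one side and to $Sq^n x \in M^{2n}$ on the other); if $i = n$ the two sides literally coincide. For $i < n$, Adem gives
\[
Sq^{2i}Sq^n \;=\; \sum_j \binom{n - j - 1}{2i - 2j}\, Sq^{n + 2i - j}Sq^j,
\]
and the binomial coefficient is non-zero only for $j \leq i$, whereas instability applied to $Sq^j x \in M^{n+j}$ kills every term with $n + 2i - j > n + j$, i.e.\ $j < i$. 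Only the index $j = i$ survives, contributing exactly $Sq^{n+i}Sq^i x$.

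For $k = 2i + 1$ odd, the left hand side is $0$, so the task is to show $Sq^{2i+1}Sq^n x = 0$. If $2i+1 > 2n$ this is immediate from instability. Otherwise Adem expands $Sq^{2i+1}Sq^n$ as a sum of terms $\binom{n-j-1}{2i+1-2j}\,Sq^{n+2i+1-j}Sq^j$; the binomial forces $j \leq i$, while instability applied to $Sq^j x$ requires $n + 2i + 1 - j \leq n + j$, i.e.\ $j \geq i + 1$. These two constraints are incompatible, so every summand vanishes.

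The only real ``obstacle'' is the bookkeeping in the Adem expansion: in both parities the argument hinges on observing that the support of the binomial coefficient and the unstable range of the index $j$ either intersect in a single point (even case, forcing the diagonal term $Sq^{n+i}Sq^i x$) or are disjoint (odd case, forcing total vanishing). Once these two collapses are recorded, $\cala$-linearity of $\lambda_M$ follows and the natural transformation $\lambda : \Phi|_\unst \to 1_\unst$ is constructed.
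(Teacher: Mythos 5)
Your verification is correct and is precisely the ``straightforward'' direct computation the paper leaves to the reader: split on the parity of $k$, apply the Adem relation $Sq^k Sq^n$ when $k < 2n$, and observe that instability (applied to $Sq^j x \in M^{n+j}$) kills all but the diagonal term $j=i$ in the even case and every term in the odd case, while the boundary cases $k \geq 2n$ are handled by instability alone. The bookkeeping — binomial support forcing $j \leq i$, unstable range forcing $j \geq i$ (resp.\ $j \geq i+1$) — is exactly right, so no further comment is needed.
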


\begin{proof}
This is an important exercise in using the Adem relation for $Sq^a Sq^{|x|} 
(x)$ and the instability condition. By the latter, one may reduce to the case 
 $a \leq 2 |x|$; it remains to show that the right hand side is zero for 
$a$ odd whereas, for $a = 2j \leq 2|x|$, 
\[
 Sq^{2j} Sq_0 (x) = Sq_0 Sq^j (x).
\]
The reader should provide the details for themselves.
\end{proof}

\begin{rem}
 For $M$ an unstable module, $\lambda_M$ is injective if and only if $M$ is a
{\em reduced} unstable module. 
\end{rem}

\begin{prop}
The functor $\Phi : \amod \rightarrow \amod$ satisfies the following
properties: 
\begin{enumerate}
 \item 
$\Phi$ is exact;
\item
$\Phi$ commutes with tensor products,  in particular $\Phi (\Sigma
M) \cong \Sigma^2\Phi M$.  
\end{enumerate}
\end{prop}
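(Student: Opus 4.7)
The plan is to verify each claim by a direct degree-wise argument, exploiting the explicit formulas defining $\Phi$ and the formula $Sq^{2i}(\Phi x) = \Phi(Sq^i x)$, $Sq^{2i+1}(\Phi x) = 0$.

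For exactness, I would first observe that, as a functor on graded vector spaces, $\Phi$ simply spreads each $M^k$ out to sit in degree $2k$ (inserting zero in odd degrees); this doubling operation is manifestly exact. A sequence of $\cala$-modules is exact if and only if the underlying sequence of graded vector spaces is, and a morphism $f : M \rightarrow N$ in $\amod$ induces $\Phi f$ with $\Phi f (\Phi x) = \Phi(f(x))$; hence exactness of $\Phi : \amod \rightarrow \amod$ is inherited from exactness of the underlying vector-space functor.

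For compatibility with tensor products, the main step is to construct a natural isomorphism $\Phi(M\otimes N) \cong \Phi M \otimes \Phi N$. As graded vector spaces this is immediate: both sides vanish in odd total degree (in the right-hand side because each factor is concentrated in even degrees), and in degree $2k$ both reduce, via the Künneth decomposition, to $\bigoplus_{i+j=k} M^i \otimes N^j$. To verify $\cala$-linearity of the evident map $\Phi(x \otimes y) \mapsto \Phi x \otimes \Phi y$, I would apply the Cartan formula on the right: since $Sq^{\text{odd}}$ annihilates elements in the image of $\Phi$, only terms with both indices even survive in $\sum_{i+j=n} Sq^i(\Phi x) \otimes Sq^j(\Phi y)$, so that
\[
Sq^{2m}(\Phi x \otimes \Phi y) = \sum_{i+j=m} \Phi(Sq^i x) \otimes \Phi(Sq^j y) = \Phi\bigl(Sq^m(x\otimes y)\bigr),
\]
and $Sq^{2m+1}(\Phi x \otimes \Phi y) = 0$. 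This is exactly the $\cala$-action on $\Phi(M\otimes N)$. The special case $\Phi(\Sigma M) \cong \Sigma^2 \Phi M$ then drops out by writing $\Sigma M = \Sigma \field \otimes M$ and noting that $\Phi(\Sigma \field)$ is one-dimensional in degree $2$ with trivial Steenrod action, i.e.\ $\Sigma^2 \field$.

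There is no real obstacle here; both statements amount to unpacking the definitions. The only point that needs a little care is tracking the parity constraint $Sq^{2i+1}(\Phi z) = 0$ when running the Cartan formula in (2), since it is precisely the vanishing of the odd squares on $\Phi$-classes that cuts the Cartan sum down to the shape of the Cartan formula for $M \otimes N$.
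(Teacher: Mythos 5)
Your proof is correct, and since the paper simply marks this as ``Straightforward'' it is presumably the same direct verification intended. Two small remarks. First, when you invoke $Sq^{2i+1}(\Phi z)=0$, it is worth noting that this is automatic from the definition: $\Phi M$ is concentrated in even degrees, so $Sq^{2i+1}(\Phi z)$ would lie in odd degree and must vanish; there is nothing to impose. Second, for the exactness argument the key fact being used is that the forgetful functor $\amod \rightarrow (\text{graded }\field\text{-vector spaces})$ is exact and faithful (kernels and cokernels in $\amod$ are computed on underlying vector spaces), so $\Phi$ being exact on underlying graded vector spaces does indeed lift; you state this correctly, just making the justification explicit. The reduction of the Cartan sum to the even-index terms and the identification $\Phi(\Sigma\field)\cong\Sigma^2\field$ are exactly right.
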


\begin{proof}
This follows directly from the definitions. 
\end{proof}

\begin{rem}
In odd characteristic $p$, the corresponding functor $\Phi$ does {\em not} 
commute with tensor
products; behaviour of $\Phi \Sigma$ is complicated, whereas $\Phi \Sigma^2 
\cong \Sigma^{2p} \Phi$.
\end{rem}

\begin{exer}
\label{exer:Phi_unstalg}
For $K$ an unstable algebra, show that  $\lambda_K : \Phi K
\rightarrow K$ is a morphism of unstable algebras. 
If $K$ is reduced (equivalently has no nilpotent elements as a commutative
algebra),  show that 
$\Phi K $ identifies with the subalgebra of $K$ generated by the squares of
elements of $K$. For example $\Phi \field[u] \cong \field[u^2] \subset
\field[u]$.
\end{exer}

\begin{prop}
\label{prop:ker_coker_lambda}
 For an unstable module $M$, the higher derived functors $\Omega_s$, $s >1$ of
$\Omega$ are
trivial, (i.e. $\Omega_s =0$ $ \forall s >1$) and there is a natural exact
sequence in
$\unst$
\[
 0 \rightarrow \Sigma \Omega_1 M \rightarrow \Phi M 
\stackrel{\lambda_M} {\rightarrow} M \rightarrow \Sigma \Omega M \rightarrow 0.
\]
In particular, the complex in $\amod$:
\[
 \Sigma^{-1}\Phi M 
\stackrel{\Sigma^{-1}\lambda_M} {\longrightarrow}
\Sigma^{-1} M 
\]
has homology $\Omega M$ and $\Omega_1 M$ in homological degrees $0, 1$
respectively.
\end{prop}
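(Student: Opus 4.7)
The strategy is to extract the desired four-term exact sequence from a projective resolution of $M$ in $\unst$. Let $P_\bullet \twoheadrightarrow M$ be such a resolution, with each $P_s$ a direct sum of free unstable modules $F(n)$.

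The first task is to establish, for each $P = F(n)$, a natural short exact sequence
\[
0 \to \Phi P \xrightarrow{\lambda_P} P \to \Sigma \Omega P \to 0
\]
in $\unst$. The identification of the cokernel is a special case of an observation valid for every unstable $N$: the exercise characterising $D$ via $DN = N/BN$, combined with the instability of $N$, shows that $B(\Sigma^{-1} N)$ consists precisely of the elements $Sq^{|x|} x = \lambda_N(\Phi x)$ (all other $Sq^i$ with $i > |x|$ vanish on $N$), whence $\Omega N = D(\Sigma^{-1} N) = \Sigma^{-1} \mathrm{coker}(\lambda_N)$; equivalently, $\mathrm{coker}(\lambda_N) \cong \Sigma \Omega N$ in $\unst$. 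Injectivity of $\lambda_{F(n)}$ is the standard fact that free unstable modules are reduced (see \cite{schwartz_book}), in view of the remark that $\lambda_N$ is injective precisely when $N$ is reduced. Since $\Phi$, $\Sigma$ and $\Omega$ all commute with arbitrary direct sums, the sequence extends to each $P_s$.

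Assembling the termwise sequences yields a short exact sequence of complexes $0 \to \Phi P_\bullet \to P_\bullet \to \Sigma \Omega P_\bullet \to 0$ in $\unst$. Exactness of $\Phi$ gives $H_s(\Phi P_\bullet) \cong \Phi H_s(P_\bullet)$, which equals $\Phi M$ in degree zero and vanishes otherwise; exactness of $\Sigma$ and the definition of left derived functors give $H_s(\Sigma \Omega P_\bullet) \cong \Sigma \Omega_s M$. The long exact sequence in homology then forces $\Sigma \Omega_s M = 0$ for $s \geq 2$, hence $\Omega_s M = 0$ since $\Sigma$ is faithful on $\unst$, and terminates with
\[
0 \to \Sigma \Omega_1 M \to \Phi M \xrightarrow{\lambda_M} M \to \Sigma \Omega M \to 0,
\]
the middle map being identified with $\lambda_M$ by the naturality of $\lambda$. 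The ``in particular'' clause follows by applying the exact functor $\Sigma^{-1} : \amod \to \amod$ to this four-term exact sequence and reading off the kernel and cokernel of $\Sigma^{-1} \lambda_M$ as $\Omega_1 M$ and $\Omega M$ respectively.

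The main obstacle in this plan is the injectivity of $\lambda_{F(n)}$, which rests on the non-trivial but standard reducedness of the free unstable modules; once this is granted, the remainder is a formal diagram chase using the exactness of $\Phi$ and $\Sigma$.
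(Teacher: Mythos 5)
Your argument is correct and follows essentially the same route as the paper: identify $\Sigma\Omega N$ with $\operatorname{coker}(\lambda_N)$ via $DN = N/BN$ and instability, use reducedness of free unstable modules to get injectivity of $\lambda$ on projectives, assemble the termwise short exact sequences over a projective resolution into a short exact sequence of complexes, and extract the result from the long exact sequence in homology using exactness of $\Phi$ and $\Sigma$. The only cosmetic difference is that you make explicit the reduction to $P = F(n)$ and the commutation with direct sums, which the paper leaves implicit.
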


\begin{proof}
 By definition, $\Omega M$ is the destabilization of $\Sigma^{-1}M$. Hence
(using the fact that $M$ is 
unstable), 
\[
 \Sigma \Omega M \cong M / \langle Sq^{|x|} x \rangle,
\]
which is precisely the cokernel of $\lambda_M$.

It is a standard fact that the free unstable modules $F(n)$ are reduced (for 
example this can be seen 
since $F(n)$ is a submodule of $H^* (BV_n)$, where $V_n$ is a rank $n$ 
elementary abelian $2$-group), hence 
$\lambda_P$ is a monomorphism 
if $P$ is a projective unstable module.  

Consider a projective resolution $P_\bullet \rightarrow M$ in $\unst$. By the
above property, the natural
transformation $\lambda$ gives rise to a short exact sequence of complexes:
\[
 0 
\rightarrow 
\Phi P_\bullet
\rightarrow 
P_\bullet
\rightarrow 
\Sigma \Omega P_\bullet
\rightarrow 
0.
\]
The functors $\Phi$ and $\Sigma$ are exact, hence the homology of $P_\bullet$
and $\Phi P_\bullet$ is concentrated in homological degree zero,
where it is respectively $M$ and $\Phi M$, whereas the homology of $\Sigma
\Omega P_\bullet$ is isomorphic
to $\Sigma \Omega_s M$ in homological degree $s$, by construction of the derived
functors. 

The long exact sequence in homology in low degrees gives the exact sequence
\[
 0 = H_1 (\Phi P_\bullet) 
\rightarrow 
H_1 (\Sigma \Omega P_\bullet) 
\rightarrow 
H_0 (\Phi P_\bullet) = \Phi M 
\stackrel{\lambda_M}{\rightarrow}
M= H_0 (P_\bullet) 
\rightarrow 
H_0(\Sigma \Omega P_\bullet) , 
\]
which shows that the kernel of $\lambda_M$ is isomorphic to $\Sigma \Omega_1 M$,
as required. 

In higher homological degree, it follows immediately that $\Omega_ s M
$ = 0 for $s>1$. The final 
statement is clear.
\end{proof}

\begin{cor}
\label{cor:omega1_reduced}
For $p=2$, an unstable module $M$ is reduced if and only if $\Omega_1 M=0$.
\end{cor}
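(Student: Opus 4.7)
The plan is to read the corollary directly off Proposition \ref{prop:ker_coker_lambda}, which provides the four-term natural exact sequence
\[
 0 \rightarrow \Sigma \Omega_1 M \rightarrow \Phi M
\stackrel{\lambda_M}{\rightarrow} M \rightarrow \Sigma \Omega M \rightarrow 0.
\]
From exactness at $\Phi M$, the kernel of $\lambda_M$ is naturally isomorphic to $\Sigma \Omega_1 M$. Since $\Sigma : \amod \rightarrow \amod$ is an equivalence of categories (hence faithful), one has $\Omega_1 M = 0$ if and only if $\Sigma \Omega_1 M = 0$, if and only if $\lambda_M$ is a monomorphism.

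It then suffices to invoke the Remark stated immediately after the definition of $\lambda$: for an unstable module $M$, the map $\lambda_M$ is injective precisely when $M$ is reduced. Combining the two equivalences gives the claim.

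The only point that is not purely formal is the remark characterizing reducedness in terms of $\lambda_M$. Since that statement is used as a black box here, the main (essentially only) obstacle is confirming its applicability: concretely, one should recall that the image of $\lambda_M$ is $Sq_0 M$, so the kernel of $\lambda_M$ detects classes $\Phi x$ with $Sq^{|x|} x = 0$, and such a kernel is a nilpotent submodule whose preimage structure controls the nilpotent part of $M$. Granting the remark, the corollary is immediate and requires no further computation.
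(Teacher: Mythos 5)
Your proof is correct and is exactly what the paper's ``Straightforward'' indicates: read off from the four-term exact sequence of Proposition \ref{prop:ker_coker_lambda} that $\Sigma \Omega_1 M = \ker \lambda_M$, then invoke the remark that $\lambda_M$ is injective precisely when $M$ is reduced. No further comment is needed.
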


\begin{proof}
By definition, $M$ is reduced if and only if $\lambda_M$ is injective, hence if 
and only if $\Sigma \Omega_1 M =0$. 
The latter condition is equivalent to $\Omega_1 M=0$, as required.
\end{proof}

\begin{exer}
\ 
\begin{enumerate}
\item
Give an example of a nilpotent unstable module $N$ such that $\Omega N$ is
reduced.
\item 
Show that $\Omega_1 N$  is  nilpotent  if $N$ is 
a nilpotent unstable module. (Hint: reduce to the case that $N$ is a 
finitely-generated unstable module and 
hence to the case that $N$ has a finite filtration with quotients that are 
suspensions of unstable modules. 
 By induction on the length of the filtration, hence reduce to the case of a 
suspension.)
\item 
Zarati \cite{zarati} showed that an unstable module $M$ (over $\field_2$) is
nil-closed if and only if $M$ and $\Omega M$ are both reduced.  Prove this. 

Hints: 
\begin{enumerate}
\item 
To show that $\Omega M$ of a nil-closed unstable module is reduced, 
 show that it suffices to consider the nilclosed injective unstable modules and 
 
hence reduce to considering the case $M= H^* (BV)$, $V$ an elementary abelian 
$2$-group. 
The rank one case is straightforward; use this together with the behaviour of 
$\Omega$ on tensor products
 to treat the general case by induction.
\item 
If $M$ is reduced, there is a short exact sequence associated to nil-closure 
(see Remark \ref{rem:nilclosure}): 
\[
 0
 \rightarrow 
 M
 \rightarrow 
 \overline{M}
 \rightarrow 
 Q 
 \rightarrow 
 0,
\]
with $Q$ a nilpotent unstable module and $\overline{M}$  nil-closed. 
Now use the exact sequence of derived functors of $\Omega$ together with the 
fact that $\Omega_1 Q$ is nilpotent.
\end{enumerate}
\end{enumerate}
\end{exer}

%%%%%%%%%%%%%%%%%%%%%%%%%%%%%%%%%%%%%%%%%%%%%%%%%%%%%%%%%%%%%%%%%%%%%%%%%%%%%
\subsection{Applications of $\Omega$ and $\Omega_1$}

\begin{prop}
\label{prop:grothendieck_ses}
 For $C_\bullet$ a chain complex of reduced unstable modules,  $\Omega 
C_\bullet$ 
has homology which fits into a natural short exact sequence:
\[
 0 \rightarrow 
\Omega H_s (C_\bullet) 
\rightarrow 
H_s (\Omega C_\bullet) 
\rightarrow 
\Omega_1 H_{s-1}(C_\bullet) 
\rightarrow 
0.
\]
\end{prop}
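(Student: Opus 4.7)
The plan is to feed the natural transformation $\lambda$ into the short-exact-sequence-of-complexes machine. The hypothesis that each $C_n$ is reduced is exactly what is needed to make $\lambda_{C_n} : \Phi C_n \to C_n$ injective (by the remark following Proposition \ref{prop:ker_coker_lambda}, reformulated via Corollary \ref{cor:omega1_reduced}). Combined with the identification of the cokernel of $\lambda_M$ with $\Sigma \Omega M$ from Proposition \ref{prop:ker_coker_lambda}, this produces a short exact sequence of chain complexes
\[
0 \to \Phi C_\bullet \stackrel{\lambda}{\to} C_\bullet \to \Sigma \Omega C_\bullet \to 0.
\]

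The next step is to take the associated long exact sequence in homology. Since $\Phi$ and $\Sigma$ are exact functors, they commute with homology: $H_s(\Phi C_\bullet) \cong \Phi H_s(C_\bullet)$ and $H_s(\Sigma \Omega C_\bullet) \cong \Sigma H_s(\Omega C_\bullet)$. A standard check shows the connecting homomorphism is induced by $\lambda$ on the homology modules, so the LES reads
\[
\cdots \to \Phi H_s(C_\bullet) \stackrel{\lambda}{\to} H_s(C_\bullet) \to \Sigma H_s(\Omega C_\bullet) \to \Phi H_{s-1}(C_\bullet) \stackrel{\lambda}{\to} H_{s-1}(C_\bullet) \to \cdots
\]

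Now I apply Proposition \ref{prop:ker_coker_lambda} termwise to each $H_s(C_\bullet)$: the cokernel of $\lambda_{H_s}$ is $\Sigma \Omega H_s(C_\bullet)$ and the kernel of $\lambda_{H_{s-1}}$ is $\Sigma \Omega_1 H_{s-1}(C_\bullet)$. Splicing the LES at these two places yields the natural short exact sequence
\[
0 \to \Sigma \Omega H_s(C_\bullet) \to \Sigma H_s(\Omega C_\bullet) \to \Sigma \Omega_1 H_{s-1}(C_\bullet) \to 0,
\]
and desuspending (using that $\Sigma$ is an equivalence on $\amod$, hence exact and faithful) gives the required sequence.

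The only real point to watch is that the hypothesis \emph{reduced} is used at the level of the chain modules $C_n$, not merely on the homology: without it, the kernel of $\lambda$ on $C_\bullet$ is nonzero and we would get a three-term, not two-term, spectral sequence structure. Naturality is automatic since every construction used (the SES of complexes built from $\lambda$, the LES in homology, and the identifications from Proposition \ref{prop:ker_coker_lambda}) is natural in $C_\bullet$.
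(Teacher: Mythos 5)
Your argument is exactly the paper's: exploit reducedness of each $C_n$ to get the short exact sequence of complexes $0\to\Phi C_\bullet\to C_\bullet\to\Sigma\Omega C_\bullet\to 0$, pass to the long exact sequence using exactness of $\Phi$ and $\Sigma$, apply Proposition \ref{prop:ker_coker_lambda} to identify $\mathrm{coker}(\lambda_{H_s})$ and $\ker(\lambda_{H_{s-1}})$, splice, and desuspend. One small slip in terminology: the maps $\Phi H_s(C_\bullet)\to H_s(C_\bullet)$ that you identify with $\lambda$ via naturality are the maps induced by the first arrow of the short exact sequence, not the connecting homomorphisms (those are the boundary maps $\Sigma H_s(\Omega C_\bullet)\to\Phi H_{s-1}(C_\bullet)$); this does not affect the argument.
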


\begin{proof}
 Since each $C_n$ is reduced, the natural transformation $\lambda$ induces a
short exact sequence of complexes
\[
 0
\rightarrow 
\Phi C_\bullet
\rightarrow 
C_\bullet
\rightarrow 
\Sigma \Omega C_\bullet 
\rightarrow
0.
\]
Using the exactness of $\Phi$ and $\Sigma$ together with the naturality of
$\lambda$, the associated long 
exact sequence in homology is 
\[
 \ldots 
\rightarrow 
\Phi H_s (C_\bullet) 
\stackrel{\lambda}{\rightarrow}
H_s (C_\bullet) 
\rightarrow 
\Sigma H_s (\Omega C_\bullet) 
\rightarrow 
\Phi H_{s-1} (C_\bullet) 
\rightarrow 
\ldots \ .
\]
By Proposition \ref{prop:ker_coker_lambda}, the cokernel of
$\lambda_{H_s(C_\bullet)}$ is $\Sigma \Omega H_s(C_\bullet)$ and
its kernel $\Sigma \Omega_1 H_s (C_\bullet)$. Applying these identifications
for $s$ and $s-1$ respectively
gives the stated short exact sequence. 
\end{proof}

\begin{cor}
\label{cor:loop_groth_ses}
For $s, t \in \nat$ and $M$ an unstable module, there is a natural short exact
sequence 
\[
 0
\rightarrow 
\Omega \Omega^t_s M 
\rightarrow 
\Omega^{t+1}_s M
\rightarrow
\Omega_1 \Omega ^t_{s-1} M
\rightarrow
0.
\]
In particular 
\[
\Omega^t _s M = \left\{ 
\begin{array}{ll}
 0 & s >t \\
(\Omega_1 ) ^t M & s =t.
\end{array}
\right.
\]
\end{cor}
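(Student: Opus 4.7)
The plan is to apply Proposition~\ref{prop:grothendieck_ses} to the complex $C_\bullet := \Omega^t P_\bullet$, where $P_\bullet \twoheadrightarrow M$ is a projective resolution in $\unst$, and then to deduce the explicit formula by induction on $t$.

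First I would check that $C_\bullet$ satisfies the hypothesis of Proposition~\ref{prop:grothendieck_ses}, namely that each term is a reduced unstable module. Any projective in $\unst$ is a direct summand of a direct sum of free unstable modules $F(n)$; since $F(n)$ is reduced and reducedness passes to direct sums and direct summands (as $\Phi$ and $\lambda$ respect them), every projective in $\unst$ is reduced. Because $\Omega^t$ preserves projectives in $\unst$, each $\Omega^t P_s$ is projective, hence reduced.

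Next I would identify the relevant homology. By definition $H_s(\Omega^t P_\bullet) = \Omega^t_s M$, and applying Proposition~\ref{prop:D_iterated_loops} to $\Sigma^{-t}(-)$ shows that $\Omega \circ \Omega^t \cong \Omega^{t+1}$ as endofunctors of $\unst$, so $H_s(\Omega C_\bullet) = H_s(\Omega^{t+1} P_\bullet) = \Omega^{t+1}_s M$. Substituting into the sequence of Proposition~\ref{prop:grothendieck_ses} yields the stated natural short exact sequence
\[
0 \to \Omega \Omega^t_s M \to \Omega^{t+1}_s M \to \Omega_1 \Omega^t_{s-1} M \to 0.
\]

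For the second statement I would induct on $t$. The base case $t=0$ is trivial, since $\Omega^0 = \mathrm{id}$. For the inductive step, assuming $\Omega^t_s = 0$ for $s>t$ and $\Omega^t_t \cong (\Omega_1)^t$: if $s>t+1$ then both outer terms of the short exact sequence vanish (the right one because $s-1>t$), forcing $\Omega^{t+1}_s M = 0$; if $s = t+1$ then the left term vanishes by the inductive vanishing, so $\Omega^{t+1}_{t+1} M \cong \Omega_1 \Omega^t_t M \cong (\Omega_1)^{t+1} M$. I do not anticipate a real obstacle: the only delicate point is verifying the reducedness hypothesis of Proposition~\ref{prop:grothendieck_ses}, which is handled by the observation above.
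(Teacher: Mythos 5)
Your proposal is correct and follows essentially the same route as the paper: take a projective resolution $P_\bullet \to M$ in $\unst$, note that $\Omega^t P_\bullet$ is a complex of reduced (projective) unstable modules, apply Proposition~\ref{prop:grothendieck_ses}, and then induct on $t$. The only difference is that you spell out the routine checks (reducedness of projectives, the identification $\Omega\circ\Omega^t\cong\Omega^{t+1}$, the details of the induction) which the paper leaves implicit.
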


\begin{proof}
 Let $P_\bullet \rightarrow M$ be a projective resolution of $M$ in $\unst$ and
take $C_\bullet := \Omega^t P_\bullet$, which is a complex of projective
unstable modules (by Proposition \ref{prop:Omega_projectives}) and these are 
reduced. 
The homology of $C_\bullet$ is, by definition, $H_s (C_\bullet) = \Omega^t_s M$,
whereas 
$H_s (\Omega C_\bullet)= \Omega^{t+1}_s M$.  The
required short exact sequence 
is furnished by Proposition \ref{prop:grothendieck_ses}. 

The final statement is proved by a straightforward induction  upon $t$.
\end{proof}

\begin{exer}
Assuming Zarati's theorem that an unstable module $M$ (over $\field_2$) is
nil-closed if and only if $M$ and $\Omega M$ are both reduced, show that $M$ is 
nilclosed if and only if $\Omega^t_s
M =0$ for $s>0$ and $t\leq 2$.
\end{exer}

\begin{rem}
For natural numbers $t_1, t_2$ and an unstable module $M$,
there is a Grothendieck 
spectral sequence 
\[ 
 \Omega^{t_1}_p \Omega^{t_2} _q  M \Rightarrow \Omega^{t_1+t_2}_{p+q}M. 
\]
The short exact sequence of Corollary \ref{cor:loop_groth_ses} corresponds to 
the case $t_1=1$.
\end{rem}

Corollary \ref{cor:loop_groth_ses} leads to an estimation of the connectivity of
the modules $\Omega^t_s M$. 

\begin{defn}
 For $M $ an $\cala$-module, the connectivity of $M$, $\conn (M) \in \zed \cup
\{-\infty,\infty\}$, is 
\[
 \conn (M) := \sup \{ i | M_j = 0, \  \forall j \leq i \}.
\]
\end{defn}

\begin{lem}
\label{lem:conn_Phi}
 For $M$ an $\cala$-module, $\conn (\Phi M) = 2 \conn (M ) +1$.
\end{lem}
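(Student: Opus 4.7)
The plan is to unwind both definitions and read off the answer directly; this is really a bookkeeping exercise in the internal grading, so I do not expect any genuine obstacle.

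First I would recall that, by definition, $\Phi M$ is concentrated in even internal degrees with $(\Phi M)^{2k} = M^k$ and $(\Phi M)^{2k+1} = 0$ for all $k$. Set $c := \conn(M)$, so that $M^j = 0$ for all $j \leq c$. Then for $(\Phi M)^j$ to be nonzero we need $j = 2k$ for some $k$ with $M^k \neq 0$, which forces $k \geq c+1$, i.e.\ $j \geq 2c+2$. Conversely, every $j \leq 2c+1$ is either odd (in which case $(\Phi M)^j = 0$ automatically) or of the form $2k$ with $k \leq c$ (so $(\Phi M)^{2k} = M^k = 0$). Hence $(\Phi M)^j = 0$ for all $j \leq 2c+1$, and the bound is sharp provided $M \neq 0$, since $(\Phi M)^{2c+2} = M^{c+1} \neq 0$ when $c$ is finite. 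This gives $\conn(\Phi M) = 2c+1$.

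Finally, I would dispose of the boundary cases. If $M = 0$ then $\Phi M = 0$ and both sides are $\infty$ under the convention $2 \cdot \infty + 1 = \infty$. If $M$ is unbounded below then $c = -\infty$, and the preceding argument shows $(\Phi M)^{2k} = M^k \neq 0$ for arbitrarily negative $k$, so $\conn(\Phi M) = -\infty$, consistent with $2(-\infty)+1 = -\infty$. No use of the $\cala$-action is needed; the statement is entirely about the underlying graded vector space.
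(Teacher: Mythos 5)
Your proof is correct, and since the paper gives no argument beyond ``Straightforward,'' this direct degree count (using $(\Phi M)^{2k}=M^k$, $(\Phi M)^{\text{odd}}=0$) is exactly the intended computation. The treatment of the boundary cases $\conn(M)=\pm\infty$ is also fine under the stated conventions.
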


\begin{proof}
An immediate consequence of the definitions of $\Phi$ and of $\conn$.
\end{proof}

\begin{prop}
 For $s, k \in \nat$ and $M$ an unstable module:
\[
 \conn (\Omega^{s+k}_s M) \geq 2^s (\conn (M) - k ).
\]
\end{prop}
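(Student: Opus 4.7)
The plan is to establish the bound by induction, using the short exact sequence of Corollary \ref{cor:loop_groth_ses} (with $t = s+k-1$)
\[
0 \rightarrow \Omega \Omega^{s+k-1}_s M \rightarrow \Omega^{s+k}_s M \rightarrow \Omega_1 \Omega^{s+k-1}_{s-1} M \rightarrow 0
\]
to reduce the connectivity estimate for $\Omega^{s+k}_s M$ to estimates for $\Omega^{s+(k-1)}_s M$ and $\Omega^{(s-1)+k}_{s-1} M$. Since the connectivity of the middle term of a short exact sequence is bounded below by the minimum of the connectivities of the outer terms, this drives a double induction on $s$ and on $k$.

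First I would record two elementary estimates, valid for any $N \in \unst$: from Proposition \ref{prop:ker_coker_lambda}, $\Sigma \Omega N$ is a quotient of $N$ while $\Sigma \Omega_1 N$ injects into $\Phi N$. Combined with Lemma \ref{lem:conn_Phi}, these yield
\[
\conn(\Omega N) \geq \conn(N) - 1, \qquad \conn(\Omega_1 N) \geq 2 \conn(N).
\]
Iterating these bounds settles the two corner cases of the induction: for $s = 0$, one obtains $\conn(\Omega^k M) \geq \conn(M) - k$ as required, and for $k = 0$, the identification $\Omega^s_s = (\Omega_1)^s$ from Corollary \ref{cor:loop_groth_ses} gives $\conn((\Omega_1)^s M) \geq 2^s \conn(M)$.

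For the inductive step with $s, k \geq 1$, I would assume the bound at $(s, k-1)$ and at $(s-1, k)$. The right-hand term of the short exact sequence then has connectivity at least $2 \cdot 2^{s-1}(\conn(M) - k) = 2^s(\conn(M) - k)$ by the $\Omega_1$-estimate, while the left-hand term has connectivity at least $2^s(\conn(M) - (k-1)) - 1 \geq 2^s(\conn(M) - k)$ by the $\Omega$-estimate, the inequality holding since $2^s \geq 1$. The claimed bound on the middle term follows.

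I do not expect any genuine obstacle: the argument is essentially arithmetic once the two preliminary estimates are in hand, and the conceptual content is already visible there --- the doubling in the exponent $2^s$ is supplied by the factor of $2$ in $\conn(\Omega_1 N) \geq 2\conn(N)$ (coming from $\Phi$), while the unit of slack gained from $\Phi$ exactly cancels the unit lost by the desuspension implicit in $\Omega$.
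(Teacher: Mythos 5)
Your proposal is correct and follows essentially the same route as the paper: the two preliminary estimates $\conn(\Omega N) \geq \conn(N)-1$ and $\conn(\Omega_1 N)\geq 2\conn(N)$, followed by induction using the Grothendieck short exact sequence of Corollary \ref{cor:loop_groth_ses}. The paper compresses the bookkeeping into the phrase ``induction upon $s$,'' but the double induction on $s$ and $k$ that you make explicit is exactly what that induction unpacks to, since the left-hand term of the short exact sequence requires the bound at $(s,k-1)$ and the right-hand term the bound at $(s-1,k)$.
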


\begin{proof}
It is clear that $\conn (\Omega M ) \geq \conn (M) -1$ and, 
 by Lemma \ref{lem:conn_Phi}, $\conn (\Omega_1 M) \geq 2 \conn (M)$. 
 The general result is proved by induction upon $s$, using the Grothendieck
short exact sequence of Corollary \ref{cor:loop_groth_ses} for the inductive
step. 
\end{proof}

\begin{rem}
 Since an unstable module $M$ is always concentrated in non-negative degrees,
$\conn (M) \geq -1$, hence it is clear that the previous statement is not 
optimal.
\end{rem}
%%%%%%%%%%%%%%%%%%%%%%%%%%%%%%%%%%%%%%%%%%%%%%%%%%%%%%%%%%%%%%%%%%%%%%%%%%%%%%%%
%%%%%%%%%
\subsection{Interactions between loops and destabilization}

Recall from Proposition \ref{prop:D_iterated_loops} that, 
for $t\in \nat$, there is a natural isomorphism between $\Omega^t D , D
\Sigma^{-t}: \amod \rightrightarrows 
\unst$. The following result is another application of Proposition
\ref{prop:grothendieck_ses}:

\begin{cor}
\label{cor:destab_groth_ses}
 For $M$ an $\cala$-module, there is a natural short exact sequence:
\[
 0
\rightarrow 
\Omega (D_s M) 
\rightarrow 
D_s(\Sigma^{-1}M ) 
\rightarrow
\Omega_1 (D_{s-1}M)
\rightarrow 
0.
\]
\end{cor}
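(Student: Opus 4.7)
The plan is to mirror the proof of Corollary \ref{cor:loop_groth_ses}, but replacing the projective resolution in $\unst$ by a free resolution in $\amod$, and replacing the functor $\Omega^t$ by $D$. The key input is again the Grothendieck-style short exact sequence of Proposition \ref{prop:grothendieck_ses}, so the whole problem reduces to producing the right chain complex of reduced unstable modules.

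First I would choose a free resolution $F_\bullet \twoheadrightarrow M$ in $\amod$ and set $C_\bullet := D F_\bullet$. By construction each $C_s = DF_s$ is a direct sum of free unstable modules $F(n)$, hence reduced (the reducedness of $F(n)$ was already used in the proof of Proposition \ref{prop:ker_coker_lambda}), so Proposition \ref{prop:grothendieck_ses} applies. By definition of the left derived functors of $D$, $H_s(C_\bullet) = D_s M$, which identifies the outer terms of the short exact sequence with $\Omega D_s M$ and $\Omega_1 D_{s-1} M$.

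Next I would identify the middle term $H_s(\Omega C_\bullet)$ with $D_s(\Sigma^{-1} M)$. Since $\Sigma^{-1} : \amod \rightarrow \amod$ is exact and sends free modules to free modules, $\Sigma^{-1} F_\bullet \twoheadrightarrow \Sigma^{-1} M$ is again a free resolution in $\amod$, so $H_s(D \Sigma^{-1} F_\bullet) = D_s(\Sigma^{-1} M)$. The natural isomorphism $\Omega D \cong D \Sigma^{-1}$ of Proposition \ref{prop:D_iterated_loops}, applied termwise, then gives a natural isomorphism of complexes $\Omega C_\bullet = \Omega D F_\bullet \cong D \Sigma^{-1} F_\bullet$, and hence $H_s(\Omega C_\bullet) \cong D_s(\Sigma^{-1} M)$.

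Substituting these identifications into the short exact sequence provided by Proposition \ref{prop:grothendieck_ses} yields the claim, and naturality is automatic from the naturality of all the ingredients (choice of free resolution, $\lambda$, and the isomorphism $\Omega D \cong D \Sigma^{-1}$). The only point that requires any care, and that I would flag as the main thing to verify, is the compatibility of the isomorphism $\Omega D \cong D\Sigma^{-1}$ with the differentials of $F_\bullet$, so that it really upgrades to an isomorphism of chain complexes rather than merely a levelwise isomorphism; this follows from the naturality statement in Proposition \ref{prop:D_iterated_loops}.
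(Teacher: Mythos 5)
Your proof is correct and follows essentially the same route as the paper's: take a free resolution $F_\bullet \to M$ in $\amod$, set $C_\bullet = DF_\bullet$, use Proposition \ref{prop:D_iterated_loops} to identify $\Omega C_\bullet \cong D\Sigma^{-1}F_\bullet$, and feed the resulting homology identifications into Proposition \ref{prop:grothendieck_ses}. The extra points you flag (reducedness of the $DF_s$, compatibility of the levelwise isomorphism with the differentials) are exactly the details the paper leaves implicit.
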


\begin{proof}
 Let $F_\bullet \rightarrow M$ be a free resolution of $M$ (in $\amod$)
and take $C_\bullet = 
D F_\bullet$, which is a complex of projective {\em unstable} modules by 
Proposition \ref{prop:destab}. 

Proposition \ref{prop:D_iterated_loops} implies that $\Omega C_\bullet$ is
naturally isomorphic to  $D (\Sigma^{-1} F_\bullet)$; $\Sigma^{-1}F_\bullet$ is 
a projective
resolution of $\Sigma^{-1}M$, hence
the homology of $\Omega C_\bullet$ calculates the derived functors $D_s
(\Sigma^{-1}M)$, whereas the homology 
of $C_\bullet$ calculates the derived functors $D_s M$. The result follows
immediately from Proposition \ref{prop:grothendieck_ses}.
 \end{proof}

\begin{rem}
The module $\Omega_1 (D_{s-1} M ) $ is the obstruction to $\Omega (D_s M )
\rightarrow D_s (\Sigma^{-1}M) $ being an isomorphism.
This is zero if and only if $D_{s-1}M$ is reduced, by Corollary 
\ref{cor:omega1_reduced}. 
\end{rem}

\begin{rem}
For $m \in \nat$ and an $\cala$-module $M$,  there is a Grothendieck spectral 
sequence
\[
 \Omega^m _p D_q M
\Rightarrow 
D_{p+q} \Sigma^{-m}M.
\]
The short exact sequence of Corollary \ref{cor:destab_groth_ses} corresponds to 
the case $m=1$.
\end{rem}

%%%%%%%%%%%%%%%%%%%%%%%%%%%%%%%%%%%%%%%%%%%%%%%%%%%%%%%%%%%%%%%%%%%%%%%%%%%%%%%%
%%
\subsection{Connectivity for $D_s$}

The explicit identification of the destabilization functor $DM = M/BM$ (see 
Exercise \ref{exer:destab_explicit}) 
leads to the following result:

\begin{lem}
\label{lem:destab_connectivity}
 For $M $ an $\cala$-module, the natural surjection 
$M \twoheadrightarrow DM$ is an isomorphism in degrees $\leq 2 (\conn M +1) $. 
\end{lem}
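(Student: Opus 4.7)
The plan is to unwind the explicit description $DM = M/BM$ recalled in the earlier exercise, where $BM = \langle Sq^i(x) \mid i > |x|\rangle$ as $x$ ranges over elements of $M$. Since the natural map $M \twoheadrightarrow DM$ has kernel $BM$, it suffices to show that $BM$ vanishes in degrees $\leq 2(\conn M + 1)$.

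First I would set $c := \conn M$, so that $M^j = 0$ for $j \leq c$; in particular any nonzero homogeneous $x \in M$ has $|x| \geq c+1$. The core step is then a one-line degree count: for any generator $Sq^i(x)$ of $BM$ with $i > |x|$ and $x \neq 0$,
\[
|Sq^i(x)| = |x| + i > 2|x| \geq 2(c+1).
\]
Hence every generator, and therefore every element, of $BM$ lives in degrees strictly greater than $2(\conn M + 1)$. Consequently $(BM)^n = 0$ for $n \leq 2(\conn M + 1)$, which is exactly the claim that $M \to M/BM = DM$ is an isomorphism through that range.

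There is no real obstacle here; the only thing to be slightly careful about is the convention for connectivity (the supremum of $i$ with $M_j = 0$ for all $j \leq i$), which forces $|x| \geq c+1$ rather than $|x| \geq c$, and hence gives the correct bound $2(\conn M + 1)$ rather than an off-by-one variant. The argument uses nothing beyond the explicit formula for $D$.
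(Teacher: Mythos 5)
Your argument is correct and is essentially identical to the paper's proof: both identify the lowest degree of a nonzero generator $Sq^i(x)$ of $BM$ as strictly greater than $2(\conn M + 1)$ by the same degree count, so the quotient map is an isomorphism in that range.
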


\begin{proof}
 The lowest degree element (if it exists - i.e. if $\conn (M)$ is finite) of $M$
has degree   $\conn (M) +1$, hence the lowest degree element of $BM$
has 
degree at least  $2 (\conn (M) +1 )+1 $. The result follows. 
\end{proof}

The following statement is a general result for connected algebras, stated here
for the Steenrod algebra.

\begin{lem}
\label{lem:proj_res_conn}
 An $\cala$-module $M$ has a free resolution $F_\bullet \rightarrow M$ in
$\amod$ with
 $\conn (F_s) \geq  \conn (M) +s $.
\end{lem}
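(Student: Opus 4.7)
The plan is to build the resolution degree by degree via an inductive procedure, exploiting the connectedness of $\cala$ (i.e.\ $\cala^0 = \field$). Set $c := \conn(M)$, so that $M$ is concentrated in degrees $\geq c+1$. First I would construct $F_0$ by choosing a basis of $M$ as a graded $\field$-vector space: for each $n \geq c+1$, pick a basis of $M^n$ and take the corresponding direct sum $F_0 := \bigoplus_{n \geq c+1} (\Sigma^n \cala)^{\oplus \dim M^n}$, with the tautological surjection $F_0 \twoheadrightarrow M$. Since each summand $\Sigma^n \cala$ has connectivity $n-1 \geq c$, we get $\conn(F_0) \geq c = \conn(M)$, which is the $s=0$ case.

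Next I would analyze the kernel $K_0 := \ker(F_0 \twoheadrightarrow M)$. The key observation is that in the bottom degree $c+1$, the module $F_0^{c+1}$ receives contributions only from the degree-zero part $\cala^0 = \field$ acting on the generators placed in degree $c+1$ (there are no lower-degree generators on which positive-degree elements of $\cala$ could act to land in degree $c+1$). Consequently, the map $F_0^{c+1} \to M^{c+1}$ is an isomorphism by construction, so $K_0^{c+1} = 0$; combined with the obvious vanishing in degrees $\leq c$, this gives $\conn(K_0) \geq c + 1 = \conn(M) + 1$.

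The proof then proceeds by induction on $s$. Assuming that any $\cala$-module $N$ admits a free resolution $G_\bullet \to N$ with $\conn(G_s) \geq \conn(N) + s$, apply this to $N = K_0$ (whose connectivity is at least $\conn(M) + 1$) to obtain $G_\bullet \to K_0$ with $\conn(G_s) \geq \conn(M) + s + 1$. Splice this onto $F_0$ by setting $F_{s+1} := G_s$, with the differential $F_1 \to F_0$ being the composite $G_0 \twoheadrightarrow K_0 \hookrightarrow F_0$. This yields a free resolution $F_\bullet \to M$ with $\conn(F_s) \geq \conn(M) + s$ for all $s \in \nat$.

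No step is really an obstacle here: the construction is essentially forced once one observes how connectedness of $\cala$ prevents the bottom degree of a minimally chosen free cover from picking up extraneous elements. The only thing worth stating carefully is that one must choose $F_0$ based on a basis of $M$ (rather than, say, $\cala \otimes M$ as a vector space) so that the map is an isomorphism in the bottom degree; otherwise the connectivity jump in $K_0$ could fail.
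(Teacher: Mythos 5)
Your argument is correct and is surely the ``straightforward'' proof the author had in mind: choose $F_0$ to be the free module on a graded basis of $M$, observe that connectedness of $\cala$ (i.e.\ $\cala^0=\field$ and $\cala$ concentrated in non-negative degrees) forces the covering map to be an isomorphism in the bottom degree $\conn(M)+1$, so the kernel $K_0$ satisfies $\conn(K_0)\geq\conn(M)+1$, and iterate. The key point---that the \emph{minimal} choice of free cover makes the kernel gain one degree of connectivity---is exactly right, and splicing works as you describe.

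One small caveat about presentation: your ``induction on $s$'' is phrased circularly, since the inductive hypothesis you invoke is the full statement you are trying to prove (for all modules $N$). What you are actually doing is a recursive construction: build $F_0$ and $K_0$ as above, then build $F_1$ as a minimal free cover of $K_0$ and $K_1$ as its kernel, and so on; at each stage the same bottom-degree argument gives $\conn(F_s)\geq\conn(K_{s-1})\geq\conn(M)+s$ and $\conn(K_s)\geq\conn(K_{s-1})+1$. Since each $F_s$ is constructed before $F_{s+1}$, there is no circularity. Alternatively, one can prove by honest induction on $n$ that a partial resolution $F_n\to\cdots\to F_0\to M$ exists with the stated connectivity bounds and with $\conn(\ker(F_n\to F_{n-1}))\geq\conn(M)+n+1$. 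Either phrasing repairs the logical form without changing the content.
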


\begin{proof}
 An exercise for the reader.
\end{proof}

The following weak result is sufficient for the initial applications; a much
stronger result holds (combine Lemma \ref{lem:conn_dhom} with Theorem 
\ref{thm:deriv_destab}). 

\begin{prop}
\label{prop:conn_Ds}
 For $0< s \in\nat$ and $M$ an $\cala$-module
\[
 \conn (D_s M) \geq 2 (\conn M + s).
\]
\end{prop}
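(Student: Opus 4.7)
The plan is to combine the two previous lemmas: the connectivity-preserving free resolution (Lemma \ref{lem:proj_res_conn}) and the fact that $F \twoheadrightarrow DF$ is an isomorphism in a wide range of low degrees (Lemma \ref{lem:destab_connectivity}). The derived functors $D_s M$ are computed as the homology of $D F_\bullet$ where $F_\bullet \to M$ is a free resolution, so the strategy is to show that in degrees $n \leq 2(\conn M + s)$, the chain complex $D F_\bullet$ agrees with $F_\bullet$ near homological degree $s$, and thus inherits its acyclicity there.

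Concretely, first choose $F_\bullet \to M$ as furnished by Lemma \ref{lem:proj_res_conn}, so that $\conn(F_r) \geq \conn M + r$ for every $r$. Next, for each $r$, Lemma \ref{lem:destab_connectivity} gives that the natural surjection $F_r \twoheadrightarrow D F_r$ is an isomorphism in degrees $\leq 2(\conn F_r + 1) \geq 2(\conn M + r + 1)$. Setting $c = \conn M$ and fixing $n \leq 2(c + s)$, one checks that each of $F_{s-1}$, $F_s$, $F_{s+1}$ is within this range: for $F_{s-1}$ the bound is $n \leq 2(c+s)$, which is exactly the hypothesis, and for $F_s$, $F_{s+1}$ the bound is even weaker.

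By naturality of $F \twoheadrightarrow DF$, the relevant three-term stretch of $D F_\bullet$ in degree $n$ is identified with that of $F_\bullet$ in degree $n$. Since $F_\bullet$ is a resolution, its homology vanishes in homological degree $s > 0$, so $(D_s M)^n = H_s(D F_\bullet)^n = 0$. This holds for all $n \leq 2(c+s)$, giving the desired bound $\conn(D_s M) \geq 2(\conn M + s)$.

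The only step requiring care is the bookkeeping in the previous paragraph, namely verifying that all three neighbouring terms in homological degrees $s-1$, $s$, $s+1$ fall within the range where $F_r \to D F_r$ is an isomorphism; the binding constraint comes from $F_{s-1}$, which is precisely why the bound cannot be pushed further by this elementary argument (a sharper result will come from the more refined chain complex $\dhom M$ mentioned in the remark).
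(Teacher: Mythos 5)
Your proof is correct and follows essentially the same route as the paper's: take the connectivity-preserving free resolution from Lemma \ref{lem:proj_res_conn}, observe via Lemma \ref{lem:destab_connectivity} that the vertical surjections $F_r \twoheadrightarrow DF_r$ are isomorphisms in degrees $\leq 2(\conn M + s)$ for $r \in \{s-1,s,s+1\}$, and read off the vanishing of $H_s(DF_\bullet)$ from the exactness of $F_\bullet$. The only difference is that you spell out the degree bookkeeping (identifying $F_{s-1}$ as the binding constraint) which the paper leaves implicit.
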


\begin{proof}
It is sufficient to treat the case $\conn (M)$ finite (the other cases are
clear), hence we may take a free resolution
$F_\bullet \rightarrow M$ 
as in Lemma \ref{lem:proj_res_conn}. Consider the natural surjection of
complexes $F_\bullet \twoheadrightarrow 
D F_\bullet$. For $s>0$, the portion pertinent to $H_s$ is 
\[
 \xymatrix{
F_{s+1}
\ar[r]
\ar@{->>}[d]
&
F_s 
\ar@{->>}[d] 
\ar[r]
&
F_{s-1}
\ar@{->>}[d]
\\
DF_{s+1}
\ar[r]
&
DF_s
\ar[r]
&
D F_{s-1}
,
}
\]
where the top row is exact and the homology of the bottom row (in degree $s$) is
$D_s M$, by definition. 
The vertical morphisms are all isomorphisms in degrees $\leq 2 (\conn M + s)$,
by the hypothesis on $F_\bullet$ 
together with Lemma \ref{lem:destab_connectivity}; the result follows. 
\end{proof}

\begin{nota}
 For $M$ an $\cala$-module and $c \in \zed$, let $M^{>c}$ denote the sub
$\cala$-module of elements of degree $>c$, so that $\conn (M^{>c}) \geq c$.
\end{nota}

There is a natural short exact sequence of $\cala$-modules
\begin{eqnarray}
 \label{eqn:truncate_ses}
0
\rightarrow 
M^{>c}
\rightarrow 
M
\rightarrow
M/M^{>c}
\rightarrow 
0
\end{eqnarray}
and natural inclusions $M^{>c+1} \hookrightarrow M^{>c}$ and surjections 
$M/M^{>c+1} \twoheadrightarrow M/M^{>c}$ such that 
\begin{eqnarray*}
 M & \cong & \colim{c\rightarrow -\infty} M^{>c}\\
M &\cong & \lim_{c \rightarrow \infty} M/M^{>c}.
\end{eqnarray*}

\begin{prop}
\label{prop:destab_inv_limit}
 For $M$ an $\cala$-module, $s \in \nat$ and $c \in \zed$, the natural morphism
\[
 D_s M \rightarrow D_s (M/M^{>c}) 
\]
is an isomorphism in degrees $\leq 2 ( c+ s-1)$. 

Hence the natural morphism 
\[
 D_s M \rightarrow \lim _{c \rightarrow \infty} D_s (M/M^{>c})
\]
is an isomorphism. 
\end{prop}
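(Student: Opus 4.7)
The plan is to apply the long exact sequence of left derived functors to the short exact sequence \eqref{eqn:truncate_ses} and to exploit the connectivity bounds of Proposition \ref{prop:conn_Ds} applied to the submodule $M^{>c}$.

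Concretely, applying $D_*$ to $0 \to M^{>c} \to M \to M/M^{>c} \to 0$ produces the natural long exact sequence
\[
\cdots \to D_{s+1}(M/M^{>c}) \to D_s(M^{>c}) \to D_s M \to D_s(M/M^{>c}) \to D_{s-1}(M^{>c}) \to \cdots .
\]
For the middle map to be an isomorphism in a given degree $d$ it suffices that $D_s(M^{>c})$ and $D_{s-1}(M^{>c})$ both vanish in degree $d$. By construction $\conn(M^{>c}) \geq c$, so Proposition \ref{prop:conn_Ds} applied to $M^{>c}$ yields $\conn(D_k(M^{>c})) \geq 2(c+k)$ for every $k \geq 1$. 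In particular $D_s(M^{>c})$ and $D_{s-1}(M^{>c})$ both vanish in degrees $\leq 2(c+s-1)$, which is exactly the range appearing in the statement, and the first assertion follows from exactness.

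For the inverse limit statement, fix a degree $d \in \zed$. For any $c$ large enough that $2(c+s-1) \geq d$, the first part of the proposition shows that $(D_s M)_d \to (D_s(M/M^{>c}))_d$ is an isomorphism; consequently the transition maps $(D_s(M/M^{>c+1}))_d \to (D_s(M/M^{>c}))_d$ in the pro-system are isomorphisms for all sufficiently large $c$. The inverse system in degree $d$ is therefore eventually constant with value $(D_s M)_d$, so $(\lim_c D_s(M/M^{>c}))_d \cong (D_s M)_d$. Since this holds in every degree, the natural map $D_s M \to \lim_c D_s(M/M^{>c})$ is an isomorphism.

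The main delicate point is the uniform application of the connectivity bound: when $s \in \{0,1\}$ one of the indices $k \in \{s, s-1\}$ is zero, and Proposition \ref{prop:conn_Ds} does not apply directly; there one must argue instead via Lemma \ref{lem:destab_connectivity}, using that $D_0(M^{>c})$ is a quotient of $M^{>c}$ and therefore inherits its connectivity. Once this boundary case is handled, the degree bookkeeping in the long exact sequence is routine, and the passage to the limit is immediate.
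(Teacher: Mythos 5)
Your proof reproduces the paper's argument: the long exact sequence of left derived functors applied to the truncation sequence (\ref{eqn:truncate_ses}), combined with the connectivity bound of Proposition \ref{prop:conn_Ds} applied to $M^{>c}$, and the same local--stabilization reasoning for the inverse limit. So the approach is the paper's approach.

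However, the boundary cases $s \in \{0,1\}$ that you flag remain a genuine gap, and your proposed repair does not close it. The observation that $D_0(M^{>c})$ is a quotient of $M^{>c}$ gives only $\conn(D_0(M^{>c})) \geq c$; but the long exact sequence argument needs this connectivity to be $\geq 2(c+s-1)$ when the relevant index $k = s-1$ (resp.\ $k=s$) equals $0$, i.e.\ $\geq 2c$ when $s=1$ and $\geq 2(c-1)$ when $s=0$. Both exceed $c$ once $c$ is large, so the required vanishing of $D_0(M^{>c})$ in the stated range simply does not follow. In fact the bound $\leq 2(c+s-1)$ fails outright for small $s$: take $M = \cala$ and $c \geq 1$. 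Then $D_1 \cala = 0$, while the long exact sequence identifies $D_1(\cala/\cala^{>c})$ with $\ker\big(D(\cala^{>c}) \to D\cala\big) = D(\cala^{>c})$, because $D\cala \cong \field$ is concentrated in degree $0$ and $D(\cala^{>c})$ lives in degrees $\geq c+1$; and $D(\cala^{>c})$ is nonzero in degree $c+1 \leq 2c$. So the natural map is not surjective in degree $c+1$, which is inside the claimed range. (The paper's own proof has the same defect — it applies Proposition \ref{prop:conn_Ds} to $D_{s-1}(M^{>c})$ without regard to the hypothesis $s-1 > 0$.) For $s \in \{0,1\}$ one only gets an isomorphism in degrees $\leq c$; since this still grows without bound as $c \to \infty$, the inverse limit assertion, which is all that is used subsequently, remains valid, and your treatment of the limit is fine.
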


\begin{proof}
 Consider the long exact sequence for the derived functors $D_s$ associated to
the short exact sequence (\ref{eqn:truncate_ses}):
\[
 \ldots \rightarrow
D_s (M^{>c}) 
\rightarrow 
D_s M 
\rightarrow 
D_s (M/M^{>c}) 
\rightarrow 
D_{s-1}(M^{>c})
\rightarrow 
\ldots \ .
\]
 Proposition \ref{prop:conn_Ds} implies that $\conn (D_s (M^{>c}) ) \geq 2 (c+s)
$ and $\conn (D_{s-1}(M^{>c})) \geq 2(c+s-1)$.
The first statement follows immediately, implying the  second. 
\end{proof}

\begin{rem}
 Proposition \ref{prop:destab_inv_limit} implies that, to study the derived 
functors $D_s$,  it is sufficient 
 to 
consider $\cala$-modules $M$  which are {\em bounded above} (i.e. such that 
$M^{>c}=0$ for $c
\gg 0$).
\end{rem}

%%%%%%%%%%%%%%%%%%%%%%%%%%%%%%%%%%%%%%%%%%%%%%%%%%%%%%%%%%%%%%%%%%%%%%%%%%%%%%%%
%%%%%
\subsection{Comparing $D_s$ and $\Omega^t_s$}
This section establishes a precise relationship between the derived functors of 
destabilization and of iterated loop functors.
(This material is slightly technical and is not required for the subsequent 
results, hence can be skipped on first reading.)

Throughout the section, $M$ is taken to be an iterated desuspension of an 
unstable module,
so that there exists $T \in \nat$ such that $\Sigma^t M$ is unstable $\forall t
\geq T$. 
 If $M \neq 0$, $\conn (M)$ is finite; by Lemma \ref{lem:proj_res_conn}, there
exists a free resolution 
of $M$ in $\amod$, $F_\bullet \rightarrow M$, with $\conn (F_s) \geq \conn 
(M)+s$.
Consider the free 
resolution $\Sigma^t F_\bullet $ of $\Sigma^t M$, for $t\geq T$. Then, by
construction, $D (\Sigma^t F_\bullet)$ 
is a complex of projective unstable modules which has homology $H_s (D \Sigma^t
F_\bullet) \cong D_s (\Sigma^t M)$ and, in particular, $H_0 (D \Sigma^t
F_\bullet)= \Sigma^t M$; 
Proposition \ref{prop:conn_Ds} implies that, for $s >0$, $\conn (H_s (D \Sigma^t
F_\bullet)) = \conn (D_s (\Sigma^t M)) \geq 2 (\conn M +s +t) 
\geq 2 (\conn M + t +1)$. 

\begin{rem}
 The hypothesis upon $T$ implies that $\conn M +T +1 \geq 0$. 
\end{rem}

The complex $D \Sigma ^t F_\bullet$ can be seen as an approximation to a
projective resolution (in unstable modules) of the unstable module $\Sigma^t M$.
More precisely, one has 
the following:
 
\begin{lem}
\label{lem:approx_proj_res}
Let $M$ be an $\cala$-module and $t \in \nat$ such that $\Sigma^t M$ is 
unstable.
 There is a short exact sequence of complexes of projectives in $\unst$:
\begin{eqnarray}
\label{eqn:approx_proj_res}
 0 
\rightarrow 
D \Sigma^t F_\bullet
\rightarrow 
P_\bullet
\rightarrow 
Q_\bullet
\rightarrow 0
\end{eqnarray}
such that 
\begin{enumerate}
 \item 
$P_\bullet$ is a projective resolution of $\Sigma^t M$ in $\unst$;
\item
$D \Sigma^t F_\bullet
\rightarrow 
P_\bullet$ induces an isomorphism on $H_0$;
\item
$Q_0 =0$ and, $\forall s$, $\conn (Q_s) \geq 2 (\conn M + t +1)$. 
\end{enumerate}
\end{lem}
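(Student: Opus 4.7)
The plan is to construct $P_\bullet$ inductively on homological degree, taking $P_s := C_s \oplus Q_s$ with $C_\bullet := D\Sigma^t F_\bullet$, and defining the differential of $P$ so that $C_\bullet \hookrightarrow P_\bullet$ is a subcomplex. This automatically makes the quotient $Q_\bullet$ a complex with the required splitting, and reduces the problem to successively choosing $Q_{s+1}$ as a projective in $\unst$ that surjects onto an appropriately-defined obstruction module $N_s$ measuring the failure of the partial complex to be exact.

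For the base case, set $P_0 := C_0$ and $Q_0 := 0$, with augmentation inherited from $F_0 \twoheadrightarrow M$. Right-exactness of $D$ applied to the free resolution $\Sigma^t F_\bullet \to \Sigma^t M$, combined with $D\Sigma^t M = \Sigma^t M$ (valid for $t \geq T$ since $\Sigma^t M$ is unstable), gives $H_0(C_\bullet) = \Sigma^t M$, establishing property (2). For the inductive step, assume $P_0,\ldots,P_s$ have been constructed with $P_i = C_i \oplus Q_i$, the differential extending $d^C$, and the truncated complex $P_s \to \cdots \to P_0 \to \Sigma^t M$ exact at $P_0, \ldots, P_{s-1}$. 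Let $Z_s \subseteq P_s$ denote the cycles and define $N_s := \operatorname{coker}(C_{s+1} \xrightarrow{d^C_{s+1}} Z_s)$. Choose a surjection $Q_{s+1} \twoheadrightarrow N_s$ from a projective in $\unst$ (for instance a sum of $F(n)$'s indexed by generators) of the same connectivity as $N_s$, lift it along $Z_s \twoheadrightarrow N_s$ to $Q_{s+1} \to Z_s \subseteq P_s$, and put $P_{s+1} := C_{s+1} \oplus Q_{s+1}$ with differential $d^C_{s+1}$ on the first summand and the chosen lift on the second. Since $d_{s+1}$ lands in $Z_s$, one has $d_s d_{s+1} = 0$; and the combined image equals $Z_s$ by definition of $N_s$, so the extended complex is exact at $P_s$.

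The main obstacle is verifying inductively that each $Q_s$ satisfies $\conn(Q_s) \geq 2(\conn M + t + 1)$. For $s=1$, the identity $H_0(C_\bullet) = \Sigma^t M$ forces $C_1 \twoheadrightarrow Z_0$, so $N_0 = 0$ and one takes $Q_1 = 0$. For $s \geq 1$, splitting cycles by their $C$- and $Q$-components yields a short exact sequence $0 \to Z_s^C \to Z_s \to Z_s' \to 0$ where $Z_s^C := Z_s \cap C_s$ consists of the cycles of $C_\bullet$ at $s$, and $Z_s' \subseteq Q_s$ is the image under projection onto the second summand. Quotienting by $\operatorname{image}(d^C_{s+1}) \subseteq Z_s^C$ produces the short exact sequence
\[
0 \to H_s(C_\bullet) \to N_s \to Z_s' \to 0.
\]
Here $H_s(C_\bullet) = D_s(\Sigma^t M)$ has connectivity at least $2(\conn M + t + s) \geq 2(\conn M + t + 1)$ by Proposition \ref{prop:conn_Ds}, while $Z_s' \subseteq Q_s$ inherits connectivity at least $\conn(Q_s) \geq 2(\conn M + t + 1)$ from the inductive hypothesis. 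Both outer terms satisfy the bound, hence so does $N_s$, and so does its projective cover $Q_{s+1}$, closing the induction and yielding the short exact sequence (\ref{eqn:approx_proj_res}) with all three required properties.
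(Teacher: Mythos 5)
Your argument is correct and supplies exactly the "general argument in homological algebra" that the paper's proof appeals to without detail: an inductive, degree-by-degree construction of $P_\bullet$ as a termwise direct sum $C_s\oplus Q_s$, with $Q_{s+1}$ chosen as a projective cover of the obstruction module $N_s=\operatorname{coker}(C_{s+1}\to Z_s)$, and the connectivity bound on $Q_{s+1}$ deduced from the short exact sequence $0\to H_s(C_\bullet)\to N_s\to Z_s'\to 0$ together with Proposition \ref{prop:conn_Ds} and the inductive bound on $\conn(Q_s)$. This is the expected construction and matches the intent of the paper's one-line proof.
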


\begin{proof} (Indications.)
This is proved by the algebraic analogue of adding cells  in the process of CW 
approximation. 
Starting from the morphism $D \Sigma^t F_\bullet \rightarrow \Sigma^t M$ 
(considered as a morphism of chain complexes in $\unst$), 
 one adds `cells' (free unstable modules) to the complex $D \Sigma^t F_\bullet$ 
to obtain a factorization 
 \[
  \xymatrix{
  D \Sigma^t F_\bullet 
  \ar[r]
  \ar@{>->}[d]
  &
  \Sigma^t M
 \\
 P_\bullet
 \ar@{.>}[ur]_\simeq
  }
 \]
with the dotted map a weak equivalence. By construction, $P_\bullet$ is a 
projective resolution of $\Sigma^t M$, the vertical map induces an isomorphism 
in $H_0$, moreover it is an inclusion with cokernel $Q_\bullet$ a complex of 
projective unstable modules.

Finally, the connectivity estimate for the homology of $D \Sigma^t F_\bullet$ 
gives a lower bound on the connectivity of the cells which need to be added, 
hence upon $Q_\bullet$.
\end{proof}

\begin{prop}
Let $M$ be an $\cala$-module and $t\in \nat$ such that $\Sigma^t M$ is unstable.
Then, for all $s \in \nat$, 
the natural 
morphism 
\[
 D_s M \rightarrow 
\Omega^t _s \Sigma^t M
\]
is an isomorphism in degrees $\leq 2 (\conn M +1) +t $.  
\end{prop}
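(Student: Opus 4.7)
The plan is to build the natural comparison map from the approximate projective resolution furnished by Lemma \ref{lem:approx_proj_res}, then to control the error via a connectivity estimate on the complex $Q_\bullet$.

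First, choose a free resolution $F_\bullet \to M$ in $\amod$ with $\conn(F_s)\geq \conn(M)+s$ (Lemma \ref{lem:proj_res_conn}), and consider the short exact sequence of complexes of projectives in $\unst$
\[
0 \to D\Sigma^t F_\bullet \to P_\bullet \to Q_\bullet \to 0
\]
given by Lemma \ref{lem:approx_proj_res}, where $P_\bullet$ is a projective resolution of $\Sigma^tM$ in $\unst$, $Q_0=0$, and $\conn(Q_s)\geq 2(\conn M+t+1)$. Since at each homological degree both $D\Sigma^t F_s$ and $Q_s$ are projective in $\unst$, this sequence splits levelwise in $\unst$, so applying the right-exact functor $\Omega^t$ still yields a short exact sequence of complexes
\[
0 \to \Omega^t D\Sigma^t F_\bullet \to \Omega^t P_\bullet \to \Omega^t Q_\bullet \to 0.
\]

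Next, identify the terms. By Proposition \ref{prop:D_iterated_loops},
\[
\Omega^t D\Sigma^t F_\bullet \;\cong\; D\Sigma^{-t}\Sigma^t F_\bullet \;=\; DF_\bullet,
\]
whose homology computes $D_\bullet M$ by definition. On the other hand, $\Omega^t P_\bullet$ is the application of $\Omega^t$ to a projective resolution of $\Sigma^tM$, so its homology computes $\Omega^t_\bullet \Sigma^tM$. Under these identifications the inclusion $DF_\bullet \to \Omega^t P_\bullet$ in homology is precisely the natural map $D_sM \to \Omega^t_s\Sigma^tM$ of the exercise (this being the only choice compatible with the adjunction on $H_0$).

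Finally, estimate the error. For any unstable module $X$, the quotient $X \twoheadrightarrow DX$ gives $\conn(DX)\geq \conn(X)$, so $\conn(\Omega X)=\conn(D\Sigma^{-1}X)\geq \conn(X)-1$, and iterating
\[
\conn(\Omega^t Q_s) \;\geq\; \conn(Q_s)-t \;\geq\; 2(\conn M +t+1)-t \;=\; 2(\conn M+1)+t.
\]
Hence $\conn(H_s(\Omega^t Q_\bullet))\geq 2(\conn M+1)+t$ for every $s$, and the long exact sequence
\[
\ldots \to H_{s+1}(\Omega^t Q_\bullet) \to D_sM \to \Omega^t_s\Sigma^tM \to H_s(\Omega^t Q_\bullet) \to \ldots
\]
yields the claimed isomorphism in degrees $\leq 2(\conn M+1)+t$.

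The only non-routine point I foresee is justifying that the short exact sequence of complexes survives the application of $\Omega^t$, which is why it is crucial that $Q_\bullet$ is termwise projective so that the sequence splits in each homological degree; everything else reduces to a direct connectivity count together with the isomorphism $\Omega^t D\cong D\Sigma^{-t}$.
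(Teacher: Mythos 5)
Your proof is correct and follows essentially the same route as the paper: form the short exact sequence of complexes from Lemma \ref{lem:approx_proj_res}, apply $\Omega^t$, identify the first term with $DF_\bullet$ via Proposition \ref{prop:D_iterated_loops}, and conclude from the connectivity bound $\conn(\Omega^t Q_s)\geq 2(\conn M+1)+t$ together with the long exact sequence in homology. The one thing you make explicit that the paper leaves implicit is why the short exact sequence of complexes survives the application of the right-exact functor $\Omega^t$ — your observation that the sequence splits levelwise because $Q_s$ is projective in $\unst$ is exactly the right justification.
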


\begin{proof}
 Consider $F_\bullet \rightarrow M$ as above and the short exact sequence
(\ref{eqn:approx_proj_res})  of Lemma \ref{lem:approx_proj_res}. 
 Applying the functor $\Omega^t$ and using the natural isomorphism $\Omega^t D
(\Sigma^t F_\bullet) 
\cong D F_\bullet$ given by Proposition \ref{prop:D_iterated_loops}, this yields
a short exact sequence
of complexes 
\[
 0 
\rightarrow 
D F_\bullet
\rightarrow
\Omega ^t P_\bullet
\rightarrow 
\Omega ^t Q_\bullet 
\rightarrow 
0,
\]
where the first morphism induces $D_s M \rightarrow \Omega^t _s \Sigma ^t M $ 
in 
homology. 

The connectivity condition on $Q_\bullet$ implies that $\conn (\Omega^t Q_s)
\geq 2 (\conn M + 1) +t$. The result follows from the long exact sequence in
homology.
\end{proof}

\begin{cor}
 For $M$ an $\cala$-module and $T\in \nat$ such that $\Sigma^T M$ is unstable,
there is a natural isomorphism
\[
 D_s M \cong 
\lim _{T \leq t\rightarrow \infty } \Omega^t _s \Sigma^t M
\]
and the inverse system stabilizes locally for $t \gg 0$ (i.e. in any given
degree).
\end{cor}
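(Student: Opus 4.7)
The plan is to derive both assertions from the preceding proposition, once the inverse system has been set up. The first step is to construct transition maps $\Omega^{t+1}_s \Sigma^{t+1} M \to \Omega^t_s \Sigma^t M$ for $t \geq T$. Since $\Sigma^t M$ is unstable, the adjunction counit $\Omega \Sigma (\Sigma^t M) \to \Sigma^t M$ is an isomorphism. Choosing projective resolutions $P^{(t)}_\bullet \twoheadrightarrow \Sigma^t M$ in $\unst$ for each $t \geq T$, the complex $\Omega P^{(t+1)}_\bullet$ is a complex of projective unstable modules augmented over $\Sigma^t M$, hence admits a chain map (unique up to chain homotopy) to $P^{(t)}_\bullet$ by the comparison theorem. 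Applying $\Omega^t$ and taking $H_s$ produces the required transition map.

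The second step is to verify that the maps $D_s M \to \Omega^t_s \Sigma^t M$ constructed in the proof of the preceding proposition are compatible with these transition maps, so that $D_s M$ sits as the apex of a cone over the inverse system. This follows formally: the maps arise from the chain-level morphism $D\Sigma^t F_\bullet \to P^{(t)}_\bullet$ of Lemma \ref{lem:approx_proj_res}, together with the natural isomorphism $\Omega^t D \Sigma^t F_\bullet \cong D F_\bullet$ of Proposition \ref{prop:D_iterated_loops}. All lifts between projective resolutions being unique up to chain homotopy, the resulting triangles commute on homology.

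The final step is the appeal to the preceding proposition. In any fixed internal degree $n$, as soon as $t \geq \max\{T,\, n - 2(\conn M + 1)\}$, both $D_s M \to \Omega^t_s \Sigma^t M$ and $D_s M \to \Omega^{t+1}_s \Sigma^{t+1} M$ are isomorphisms in degree $n$; by the compatibility just verified, the transition map is an isomorphism in degree $n$ for such $t$. This is exactly what is meant by local stabilization of the inverse system. Consequently the limit in degree $n$ exists, equals $(D_s M)^n$, and the cone map $D_s M \to \lim_{T \leq t \to \infty} \Omega^t_s \Sigma^t M$ is a degree-wise, hence global, isomorphism.

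The only subtle point is the construction and verification of naturality of the transition maps; however this is formal homological algebra built on the already-established $\Omega \Sigma \cong \mathrm{id}$ on unstable modules and on $\Omega^t D \cong D\Sigma^{-t}$, so it is not a genuine obstacle. The quantitative content of the corollary is entirely carried by the connectivity bound $2(\conn M + 1) + t$ of the preceding proposition.
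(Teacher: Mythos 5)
Your argument is correct and supplies exactly the details the paper leaves to the reader when it marks this proof as straightforward. The inverse system is built by deriving the counit isomorphism $\Omega\Sigma \cong \mathrm{id}$ on $\unst$, compatibility with the comparison morphisms $D_s M \to \Omega^t_s \Sigma^t M$ is the standard uniqueness-up-to-chain-homotopy argument, and the local stabilization in each internal degree follows directly from the connectivity bound $2(\conn M + 1) + t$ of the preceding proposition.
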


\begin{proof}
An exercise for the reader. 
\end{proof}

\begin{exer}
Let $M \in \amod$ be a module which is bounded below ($M^n=0$ for $n \ll 0$).
Show that, for fixed $s,d \in \nat$, 
there exist $c, T\in \nat $ such that $\Sigma^T (M/ M^{>c})$ is unstable and,
for all $t \geq T$,
\[
 (D_s M)^d \cong 
\big(
\Omega^t _s \Sigma^t (M/M^{>c})
\big)^d.
\]
\end{exer}

\section{Singer functors}

Singer introduced a series of functors which are indispensable for understanding
the derived functors of iterated loops and destabilization.
 This section  recalls the definition of (variants of) these.

%%%%%%%%%%%%%%%%%%%%%%%%%%%%%%%%%%%%%%%%%%%%%%%%%%%%%%%%%%%%%%%%%%%%%%%%%%%%%%%%
\subsection{The unstable Singer functors $R_s$}

Following Lannes and Zarati \cite{lannes_zarati_deriv_destab}, we recall the
construction of the {\em unstable} Singer functors 
$R_s$, for $s \in \nat$; by convention $R_0$ is the identity functor $R_0 :
\unst \rightarrow \unst$. 

\begin{nota}
 For $s\in \nat$, let $D(s)$ denote the $s$th Dickson algebra,
which is defined as the algebra of invariants
\[
D(s):=  H^* (BV_s) ^{GL_s}, 
\]
where the action of the general linear group on the cohomology of the
classifying space $BV_s$  is induced by the
natural action on $V_s = \field^{\oplus s}$.
\end{nota}

The Dickson algebra $D(s)$ has underlying algebra the polynomial algebra $\field
[\omega_{s, i } | 0 \leq i \leq s-1]$, 
 where $\omega_{s,i}$ is the Dickson invariant of degree $2^{s}- 2^i$ (for
example, the top Dickson invariant, $\omega_{s,0}$, 
is the product of all non-zero classes in $H^1 (BV_s)$). (See \cite{wilk} for 
further details on the Dickson algebras.)

\begin{nota}
 For $K$ an unstable algebra, let $K\dash\unst$ denote the category of
$K$-modules in $\unst$; forgetting the 
module structure defines a functor $K\dash\unst\rightarrow \unst$. 
\end{nota}

An object of $K\dash\unst$ is an unstable module $M$ equipped with a $K$-module
structure such that the structure 
map $K \otimes M \rightarrow M $ is $\cala$-linear.

\begin{prop}
For $K$ an unstable algebra, the category $K\dash\unst$ has a unique abelian
structure such that $K\dash\unst \rightarrow \unst$ 
is exact. Moreover the tensor product of $K$-modules $\otimes _K$ defines a
tensor structure on $K\dash\unst$, with unit $K$ (i.e. 
 $K\dash\unst$ is a symmetric monoidal category $(K\dash\unst, \otimes_K, K)$).
\end{prop}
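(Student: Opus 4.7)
The plan is to treat this as a general fact about categories of modules over a commutative algebra object in the abelian symmetric monoidal category $(\unst, \otimes, \field)$, and simply check that the two pieces of the axiomatics --- the abelian structure and the tensor structure --- pass from $\unst$ to $K\dash\unst$.

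First I would establish the abelian structure. Given a morphism $f : M \to N$ in $K\dash\unst$, I form its kernel and cokernel in $\unst$. Because tensor product is exact (Proposition on $\unst$ being closed under $\otimes$, together with exactness of $\otimes$ in each variable over $\field$), the functor $K \otimes -$ preserves these (co)kernels, so the diagrams expressing the $K$-module axioms for $M$ and $N$ restrict and descend to give canonical $\cala$-linear $K$-actions on $\ker f$ and $\mathrm{coker}\, f$. A direct check verifies these satisfy the universal properties in $K\dash\unst$. Finite biproducts are similarly transported. Uniqueness of the abelian structure is automatic: if $K\dash\unst \to \unst$ is to be exact, then kernels and cokernels must agree with those in $\unst$ as underlying unstable modules, which determines the abelian structure up to canonical isomorphism.

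Next I would construct $\otimes_K$. For $M, N$ in $K\dash\unst$, define $M \otimes_K N$ as the coequalizer, computed in $\unst$, of the two parallel morphisms
\[
M \otimes K \otimes N \rightrightarrows M \otimes N
\]
given respectively by the right $K$-action on $M$ and the left $K$-action on $N$ (using commutativity of $K$ to freely swap sides). The coequalizer exists in $\unst$ since $\unst$ is cocomplete, and by the exactness of $K \otimes -$ it inherits a $K$-module structure from either factor, these agreeing by construction. The unit isomorphism $K \otimes_K M \cong M$ follows from the reflexive coequalizer describing the action map $K \otimes M \to M$; associativity and symmetry are inherited from the symmetric monoidal structure of $\unst$ by standard diagram chases, exploiting the commutativity of $K$ for the compatibility of the two possible $K$-actions on a threefold tensor product.

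The main obstacle, such as it is, lies in the tensor structure rather than the abelian structure: one must verify that the coequalizer defining $\otimes_K$ is well-behaved enough that the coherence isomorphisms (associator, unit, symmetry) descend from $\unst$ and satisfy the pentagon and hexagon axioms. This is entirely formal given that $\unst$ is symmetric monoidal with $\otimes$ exact and colimit-preserving in each variable, and that $K$ is commutative; no feature specific to the Steenrod algebra enters. Accordingly the proof reduces to invoking the general theorem on modules over a commutative monoid in a cocomplete symmetric monoidal abelian category with biexact tensor product, and I would state it as such rather than spelling out the coherence diagrams.
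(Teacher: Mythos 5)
Your proposal is correct and fills in the details of what the paper leaves as ``Straightforward''; it is the standard argument showing that modules over a commutative monoid in a (co)complete abelian symmetric monoidal category with biexact tensor inherit an abelian structure and a relative tensor product, and there is no genuinely different route the paper could have in mind.

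One small remark: when you endow the coequalizer $M\otimes_K N$ with its $K$-action, exactness of $K\otimes -$ is not really the operative fact. What you use is that the two parallel maps $M\otimes K\otimes N \rightrightarrows M\otimes N$ are $K$-linear for the ``outer'' $K$-action, so the coequalizer diagram already lives in $K\dash\unst$ and its colimit there has the same underlying unstable module as the colimit in $\unst$ (the forgetful functor creates colimits). Exactness of $K\otimes -$ (automatic here since $\otimes$ is over the field $\field$) is what you need in the earlier step, to see that the $K$-module axiom diagrams for $M$ and $N$ restrict to $\ker f$ and descend to $\mathrm{coker}\,f$. With that reorganization the argument is exactly as it should be.
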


\begin{proof}
An unstable algebra $K$ is, in particular, a unital commutative monoid in 
$\unst$
 and the category $K\dash\unst$ is its category of modules. The result is 
standard and 
is left as an exercise for the reader.
\end{proof}

Recall that $H^* (BV_1) \cong \field [u]$, with $|u|=1$, has a canonical
unstable algebra structure.

\begin{defn}
\label{def:St_1}
 For $M$ an unstable module, let 
\begin{enumerate}
 \item 
$St_1 : \Phi M \rightarrow \field[u] \otimes M$ denote the linear map
(not $\cala$-linear)
 defined by $St_1 (x):= \sum u^{|x|-i}\otimes Sq^i (x)$;
\item
$R_1 M$ denote the sub $\field[u]$-module of $\field [u] \otimes M$ generated by
$\{ St_1 (x) | x \in M\}$. 
\end{enumerate}
\end{defn}

\begin{rem}
The above notation is adopted for typographical simplicity; strictly speaking, 
$St_1 (x)$ should be 
denoted $St_1 (\Phi x)$.
\end{rem}

\begin{prop} 
 \cite{lannes_zarati_deriv_destab}
 For $M$ an unstable module, the sub $\field[u]$-module $R_1M \subset \field 
[u]\otimes M$ is stable under
the $\cala$-action, hence $R_1$ defines a 
functor $R_1 : \unst \rightarrow \field[u]\dash \unst$. 
\end{prop}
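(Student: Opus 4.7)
Since $\cala$ is generated by the Steenrod squares $Sq^n$, $n \geq 0$, it suffices to show that $Sq^n \cdot St_1(x) \in R_1 M$ for every $n \geq 0$ and every $x \in M$. The functoriality $R_1 : \unst \rightarrow \field[u]\dash\unst$ is then immediate from the $\field$-linearity and naturality of $x \mapsto St_1(x)$ in $M$.

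My plan is to establish the explicit formula
\[
 Sq^n \, St_1(x) \;=\; \sum_{i \geq 0} \binom{|x|-i}{n-2i}\, u^{\,n-2i}\, St_1\bigl(Sq^i x\bigr),
\]
with binomials read mod $2$ and the convention $\binom{k}{j} = 0$ for $j < 0$, so that only finitely many summands contribute (effectively $0 \leq i \leq \lfloor n/2 \rfloor$). Each summand on the right-hand side is manifestly an $\field[u]$-multiple of some $St_1(y)$, so the formula yields the $\cala$-stability of $R_1 M$ directly. As a sanity check at the top $u$-degree: the leading contribution to the left-hand side comes from $Sq^n(u^{|x|} \otimes x) = \binom{|x|}{n}\, u^{|x|+n} \otimes x$, which agrees with the $i = 0$ summand on the right. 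Small-degree verifications (for instance $n=1$ gives $Sq^1 St_1(x) = |x|\, u \cdot St_1(x)$, and $n=2$ gives $Sq^2 St_1(x) = \binom{|x|}{2} u^2 St_1(x) + St_1(Sq^1 x)$) confirm the shape of the formula.

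The verification is a Cartan-formula computation, using $Sq^a(u^k) = \binom{k}{a}\, u^{k+a}$ (which follows from $Sq^T(u) = u + u^2$). Expanding the left-hand side gives
\[
 Sq^n \, St_1(x) \;=\; \sum_i \sum_{a+b=n} \binom{|x|-i}{a}\, u^{|x|-i+a} \otimes Sq^b Sq^i(x),
\]
and a parallel expansion of the right-hand side produces a comparable double sum in $u^{?} \otimes Sq^b Sq^i(x)$. The main obstacle is that the two expansions do \emph{not} agree term-by-term: they only become equal after reducing the non-admissible compositions $Sq^b Sq^i$ via the Adem relations. The cleanest way to handle this is to reduce to the universal case $M = F(|x|)$, where admissible monomials $Sq^J x$ form a $\field$-basis and the identity becomes a purely combinatorial statement — a Vandermonde-type congruence of sums of binomial coefficients mod $2$. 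Naturality of $St_1$ then transports the identity to every $M \in \unst$, completing the proof that $R_1 M$ is stable under the $\cala$-action.
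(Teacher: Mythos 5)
Your route is a valid alternative to the paper's, and the closed formula you write down for $Sq^n\,St_1(x)$ is in fact correct (I checked it against further nontrivial cases, e.g. $|x|=3$, $n=4$, where the verification uses $Sq^2 Sq^2 = Sq^3 Sq^1$ and $Sq^4 Sq^3 = Sq^5 Sq^2$). If established, the formula gives more than the proposition asks: it pins down the $\cala$-module structure of $R_1 M$, not merely its $\cala$-stability. The gap is in the verification step. Expanding both sides in the admissible basis of $F(|x|)$ does \emph{not} reduce directly to a Vandermonde congruence: reducing the non-admissible $Sq^b Sq^j$'s via Adem contributes coefficients $\binom{j-c-1}{b-2c}$ to the double sum, and matching the outcome against the single binomial $\binom{|x|-i}{n-2i}$ on the other side is a genuinely nontrivial Adem manipulation, not a bare Vandermonde identity. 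The missing ingredient that makes Vandermonde appear is the \emph{multiplicativity} of $St_1$ on unstable algebras, $St_1(xy)=St_1(x)\,St_1(y)$ (a direct consequence of the Cartan formula, and stated as part of Exercise~\ref{exer:R1}). Embed $F(n)\hookrightarrow H^*(BV_n)$ by sending the generator to $u_1\cdots u_n$; the formula for a single degree-one class is a short direct check, and the inductive step via the Cartan formula and multiplicativity reduces precisely to the identity $\sum_{a'+b'=n-2i-2j}\binom{|x|-i}{a'}\binom{|y|-j}{b'}=\binom{|x|+|y|-i-j}{n-2i-2j}$, which \emph{is} Vandermonde. Naturality of $St_1$ then transports the identity to all of $\unst$. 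Without invoking multiplicativity, your reduction to $F(|x|)$ leaves you with a substantial Adem computation that the plan does not carry out.

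It is also worth noting that the paper's intended proof (see the hint in Remark~\ref{rem:relate_R1_D1}) sidesteps any formula entirely: one realizes $\Sigma R_1 N$ as the image of the $\cala$-linear connecting morphism $\alpha_{\Sigma N}: D_1 N \to \Sigma\field[u]\otimes N$ arising from $0\to\field[u]\otimes N\to\hat P\otimes N\to\Sigma^{-1}N\to 0$, and $\cala$-stability of the image of an $\cala$-linear map defined on the $\cala$-module $D_1 N$ is automatic. That argument is conceptually cheaper but carries less explicit information than your formula does once the latter is actually established.
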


\begin{proof}
The proof is left as an essential exercise for the reader. (Remark 
\ref{rem:relate_R1_D1} and Exercise \ref{exer:LZ}  below provide the ingredients.
Namely, consider $\Sigma \hat{P} \otimes M$ (using the notation of Remark \ref{rem:relate_R1_D1}); 
 Exercise \ref{exer:destab_explicit} shows that $B (\Sigma \hat{P} \otimes M)$ is a sub $\cala$-module; it suffices
 to identify this.)
\end{proof}

\begin{rem}
 Forgetting the $\field[u]$-module structure, $R_1$ is frequently considered as
a functor $R_1 : \unst \rightarrow \unst$. However, the $\field[u]
$-module structure is  important when considering iterated loop
functors in Section \ref{subsect:iterated_loops}.
\end{rem}

The functor $R_1$ has many remarkable properties, such as indicated in 
Proposition \ref{prop:Singer_1_properties} below. (The 
richness of the behaviour of these functors is further exhibited by localizing 
away from nilpotents; for this, 
see 
\cite{p_singer_Rs}.) 

\begin{prop}
\label{prop:Singer_1_properties}
(Cf. \cite{lannes_zarati_deriv_destab}.) Let $M, N$ be unstable modules.
\begin{enumerate}
 \item 
 The functor $R_1 : \unst \rightarrow \field [u] \dash\unst $ is exact; more 
precisely, the underlying $\field[u]$-module 
of $R_1 M$ is isomorphic to $\field[u] \otimes \Phi M$.
\item
There is a natural isomorphism of unstable modules
\[
 \field \otimes_{\field[u]} R_1 M \cong \Phi M;
\]
the canonical surjection is written $\rho_1 : R_1M \twoheadrightarrow \Phi M$
and there is a natural short exact sequence in 
$\field[u] \dash\unst$:
\[
 0
\rightarrow 
u R_1 M
\rightarrow 
R_1 M 
\rightarrow
\Phi M 
\rightarrow
0\]
\item
The functor $R_1$ preserves tensor products: there is a natural
isomorphism $R_1 (M \otimes N) \cong R_1 M \otimes _{\field[u]} R_1 N$.
\end{enumerate}
\end{prop}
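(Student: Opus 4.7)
The plan is to reduce all three parts to the identification in part (1) of $R_1 M$ with $\field[u] \otimes \Phi M$ as a $\field[u]$-module. For part (1), I would introduce the $\field[u]$-linear map
\[
\psi_M : \field[u] \otimes \Phi M \longrightarrow \field[u] \otimes M, \qquad p(u) \otimes \Phi x \longmapsto p(u) \cdot St_1(x),
\]
whose image is $R_1 M$ by definition. The key step is injectivity. Since $St_1(x) = u^{|x|} \otimes x + \sum_{i>0} u^{|x|-i} \otimes Sq^i x$ has leading $u$-coefficient $x$, if one picks a homogeneous basis $\{x_\alpha\}$ of $M$ and examines a putative relation $\sum_\alpha p_\alpha(u) St_1(x_\alpha) = 0$, isolating the highest-$u$-degree contribution exhibits a linear dependence among the $x_\alpha$, forcing each $p_\alpha = 0$. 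Exactness of $R_1 : \unst \to \field[u]\dash\unst$ then follows: as a functor to $\field[u]$-modules, $R_1 \cong \field[u] \otimes \Phi(-)$ is exact (since $\Phi$ is exact and $\field[u] \otimes -$ is flat over $\field$), and $\cala$-linearity of the induced morphisms is inherited from the $\cala$-linearity of the inclusion $R_1 M \hookrightarrow \field[u] \otimes M$ (established in the preceding proposition).

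Part (2) is a direct corollary: applying $\field \otimes_{\field[u]} -$ to $\psi_M$ yields $\field \otimes_{\field[u]} R_1 M \cong \Phi M$, and the asserted short exact sequence is the image under $\psi_M$ of the standard sequence $0 \to u(\field[u] \otimes \Phi M) \to \field[u] \otimes \Phi M \to \Phi M \to 0$; its arrows are morphisms in $\field[u]\dash\unst$ since they are restrictions of $\field[u]$- and $\cala$-linear maps on $\field[u] \otimes M$. For part (3), I would first verify, by a direct expansion using the Cartan formula, the multiplicativity
\[
St_1(x \otimes y) = \sum_i u^{|x|+|y|-i} \otimes \!\!\sum_{a+b=i}\!\! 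Sq^a x \otimes Sq^b y = St_1(x) \cdot St_1(y)
\]
inside $(\field[u] \otimes M) \otimes_{\field[u]} (\field[u] \otimes N) \cong \field[u] \otimes (M \otimes N)$. Since the multiplication map on the right is $\cala$-linear for the diagonal action, it restricts to a morphism $R_1 M \otimes_{\field[u]} R_1 N \to R_1(M \otimes N)$ in $\field[u]\dash\unst$. Applying part (1) to each factor, and invoking the earlier proposition that $\Phi$ commutes with tensor products, both sides are canonically identified with $\field[u] \otimes \Phi(M \otimes N)$, so the map is an isomorphism.

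The principal obstacle is the leading-term injectivity argument for part (1). While conceptually transparent, it requires careful grading bookkeeping — in particular, observing that the top-$u$-degree monomial of $St_1(x_\alpha)$ is the unique contribution $u^{|x_\alpha|} \otimes x_\alpha$ and that, in any purported relation, collecting terms of maximal $u$-degree reduces to linear independence in $M$. Once this is in hand, parts (2) and (3) are essentially formal consequences of the identification $R_1 \cong \field[u] \otimes \Phi(-)$, together with the multiplicativity of $St_1$ and the fact that $\Phi$ commutes with tensor products.
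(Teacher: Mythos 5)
The paper gives no proof beyond ``Straightforward,'' so there is nothing to compare with directly; I assess the proposal on its own terms. Parts (1) and (3) are essentially correct (the leading-term argument in (1) should be phrased as an induction on the minimal internal degree of the $x_\alpha$ appearing with nonzero coefficient, since the top $u$-power isolates only those of smallest degree at each stage, and in (3) one must note that the map sends the free $\field[u]$-basis $\{\mathrm{St}_1(x_\alpha)\otimes \mathrm{St}_1(y_\beta)\}$ to the free $\field[u]$-basis $\{\mathrm{St}_1(x_\alpha\otimes y_\beta)\}$, so that the abstract agreement of both sides with $\field[u]\otimes\Phi(M\otimes N)$ is realized by the map itself).

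There is, however, a genuine gap in part (2). You claim that the arrows of the sequence $0\to uR_1M\to R_1M\to \Phi M\to 0$ are ``restrictions of $\field[u]$- and $\cala$-linear maps on $\field[u]\otimes M$.'' The inclusion is indeed such a restriction, but the surjection $\rho_1:R_1M\to\Phi M$ is not: its target $\Phi M$ does not sit inside $\field[u]\otimes M$, and its kernel $uR_1M$ is in general strictly smaller than $R_1M\cap u(\field[u]\otimes M)=R_1M\cap\ker(\epsilon\otimes 1_M)$. Concretely, for $M=\Sigma\field=\langle y\rangle$ one has $\mathrm{St}_1(y)=u\otimes y\in u(\field[u]\otimes M)$, yet $\mathrm{St}_1(y)$ is a $\field[u]$-module generator of $R_1M$ and so does not lie in $uR_1M$; correspondingly $\rho_1(\mathrm{St}_1(y))=\Phi y\neq 0$ while $(\epsilon\otimes 1_M)(\mathrm{St}_1(y))=\lambda_M(\Phi y)=Sq^1y=0$. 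So $\rho_1$ is not obtained by restricting the augmentation. What you actually obtain from $\psi_M$ is the quotient $R_1M/uR_1M$ as an $\cala$-module (legitimately, since $uR_1M$ is $\cala$-stable), but the identification of this quotient with $\Phi M$ \emph{as an $\cala$-module} still has to be checked. The missing ingredient is the Cartan-type identity (due to Lannes--Zarati)
\[
Sq^a\bigl(\mathrm{St}_1(x)\bigr)\;=\;\sum_j \binom{|x|-j}{a-2j}\,u^{a-2j}\,\mathrm{St}_1(Sq^j x),
\]
which, reduced modulo $uR_1M$, gives $Sq^{2j}[\mathrm{St}_1(x)]=[\mathrm{St}_1(Sq^jx)]$ and $Sq^{2j+1}[\mathrm{St}_1(x)]=0$, i.e.\ precisely the $\cala$-action defining $\Phi M$. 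Without this (or an equivalent verification) the assertion that $\rho_1$ is $\cala$-linear is unsupported.
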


\begin{proof}
 See \cite{lannes_zarati_deriv_destab} (or prove this as a non-trivial 
exercise).
\end{proof}

\begin{exer}
\label{exer:R1}
\ 
\begin{enumerate}
\item 
Show that the map $St_1$ is injective and hence deduce part (1) of Proposition 
\ref{prop:Singer_1_properties}.
 \item 
Show that the projection $\rho _1 : R_1 M \rightarrow \Phi M$ is compatible with
$\lambda _M$, namely the following diagram commutes:
\[
 \xymatrix{
R_1M 
\ar[d]_{\rho_1}
\ar@{^(->}[r]
&
\field[u] \otimes M
\ar[d]^{\epsilon}
\\
\Phi M
\ar[r]_{\lambda_M}
&
M,
}
\]
where $\epsilon$ is induced by the augmentation of $\field[u]$. 

(Hint: consider the composite around the top of the diagram, which is a 
morphism of $\field [u]$-modules.) 
\item
Show that the total Steenrod power $St_1$ is multiplicative, when $K$ is an
unstable algebra. Namely, for $x, y \in K$, 
\[
 St_1 (x y) = St_1 (x) St_1 (y)
\]
where the product on the right hand side is formed in the unstable algebra
$\field[u] \otimes K$.
\item
For $K$ an unstable algebra, show that $R_1 K$ is naturally an unstable algebra,
equipped with a natural inclusion 
$\field[u] \hookrightarrow R_1 K$, so that $R_1$ defines a functor $R_1 :
\unstalg \rightarrow \field[u]\downarrow \unstalg$ to the category 
$\field[u]\downarrow \unstalg$ of $\field [u]$-algebras in $\unstalg$.

 (Hint: show that $R_1 K$ is a sub unstable algebra of $\field[u] \otimes K$. 
 For the morphism $ \field[u] \hookrightarrow R_1 K$, apply $R_1$ to the unit 
$\field \rightarrow K$.)
\item
For $K$ an unstable algebra, show that $R_1$ induces a functor $R_1 : K
\dash\unst \rightarrow R_1 K \dash \unst$. 

(Hint: use the functoriality of $R_1$.)
\item
Determine the structure of $R_1 \field[u_2] \subset \field[u_1, u_2]$ and
identify it as the ring of invariants for the action of the upper triangular
subgroup $B_2 \subset GL_2$. (Here $B_2$ is isomorphic to the group $\zed/2$ 
and acts 
 by $u_1 \mapsto u_1$ and $u_2 \mapsto u_2 + u_1$. The ring of invariants can 
be  calculated directly.)
\end{enumerate}
\end{exer}

\begin{rem}
\label{rem:relate_R1_D1}
 Lannes and Zarati \cite{lannes_zarati_deriv_destab} showed that $R_1$ is
intimately related to destabilization.
Namely, the short exact sequence (see Example \ref{exam:e1})
$$0 \rightarrow \field[u] \rightarrow \hat{P}
\rightarrow \Sigma^{-1}\field \rightarrow 0$$
defines a non-trivial class $e_1 \in \ext_\cala ^1 (\Sigma^{-1}\field,
\field[u])$. For an unstable module 
$M$, tensoring gives  the short exact sequence 
\[
 0 \rightarrow \field[u]\otimes M \rightarrow \hat{P}\otimes M \rightarrow
\Sigma^{-1}M \rightarrow 0
\]
and the long exact sequence for derived functors of destabilization induces a
morphism 
\[
 \alpha_M : D_1 (\Sigma^{-1}M) \rightarrow D (\field [u] \otimes M)= \field[u]
\otimes M.
\]
Considering the case $M = \Sigma N$, for an unstable module $N$,  Lannes and 
Zarati observed that
$\alpha_{\Sigma N}$ induces a surjection $$D_1 N \twoheadrightarrow \Sigma R_1 N
\subset \Sigma \field[u] \otimes N.$$

In the case $N=\field$, Lannes and Zarati proved moreover that $D_1 \field \cong
\Sigma R_1 \field \cong \Sigma \field[u]$ (this follows directly from
the chain complex constructed in Section \ref{subsect:destab_chcx} below). 
Proposition \ref{prop:grothendieck_ses} shows that $D_1 (\Sigma^{-1} \field)
\cong \Omega D_1 \field$, which is therefore 
isomorphic to $\field[u]$.
\end{rem}

\begin{exer} \label{exer:LZ} (Cf. \cite{lannes_zarati_deriv_destab}.)
Prove the  result of Lannes and Zarati stated above that, for $N$ an unstable 
module, 
\[
 \alpha_{\Sigma N} : D_1 N \twoheadrightarrow \Sigma R_1 N
\subset \Sigma \field[u] \otimes N
\]
is surjective.

(Hint: using the fact that $N$ is unstable, show that $B (\Sigma \hat{P} 
\otimes N) \subset \Sigma P \otimes N$ 
and identifies with $\Sigma R_1 N \subset \field [u ] \otimes \Sigma N$. Here 
it suffices to consider 
$Sq^i (\Sigma u^{-1} \otimes x)$ for $i >|x|$.)
\end{exer}

\begin{rem}
 The functor $R_1$ has topological significance: let $X$ be a pointed
topological space and write $E\zed/2$ for the universal
cover of $B\zed/2$, which is an acyclic space equipped with a free
$\zed/2$-action. (An explicit model is given by $S^\infty = \colim{n 
\rightarrow 
\infty} S^n$,
with projection $S^\infty 
\rightarrow \mathbb{R}P^\infty$ induced by the $\zed/2$-Galois
coverings $S^n \rightarrow \mathbb{R}P^n$.)

 The diagonal of $X$ induces a $\zed/2$-equivariant
map $E\zed/2_+ \wedge X \rightarrow E\zed/2_+ \wedge X \wedge X$ (here $(-)_+$
denotes the addition of a disjoint basepoint) and passage to the
quotient by the $\zed/2$-action gives:
\[
 \Delta_2 : B\zed/2_+ \wedge X 
\rightarrow 
\mathfrak{S}_2 X := E\zed/2_+ \wedge_{\zed/2} (X \wedge X).
\]
$\mathfrak{S}_2 X $ is the quadratic construction on the pointed space $X$.

In  mod $2$ cohomology, this induces 
\[
 \Delta^*_2 : \tilde{H}^* (\mathfrak{S}_2 X) 
\rightarrow 
H^* (B\zed/2) \otimes \tilde{H}^* (X)
\]
and the image of $\Delta^*_2$ is $R_1 \tilde{H}^* (X)$. This is related to
the {\em construction} of the Steenrod  operations.
\end{rem}

The Singer functors can be iterated. For example, $R_1 R_1 : \unst \rightarrow
R_1 \field[u] \dash \unst$ (see Exercise \ref{exer:R1}) and $R_1 R_1 M$ is the
sub $R_1 \field[u]$-module of 
$\field [u_1, u_2] \otimes M$ which is generated by $St_2 (x):= St_1 (St_1
(x))$. 

\begin{nota}
For $M$ an unstable module, $s\in \nat$ and a fixed basis of $V_s \cong 
\field^{\oplus s}$, define
$St_s$  as a linear map
\[
 St_s : \Phi^s M \rightarrow H^*(BV_s) \otimes M
\]
inductively by $St_s = St_1 \circ St_{s-1}$. 
\end{nota}

\begin{rem}
Here, for precision, one should indicate the basis element used for each $St_1$ 
(cf. \cite{lannes_zarati_deriv_destab}). However, this
issue is resolved by the following result.
\end{rem}

\begin{lem}
 \cite{lannes_zarati_deriv_destab}
 For $M$ an unstable module, the linear map $St_s$ takes values in $D(s) 
\otimes 
M \subset H^* (BV_s) \otimes
M$, hence is independent of the choice of basis of $V_s$ 
used in the definition.
\end{lem}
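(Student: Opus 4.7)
The proof proceeds by induction on $s$. The base case $s = 1$ is immediate, since $GL_1(\field_2)$ is trivial and hence $D(1) = H^*(BV_1) = \field[u]$.

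For the inductive step, assume the statement for $s-1$; I must verify that $St_s(\Phi^s x)$ is invariant under the $GL_s$-action on $H^*(BV_s) \otimes M$ (with trivial action on $M$). Since $GL_s(\field_2)$ is generated by its upper-triangular Borel subgroup $B_s$ together with the transpositions of adjacent basis vectors (which generate the Weyl group $\sym_s$), it suffices to check invariance under $B_s$ and under a transposition $\tau = (j, j+1)$.

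For $B_s$-invariance, decompose $V_s = V_{s-1} \oplus \field\langle e_s \rangle$ so that $B_s$ fits in an extension of $B_{s-1}$ by the unipotent subgroup $U$ sending $e_s \mapsto e_s + v$, $v \in V_{s-1}$. The inductive hypothesis places $St_{s-1}(\Phi^{s-1} x)$ in $D(s-1) \otimes M$, providing $B_{s-1}$-invariance in the variables $u_1, \ldots, u_{s-1}$. Applying $St_1$ in the new variable $u_s$ yields $U$-invariance: this is the module-valued generalization of the computation of Exercise \ref{exer:R1}(5) identifying $R_1 \field[u]$ with the $B_2$-invariants of $\field[u_1, u_2]$. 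Combining gives full $B_s$-invariance.

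The remaining step, invariance under $\tau$, is the crux: the iterated definition $St_s = St_1 \circ \Phi(St_{s-1})$ is not manifestly symmetric in the $u_j$'s. One expands $St_s(\Phi^s x)$ using the multiplicativity of $St_1$ (Exercise \ref{exer:R1}(2)) together with the explicit formula $St_1(\Phi y) = \sum_i u^{|y|-i} \otimes Sq^i y$, and then invokes the Adem relations, in the form of the classical Wu formula, to match the result with the expression obtained by swapping $u_j$ and $u_{j+1}$. This transposition step is the main obstacle, being a genuinely combinatorial identity resting on the relations in $\cala$. A more conceptual alternative is to realize $St_s$ topologically via iterated quadratic constructions, whose natural automorphism group contains $GL_s$, producing the invariance automatically.
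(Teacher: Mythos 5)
Your overall plan — verify $GL_s$-invariance by checking it on generators furnished by the Bruhat decomposition — is a sensible one, and it is close in spirit to what is done in \cite{lannes_zarati_deriv_destab} (which the paper cites for this lemma rather than proving it). However, as written the argument has two significant gaps, and the second one is exactly where the mathematical content of the lemma lives.

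First, the Borel step. You obtain $B_{s-1}$-invariance (indeed $GL_{s-1}$-invariance) in the variables $u_1,\dots,u_{s-1}$ from the inductive hypothesis, since $St_1^{(u_s)}$ is natural in the coefficient module and $GL_{s-1}$ acts trivially on $u_s$; this part is fine. But the claim that ``applying $St_1$ in the new variable $u_s$ yields $U$-invariance'' by ``module-valued generalization'' of the identity $R_1\field[u_2]\cong\field[u_1,u_2]^{B_2}$ is a non-trivial assertion, not a formal consequence. The exercise concerns $R_1$ of the unstable \emph{algebra} $\field[u_2]$; what you need is that $R_1^{(u_s)}$ applied to an unstable $D(s-1)$-module sitting inside $H^*(BV_{s-1})\otimes M$ lands in the $U$-invariants of $H^*(BV_s)\otimes M$. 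This is true, but it is a piece of Mùi-type invariant theory (the identification of $R_1 D(s-1)$ and more generally of the Borel invariants $H^*(BV_s)^{U_s}$ with the algebra generated by the iterated total power classes), and it needs an argument rather than an appeal to the rank-two example. Without it, even the Borel part of the proof is incomplete.

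Second, and more seriously, the transposition step is the heart of the lemma and you leave it entirely unproved. Your diagnosis is correct: after the Borel reduction, all the content is concentrated in showing that $St_2=St_1^{(u_2)}\circ\Phi(St_1^{(u_1)})$ is symmetric in $u_1,u_2$, and this is precisely a reformulation of the Adem relations (it is the relation $R_2\hookrightarrow R_1R_1$ that the paper emphasizes as encoding the quadratic nature of $\cala$). But saying ``invoke the Adem relations, in the form of the classical Wu formula'' is not a proof. Expanding $St_2(\Phi^2 x)$ as $\sum_{i,j}\big(u_1(u_1+u_2)\big)^{|x|-i}u_2^{|x|+i-j}\otimes Sq^jSq^i x$, one must actually carry out the reindexing and apply the Adem relations to exhibit the symmetry; this is a genuine computation (the Wu formula is not really the relevant device here — Wu's formula computes $Sq^i$ on characteristic classes, whereas what you need is a rearrangement identity for compositions $Sq^jSq^i$). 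In a blind attempt this calculation would have to be produced.

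Your proposed topological alternative — realizing $St_s$ via the iterated quadratic construction, whose equivariance under the normalizer of $V_s\subset\sym_{2^s}$ supplies $GL_s$-invariance for free — is a legitimate and attractive route, and is in the spirit of the geometric construction of Kuhn and McCarty that the paper mentions. But as stated it is a one-sentence sketch; making it precise requires identifying $St_s$ with the appropriate component of the total power operation and checking the naturality carefully. So the situation is: the skeleton of the argument and the identification of where the difficulty lies are correct, but the two non-formal steps (Mùi invariance and the $\sym_2$-symmetry of $St_2$) are asserted rather than proved.
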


By construction, the iterated Singer functor $R_1 ^{\circ s}$ comes equipped
with a natural inclusion 
$R_1 ^{\circ s} M \hookrightarrow H^* (BV_s) \otimes M$, which depends upon the
basis used in the construction. This dependency
is removed by the following definition:

\begin{defn}
 For $s \in \nat$, let $R_s : \unst \rightarrow D (s) \dash \unst$ be the
functor defined on an unstable module $M$ by
\[
 R_s (M):= \big ( D(s) \otimes M \big ) \cap R_1 ^{\circ s} M .
\]
\end{defn}

\begin{rem}
 The functor can be defined explicitly by taking $R_s M$ to be the sub
$D(s)$-module of $D(s) \otimes M$ generated by $St_s (x)$, $\forall x \in M$.
The advantage of the previous construction is that it implies immediately that
this
submodule is stable under   $\cala$. 
\end{rem}

\begin{rem}
 The {\em quadratic} nature of the construction is exhibited by the identity for 
$s\geq 2$:
\[
 R_s = \bigcap _{a+b+2=s} R_1^{\circ a} R_2 R_1 ^{\circ b}.
\]
This shows that the functors $R_s$ are determined by the {\em generating}
functor $R_1$ and the {\em relation} 
$R_2 \hookrightarrow R_1 R_1$. 
\end{rem}

\begin{exer}
Make the previous statement precise and prove it (hint:
consider generators for $GL_s$).
\end{exer}

\begin{exer}
For $0< s \in \nat$ and any inclusion $i_s: V_{s-1} \subset V_s$, show that the
canonical inclusions of the Dickson invariants 
fit into  a commutative diagram in $\unstalg$:
\[
 \xymatrix{
D(s) \ar@{^(->}[rr]
\ar@{->>}[d]
&&
H^* (BV_s) 
\ar[d]^{i_s^*}
\\
\Phi D(s-1) 
\ar@{^(->}[r]
&
D(s-1) 
\ar@{^(->}[r]
&
H^*(BV_{s-1}) .
}
\]
(Use Exercise \ref{exer:Phi_unstalg} for the first inclusion of the bottom row.)
In particular, there is a canonical surjection
of unstable algebras $D(s) \twoheadrightarrow \Phi D(s-1)$. 

Explicitly, show that $i_s^*$ maps $\omega_{s,0}$ to zero and $\omega_{s,i}
\mapsto \omega_{s-1, i-1}^2$ for $i >0$.
\end{exer}

\begin{exer}
For $M$ in $D(s-1)\dash\unst$, show that $\Phi M$ is  naturally an object of 
$\Phi
D(s-1)\dash\unst$ and hence, via the surjection
$D(s)\twoheadrightarrow \Phi D(s-1)$, in $D(s)\dash\unst$. (An analogous result
holds replacing the module categories such as $D(s)\dash\unst$ by the category
$D(s)\dash\amod$ of $D(s)$-modules in $\amod$.)
\end{exer}

\begin{prop}
 For $s\in \nat$ and unstable modules $M, N$, 
\begin{enumerate}
 \item  
$R_s: \unst \rightarrow D(s)\dash\unst $ is exact and commutes with tensor
products: 
$R_s (M \otimes N) \cong R_s M \otimes_{D(s)} R_sN$.
\item
The natural transformation $\rho_1$ induces a natural surjection 
$\rho_s : R_s \twoheadrightarrow \Phi R_{s-1}$ via the inclusion $R_s
\hookrightarrow R_1 R_{s-1}$ composed with $(\rho_1)_{R_{s-1}}$, which 
fits into a short exact sequence in $D(s)\dash\unst$:
\[
 0
\rightarrow
\omega_{s,0} R_s M 
\rightarrow
R_s M 
\rightarrow 
\Phi R_{s-1}M
\rightarrow 
0.
\]
\end{enumerate}
\end{prop}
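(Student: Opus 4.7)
The plan is to proceed by induction on $s$, with the base case $s=1$ provided by the preceding results on $R_1$. The key structural fact underpinning both parts is that $R_s M$ is a \emph{free} $D(s)$-module, naturally isomorphic to $D(s) \otimes \Phi^s M$; once this is established, everything else follows essentially formally.

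Granting the freeness $R_s M \cong D(s) \otimes \Phi^s M$, part (1) is immediate: exactness of $R_s$ follows from exactness of $\Phi^s$ (iterating the exactness of $\Phi$) together with exactness of $D(s) \otimes -$, and tensor product compatibility reduces to
\[
R_s(M \otimes N) \cong D(s) \otimes \Phi^s(M \otimes N) \cong D(s) \otimes \Phi^s M \otimes \Phi^s N \cong R_s M \otimes_{D(s)} R_s N,
\]
using that $\Phi$ commutes with tensor products and standard identities for free modules over the commutative ring $D(s)$.

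For part (2), the composite $\rho_s = (\rho_1)_{R_{s-1} M} \circ \iota$, where $\iota : R_s M \hookrightarrow R_1 R_{s-1} M$ is the defining inclusion, lands in $\Phi R_{s-1} M$ and is $D(s)$-linear for the $D(s)$-action on the target factoring through the surjection $D(s) \twoheadrightarrow \Phi D(s-1)$ from the preceding exercise. Surjectivity follows because
\[
\rho_s(St_s(x)) = \rho_1(St_1(St_{s-1}(x))) = \Phi St_{s-1}(x),
\]
which hits the $\Phi D(s-1)$-module generators of $\Phi R_{s-1} M \cong \Phi D(s-1) \otimes \Phi^s M$. Identification of the kernel as $\omega_{s,0} R_s M$ then comes from tensoring the short exact sequence of $D(s)$-modules
\[
0 \rightarrow \omega_{s,0} D(s) \rightarrow D(s) \rightarrow \Phi D(s-1) \rightarrow 0
\]
with $\Phi^s M$ and matching terms via the freeness $R_s M \cong D(s) \otimes \Phi^s M$ together with the inductively known identification $\Phi R_{s-1} M \cong \Phi D(s-1) \otimes \Phi^s M$.

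The main obstacle is establishing the structural freeness $R_s M \cong D(s) \otimes \Phi^s M$ itself. I would prove it inductively: assuming $R_{s-1} M \cong D(s-1) \otimes \Phi^{s-1} M$, apply $R_1$ to see that $R_1 R_{s-1} M$ is free over $\field[u]$ on $\Phi R_{s-1} M \cong \Phi D(s-1) \otimes \Phi^s M$; then compute the intersection with $D(s) \otimes M$ inside $H^*(BV_s) \otimes M$ by exhibiting the generators $St_s(x)$ as $D(s)$-linearly independent. This is an invariant-theoretic computation relating $D(s)$ to the invariants of the relevant parabolic subgroup of $GL_s$ (the stabilizer of the flag $V_{s-1} \subset V_s$) acting on $\field[u] \otimes D(s-1)$. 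The remainder of the proposition is formal once this freeness is in place.
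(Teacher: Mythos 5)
Your approach is correct and matches the standard Lannes--Zarati route that the paper's ``Straightforward'' leaves implicit: everything reduces to the $D(s)$-module freeness $R_s M \cong D(s) \otimes \Phi^s M$ together with the Dickson-algebra surjection $D(s) \twoheadrightarrow \Phi D(s-1)$, which is exactly the hint the paper gives for the case $M = \field$. One small tightening for part (1): you should first exhibit the natural $\cala$-linear morphism $R_s M \otimes_{D(s)} R_s N \rightarrow R_s(M \otimes N)$ coming from the external multiplicativity $St_s(x\otimes y) = St_s(x)\cdot St_s(y)$, and then use your $D(s)$-module computation to verify that this specific map is an isomorphism, rather than letting the chain of isomorphisms produce only an abstract one that a priori need not respect the $\cala$-action.
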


\begin{proof}
See \cite{lannes_zarati_deriv_destab} or prove this as an exercise.  (Hint: for  
$M=\field$, the short exact sequence
corresponds to the natural projection $D(s)\twoheadrightarrow \Phi D(s-1)$.)
\end{proof}

\begin{rem}
\label{rem:alpha_M_s}
 The class $e_1 \in \ext^1_\cala (\Sigma^{-1}\field, \field[u])$ of Remark 
\ref{rem:relate_R1_D1} gives rise, via
Yoneda product, to the class 
$e_s \in \ext^s_\cala (\Sigma^{-s}\field, H^*(BV_s))$ and it is a fundamental
result of Singer's that this class is invariant 
under the action of $GL_s$ (see \cite{lannes_zarati_deriv_destab}, for example).

Standard methods of homological algebra (it is easier to think in terms
of derived categories) show that the functor
$D$ induces a natural morphism (for  $s \leq t \in \nat$)
\[
 \ext^s _\cala (M,N) \rightarrow \hom_\unst (D_t M, D_{t-s}N).
\]
Thus, the class $e_s$ induces a linear morphism (natural in the $\cala$-module 
$M$)
\[
\alpha_s^M : D_s (\Sigma^{-s}M ) \rightarrow D (H^* (BV_s) \otimes M).
\]
If $M$ is unstable, the right hand side is $H^* (BV_s) \otimes M$ and Lannes and
Zarati show that 
$\alpha_s^M$ induces a map 
\[
\alpha_s^M : D_s (\Sigma^{-s}M ) \rightarrow R_sM
\]
(this may also  be seen using the results of the next section). 
This exhibits the relationship between the Singer functor $R_s$ and the derived
functor of destabilization $D_s$.
\end{rem}

%%%%%%%%%%%%%%%%%%%%%%%%%%%%%%%%%%%%%%%%%%%%%%%%%%%%%%%%%%%%%%%%%%%%%%%%%%%%%%%%
\subsection{Singer functors for $\amod$}
\label{subsect:Rs_amod}

The unstable Singer functors $R_s : \unst \rightarrow D(s)\dash\unst
\stackrel{\mathrm{forget}}{\longrightarrow}
\unst$ generalize to 
\[
 \xymatrix{
\amod 
\ar[r] ^(.4){R_s} 
&
D(s)\dash\amod 
\ar[r]^(.6){\mathrm{forget}}
&
\amod,
}
\]
where $D(s)\dash\amod $ is the category of $D(s)$-modules in $\amod$.

Recall that localization gives an inclusion $\field[u]\hookrightarrow
\field[u^{\pm 1}] $ of $\cala$-algebras. If the $\cala$-module $M$ is not 
unstable, 
then the Steenrod total power $St_1$ (see Definition \ref{def:St_1}) on $M$ does 
not take 
values in $\field[u] \otimes
M$; if $M$ is bounded above it takes values 
in $\field[u^{\pm 1}] \otimes M$ but, in the general case, it is necessary to
use a {\em large} tensor product $\largetensor$ (half-completed tensor product 
- 
see
\cite{p_destab}, for example)
so that $St_1$ is a linear map
\[
St_1 : \Phi M 
\rightarrow 
\field [u^{\pm 1}] \largetensor M.                         
                        \]
With this modification, $R_1$ is defined as in the unstable case, so that $R_1
M$ comes equipped with a canonical 
inclusion $R_1 M \hookrightarrow \field [u^{\pm 1}] \largetensor M$. Many of the
good properties of $R_1$ pass to this setting, 
in particular:

\begin{prop}
 The functor $R_1 : \amod \rightarrow \field[u]\dash\amod$ is exact.
\end{prop}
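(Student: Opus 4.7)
My plan is to mimic the argument used for the unstable version of this result (part (1) of the earlier Lannes-Zarati proposition) by exhibiting a natural $\field[u]$-linear isomorphism
\[
\psi_M : \field[u] \otimes_\field \Phi M \xrightarrow{\ \cong\ } R_1 M, \qquad u^j \otimes \Phi x \longmapsto u^j \cdot St_1(x).
\]
Since the forgetful functor $\field[u]\dash\amod \to \amod$ is exact and conservative, once this natural isomorphism is in place, exactness of $R_1$ follows formally from the exactness of $\Phi : \amod \to \amod$ (established in an earlier proposition) together with the exactness of $\field[u] \otimes_\field -$. Note that $\psi_M$ need only be $\field[u]$-linear and natural in $M$, not $\cala$-linear, for this reduction to go through.

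To construct $\psi_M$, I would observe that surjectivity onto $R_1 M$ is immediate from the definition of $R_1 M$ as the $\field[u]$-submodule of $\field[u^{\pm 1}] \largetensor M$ generated by the image of $St_1$. For injectivity, the key point is the leading-term structure of $St_1$: the formula $St_1(x) = \sum_{i \geq 0} u^{|x|-i} \otimes Sq^i x$ has highest $u$-power equal to $u^{|x|}$, attained uniquely at $i=0$ with coefficient $x$. Restricted to a fixed internal degree $d$, the degree relation $j + 2|x_\alpha| = d$ determines $|x_\alpha|$ from $j$, so in a representation $\xi = \sum c_{j,\alpha} u^j \otimes \Phi x_\alpha$ with $\{x_\alpha\}$ drawn from a basis of $M$, taking $j_0$ to be the largest $j$ with some $c_{j_0,\alpha}\neq 0$, the coefficient of the top $u$-power in $\psi_M(\xi)$ equals the linearly independent combination $\sum_\alpha c_{j_0, \alpha}\, x_\alpha$, which is nonzero.

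The principal point of care is the use of the half-completed tensor product $\largetensor$: unlike the unstable case, an element of $\field[u^{\pm 1}] \largetensor M$ in a fixed internal degree can be an infinite formal $u$-series. This is not a genuine obstruction, because any element of $\field[u] \otimes \Phi M$ in a given degree is a finite sum by definition of tensor product, and the projection onto an individual $u^L$-summand of $\field[u^{\pm 1}] \largetensor M$ is well-defined for each $L$. Once $\psi_M$ is verified to be a natural $\field[u]$-linear isomorphism by this leading-term argument, exactness of $R_1$ is immediate.
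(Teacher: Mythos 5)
Your proof is correct and is essentially the expected argument; since the paper's own proof is simply labelled ``Straightforward,'' there is no distinct method to compare against. You correctly identify the two points that need care in passing from $\unst$ to $\amod$: first, that $St_1$ now takes values in $\field[u^{\pm 1}]\largetensor M$ rather than $\field[u]\otimes M$, and second, that the half-completed tensor allows infinitely many nonzero $u$-coefficients. Both are handled properly: the domain $\field[u]\otimes\Phi M$ consists of genuinely finite sums, so in each internal degree there is a well-defined top $u$-power, and the leading coefficient of $\psi_M(\xi)$ at that top power is $\sum_\alpha c_{j_0,\alpha}x_\alpha$ as you state, because lower-degree $St_1$-contributions cannot reach that slot (they would require $Sq^i$ with $i<0$). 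Surjectivity is indeed immediate since $R_1M$ is defined as the $\field[u]$-submodule generated by the image of $St_1$, i.e.\ by finite $\field[u]$-combinations. The reduction of exactness of $R_1$ to exactness of $\Phi$ and $\field[u]\otimes_\field -$ via the natural $\field[u]$-linear isomorphism is correct, since exactness in $\field[u]\dash\amod$ is detected on underlying graded vector spaces.

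One small expository remark: you might make explicit that naturality of $\psi_M$ in $M$ uses the $\cala$-linearity of the test map $f\colon M\to N$ (so that $f(Sq^i x)=Sq^i f(x)$), even though $\psi_M$ itself is only required to be $\field[u]$-linear.
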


\begin{proof}
A generalization of Proposition \ref{prop:Singer_1_properties}. 
\end{proof}

The higher functors $R_s$ are constructed as before;  the large tensor
product leads to some technical issues. 

Localization inverting the top Dickson invariant gives a commutative diagram
of $\cala$-algebras:
\[
 \xymatrix{
D(s)
\ar[r]
\ar[d]
&
H^* (BV_s) 
\ar[d]
\\
D(s)[\omega_{s,0}^{-1}] 
\ar[r]
&
H^* (BV_s)[\omega_{s,0}^{-1}] 
}
\] 
which, in the case $s=1$, corresponds to $\field[u]\hookrightarrow \field[u^{\pm
1}]$. The localized Dickson algebra 
$D(s)[\omega_{s,0}^{-1}]$ is the appropriate generalization of $\field[u^{\pm
1}]$.

The general Singer functors 
$
 R_s : \amod \rightarrow D(s)\dash\amod
$, come equipped with 
a natural embedding for an $\cala$-module $M$
\[
 R_s M \hookrightarrow D(s)[\omega_{s,0}^{-1}] \largetensor M, 
\]
and are exact. Moreover, they can be constructed from iterates of $R_1$ by
imposing the quadratic relation $R_2$.

\begin{rem}
Care must be taken in considering the composition because of the large tensor
product; see \cite{p_destab} (which is written 
for the odd characteristic case, but the methods also apply over $\field_2$).
\end{rem}

As in the unstable case, one has the following fundamental short exact sequence:

\begin{prop}
\label{prop:ses_Rs_amod} \cite{p_destab}
 For $0<s \in \nat$ and $M$ an $\cala$-module, there is a natural short exact
sequence in $D(s)\dash\amod$:
\[
 0
\rightarrow
\Sigma^{-1} R_s \Sigma M
\rightarrow
R_s M 
\rightarrow 
\Phi R_{s-1} M
\rightarrow 
0.
\]
\end{prop}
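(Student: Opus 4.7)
The plan is to proceed by induction on $s$. For the base case $s = 1$, the key computation is the identity
\[
 \Sigma^{-1} St_1(\sigma x) = u \cdot St_1(x) \qquad \text{in } \field[u^{\pm 1}] \largetensor M,
\]
obtained by expanding $St_1(\sigma x) = \sum_{i \geq 0} u^{|x|+1-i} \otimes \sigma\, Sq^i x$ and factoring out the suspension $\sigma$ via $Sq^i(\sigma x) = \sigma\, Sq^i x$. Since the elements $St_1(\sigma x)$, as $x$ ranges over $M$, generate $R_1 \Sigma M$ as a $\field[u]$-module, this shows $\Sigma^{-1} R_1 \Sigma M = u \cdot R_1 M$ as $\field[u]$-submodules of $\field[u^{\pm 1}] \largetensor M$. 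The short exact sequence then reads $0 \to u R_1 M \to R_1 M \to \Phi M \to 0$, with the identification $R_1 M / u R_1 M \cong \Phi M$ arising from the $\field[u]$-module freeness of $R_1 M$ — the unstable argument goes through verbatim here, since only the $\field[u]$-module structure (not the $\cala$-action) is used.

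For the inductive step, apply the base case to the $\cala$-module $R_{s-1} M$ to obtain
\[
 0 \to \Sigma^{-1} R_1 \Sigma R_{s-1} M \to R_1 R_{s-1} M \to \Phi R_{s-1} M \to 0,
\]
and restrict along the natural embedding $R_s M \hookrightarrow R_1 R_{s-1} M$, which exists because the generators $St_s(x) = St_1(St_{s-1}(x))$ of $R_s M$ lie in $R_1 R_{s-1} M$. The resulting map $R_s M \to \Phi R_{s-1} M$ is $D(s)$-linear via the surjection $D(s) \twoheadrightarrow \Phi D(s-1)$ of unstable algebras, and its surjectivity follows since the generators $\Phi\, St_{s-1}(x)$ of $\Phi R_{s-1} M$ are hit by $St_s(x)$. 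By construction, the kernel is the intersection $R_s M \cap \Sigma^{-1} R_1 \Sigma R_{s-1} M$.

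The main obstacle — and the heart of the argument — is identifying this kernel with $\Sigma^{-1} R_s \Sigma M$. The approach is to use the quadratic characterization $R_s = \bigcap_{a + b + 2 = s} R_1^{\circ a} R_2 R_1^{\circ b}$, applied both to $M$ and to $\Sigma M$, together with the fact that $\Sigma^{-1}$ (an equivalence on $\amod$) commutes with such intersections and with each $R_1$ in the sense already established in the base case. Making this precise requires careful bookkeeping inside the ambient module $D(s)[\omega_{s,0}^{-1}] \largetensor M$ — in particular with the large tensor product $\largetensor$ — and is essentially the content of \cite{p_destab}.
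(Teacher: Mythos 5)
Your base case is correct: the computation $\Sigma^{-1}St_1(\sigma x) = u\, St_1(x)$, hence $\Sigma^{-1}R_1\Sigma M = u R_1 M$ inside $\field[u^{\pm 1}]\largetensor M$, together with the identification $R_1 M / u R_1 M \cong \Phi M$ (which does survive to $\amod$ by a leading-$u$-term argument), gives exactly the $s=1$ sequence. Your inductive framework is also the right one: apply the base case to the $\cala$-module $R_{s-1}M$, restrict along $R_s M \hookrightarrow R_1 R_{s-1} M$ (this inclusion does hold, since $St_s(x) = St_1(St_{s-1}(x))$ and $D(s) \subset R_1 D(s-1)$, although note the paper's stated embedding is $R_s \hookrightarrow R_{s-1} R_1$), and check $D(s)$-linearity and surjectivity of the resulting map onto $\Phi R_{s-1}M$. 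Since the paper itself offers only a citation to \cite{p_destab} for this proposition, your added detail is in the right direction.

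The genuine gap is where you say it is, and it is not a small one. Establishing
\[
 R_s M \cap \Sigma^{-1} R_1 \Sigma R_{s-1} M \;=\; \Sigma^{-1} R_s \Sigma M
\]
(equivalently, that the kernel of $\rho_s$ is $\omega_{s,0} R_s M$, which is the $\amod$-level analogue of the unstable statement) is \emph{the} content of the proposition: everything else is formal. Appealing to the quadratic characterization $R_s = \bigcap_{a+b+2=s} R_1^{\circ a} R_2 R_1^{\circ b}$ and the commutation of $\Sigma^{-1}$ with $R_1$ is a plausible route, but you have not shown that the intersection defining $R_s$ is compatible with passing to $u_s$-multiples and desuspending, nor have you addressed the $GL_s$-invariance needed to see that the kernel is $\omega_{s,0}$-divisible rather than merely $u_s$-divisible, nor the convergence issues with $\largetensor$ when $M$ is not bounded above. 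As written, the proposal reduces the claim to a second claim of comparable difficulty and then defers; that deferral must be filled in (as in \cite{p_destab}) for this to count as a proof.
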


%%%%%%%%%%%%%%%%%%%%%%%%%%%%%%%%%%%%%%%%%%%%%%%%%%%%%%%%%%%%%%%%%%%%%%%%%%%%%ù
\subsection{The Singer differential}

There is a new phenomenon when considering the Singer functors defined on
$\amod$, corresponding to the 
Singer differential.

\begin{prop}
\label{prop:residue}
 The residue, namely the unique non-trivial map of graded vector spaces:
\[
 \partial : \field[u^{\pm 1}] 
\rightarrow 
\Sigma^{-1}
\field ,
\]
 is $\cala$-linear. (Equivalently, $u^{-1}$ is not in the image of $Sq^i$,
$\forall i >0$).
\end{prop}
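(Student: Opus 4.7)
The plan is to verify the equivalent reformulation given in parentheses: $u^{-1}$ does not lie in $\mathrm{im}(Sq^i)$ for any $i>0$. Since $\partial$ is a degree-$0$ map of graded vector spaces and all $Sq^i$ with $i>0$ act as zero on $\Sigma^{-1}\field$ (which is concentrated in a single degree), $\cala$-linearity amounts to $\partial(Sq^i x)=0$ for every $i>0$ and every $x \in \field[u^{\pm 1}]$; by $\field$-linearity it suffices to treat $x = u^n$, and since $Sq^i(u^n)$ is homogeneous of degree $n+i$, the only candidate producing a $u^{-1}$ term is $n = -1-i$.

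First I would compute $Sq^i(u^n)$ for all $n \in \zed$ using the multiplicativity of $Sq^{\mathrm{T}}$. Since $Sq^{\mathrm{T}}(u) = u(1+u)$ and $Sq^{\mathrm{T}}(u^n)Sq^{\mathrm{T}}(u^{-n}) = Sq^{\mathrm{T}}(1)=1$, multiplicativity gives $Sq^{\mathrm{T}}(u^n) = u^n(1+u)^n$ for all $n \in \zed$, with negative powers interpreted via $(1+u)^{-1} = \sum_{j \geq 0} u^j$ inside $\field[u^{\pm 1}]$ (or its appropriate completion, as used in Example \ref{exam:e1}). Extracting the degree-$(n+i)$ component then yields the closed formula $Sq^i(u^n) \equiv \binom{n}{i} u^{n+i} \pmod 2$, where for $n = -m < 0$ one reads $\binom{-m}{i} \equiv \binom{m+i-1}{i} \pmod 2$.

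Specializing to $n = -1-i$, this identity gives $Sq^i(u^{-1-i}) \equiv \binom{2i}{i} u^{-1} \pmod 2$. The proof then reduces to the standard congruence $\binom{2i}{i} \equiv 0 \pmod 2$ for $i \geq 1$; the cleanest justification is Kummer's theorem, which identifies the $2$-adic valuation of $\binom{2i}{i}$ with the number of carries in the base-$2$ addition $i + i$, namely $s_2(i)$, the number of $1$'s in the binary expansion of $i$. For any $i>0$ we have $s_2(i) \geq 1$, so $\binom{2i}{i}$ is even, and the coefficient of $u^{-1}$ in $Sq^i(u^{-1-i})$ vanishes, as required.

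There is no serious obstacle: the only subtlety is making sense of $Sq^{\mathrm{T}}$ multiplicativity on negative powers, which is handled once one works inside $\field[u^{\pm 1}]$ (and, as noted, a half-completion if one wishes to track things degree by degree in the ambient $\cala$-module); beyond that the proof is entirely elementary.
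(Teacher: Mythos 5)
Your proposal is correct and supplies exactly the details the paper leaves to the reader (its ``proof'' is just the remark ``Straightforward (and important)''). The reduction is right: since $\Sigma^{-1}\field$ is concentrated in degree $-1$ with trivial $\cala$-action, $\cala$-linearity of $\partial$ amounts to $\partial(Sq^i u^n)=0$ for all $i>0$, and by degree reasons only $n=-1-i$ needs checking. Your computation $Sq^i(u^n)=\binom{n}{i}u^{n+i}$ (from multiplicativity of the total power, $Sq^{\mathrm{T}}(u^n)=u^n(1+u)^n$) is valid for all $n\in\zed$ once one reads $(1+u)^{-1}=\sum_{j\ge 0}u^j$ as a formal device for extracting the individual $Sq^i$'s, each of which lands honestly in $\field[u^{\pm 1}]$; this is the same device used in Example~\ref{exam:e1}. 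The reduction $\binom{-1-i}{i}\equiv\binom{2i}{i}\pmod 2$ is correct, and the Kummer/Lucas argument that $\binom{2i}{i}$ is even for every $i>0$ (number of carries in $i+i$ is $s_2(i)\ge 1$) finishes it. In fact your calculation shows the slightly stronger fact that $Sq^i(u^{-1-i})=0$ outright in $\field[u^{\pm 1}]$ for all $i>0$, not merely that its residue vanishes. One could phrase the final step without Kummer by Lucas directly: $\binom{2i}{i}$ is odd iff every binary digit of $i$ is $\le$ the corresponding digit of $2i$, which forces $i=0$ by a trivial induction on digits; but that is a matter of taste, not substance.
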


\begin{proof}
Since the Steenrod algebra $\cala$ is generated by $\{ Sq^{2^n} |0<  n \in 
\nat\}$, it suffices 
to show that $Sq^{2^n} (u^{-(2^n +1)}) =0$ for all $0< n\in \nat$. As in Example 
\ref{exam:e1}, the Steenrod total square acts via 
$Sq^{\mathrm{T}} (u^{-1}) = \frac{u^{-1}} {1+u}$, hence, by multiplicativity of 
$Sq^\mathrm{T}$ and using the hypothesis $n>0$,
\[
 Sq^{\mathrm{T}} (u^{-(2^n+1)})= \Big(\frac{u^{-1}} {1+u}\Big) 
\Big(\frac{u^{-2^n}}{1+ u^{2^n}}\Big) = \Big(\frac{u^{-1}} {1+u}\Big)  \big 
(u^{-2^n} + 1 + \ldots \big ).
\]
It follows  that the term in degree $-1$ is zero, as required.
\end{proof}

\begin{defn}
\label{def:dM}
For $M$ an $\cala$-module, let $d_M: R_1 M \rightarrow \Sigma^{-1} M $ denote
the composite natural transformation:
\[
 R_1 M 
\hookrightarrow
\field[u^{\pm 1}] 
\largetensor M
\stackrel{\partial \largetensor M}{\longrightarrow}
\Sigma^{-1}M.
\]
\end{defn}

\begin{exer}
Show that, if $M$ is unstable, then $d_M : R_1 M \rightarrow \Sigma^{-1}M$ is 
trivial.
\end{exer}

The following result is the basis for building the chain complex calculating the
derived functors of destabilization.

\begin{prop}
\label{prop:coker_SigmaD}
For $M$ an $\cala$-module, the cokernel of $\Sigma d_M : \Sigma R_1 M
\rightarrow M$ is $DM$. 
\end{prop}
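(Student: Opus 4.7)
The plan is to compute the image of $\Sigma d_M$ directly on generators and recognize it as the subspace $BM$ from the exercise in Section 2.3, so that the cokernel is $M/BM \cong DM$.

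First I would unwind the definition. For $x \in M$,
\[
St_1(x) = \sum_{i \geq 0} u^{|x|-i} \otimes Sq^i(x) \in \field[u^{\pm 1}] \largetensor M,
\]
and the residue $\partial$ extracts the coefficient of $u^{-1}$. This coefficient comes from the term with $|x|-i = -1$, that is, $i = |x|+1$, giving
\[
d_M(St_1(x)) = Sq^{|x|+1}(x) \in \Sigma^{-1}M.
\]
More generally, since multiplication by $u^k$ shifts the power of $u$, for any $k \geq 0$ we obtain
\[
d_M(u^k \cdot St_1(x)) = Sq^{|x|+k+1}(x) \in \Sigma^{-1}M.
\]

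Next I would use the fact that, by construction, $R_1 M$ is the sub $\field[u]$-module of $\field[u^{\pm 1}] \largetensor M$ generated by $\{St_1(x) \mid x \in M\}$; hence every element of $R_1 M$ is a $\field[u]$-linear combination of such $St_1(x)$. Applying $\Sigma d_M$ (which, after suspending, lands in $M$ itself) shows that the image is spanned by elements of the form $Sq^j(x)$ with $x \in M$ and $j > |x|$; conversely, given any such generator $Sq^j(x)$ with $j > |x|$, setting $k := j - |x| - 1 \geq 0$ realizes it as $d_M(u^k \cdot St_1(x))$ (after suspension). Thus the image of $\Sigma d_M$ equals the linear span
\[
\langle Sq^i(x) \mid x \in M,\ i > |x|\rangle = BM,
\]
which is an $\cala$-submodule by the exercise. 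The cokernel is therefore $M/BM \cong DM$, as claimed.

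The only mildly delicate point is making sure that the formula $d_M(u^k St_1(x)) = Sq^{|x|+k+1}(x)$ captures \emph{all} elements in the image of $\Sigma d_M$: this relies on the characterization of $R_1 M$ as a $\field[u]$-module generated by the $St_1(x)$'s (together with $\cala$-linearity of $d_M$, which ensures the image is automatically $\cala$-stable, matching the fact that $BM$ is an $\cala$-submodule). The large tensor product $\largetensor$ does not cause difficulty here because $\partial$ is evaluation at a single monomial $u^{-1}$, so each $d_M(u^k St_1(x))$ is a single well-defined element of $M$.
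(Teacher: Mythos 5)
Your computation is correct and is precisely the "straightforward" verification the paper has in mind. Extracting the $u^{-1}$-coefficient of $u^k St_1(x)$ gives $Sq^{|x|+k+1}(x)$; letting $k$ range over $\nat$ shows the image of $\Sigma d_M$ is exactly $BM = \langle Sq^i(x) \mid i > |x|\rangle$, and $M/BM \cong DM$ by the exercise. Your closing remark about the large tensor product is the right thing to flag: since $\largetensor$ is completed only in the direction of decreasing powers of $u$, the $u^{-1}$-coefficient of any element of $R_1 M$ is a single well-defined element of $M$, so $d_M$ lands in $\Sigma^{-1} BM$ and no convergence issue arises.
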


\begin{proof}
This is left as a fundamental exercise for the reader.
\end{proof}

\section{Constructing chain complexes}

Recall from Section \ref{subsect:Rs_amod} that $R_s : \amod \rightarrow
D(s)\dash\amod$ is an exact functor and that there is a natural differential
$d_M: R_1 M \rightarrow \Sigma^{-1}M$ for $M$ an $\cala$-module (see Definition 
\ref{def:dM}). Moreover,
there is a natural inclusion 
$R_s \hookrightarrow R_{s-1}R_1$. These are the key ingredients to constructing
the chain complexes which calculate the derived 
functors of destabilization and of iterated loop functors.

%%%%%%%%%%%%%%%%%%%%%%%%%%%%%%%%%%%%%%%%%%%%%%%%%%%%%%%%%%%%%%%%%%%%%%%%
\subsection{Destabilization}
\label{subsect:destab_chcx}

\begin{defn}
For $M$ an $\cala$-module and $1\leq s \in \zed$, let
 $d_{s,M} : R_s M \rightarrow R_{s-1}(\Sigma^{-1}M) $ denote the natural
morphism given as the composite:
\[
\xymatrix{
 R_s M \ar@{^(->}[r]& 
R_{s-1}R_1 M 
\ar[rr]^{R_{s-1}d_M}&&
R_{s-1} (\Sigma^{-1}M),
}
\]
so that $d_{1,M}$ identifies with $d_M$. 
\end{defn}

\begin{prop}
 \label{prop:d_squared}
For $M$ an $\cala$-module and $s \in \nat$, the composite 
\[
\xymatrix@C=5pc{
 R_{s+2} (M) 
\ar[r]^{d_{s+2,M}}
&
R_{s+1}(\Sigma^{-1}M) 
\ar[r]^{d_{s+1,\Sigma^{-1}M}}
&
R_s (\Sigma^{-2}M)
}
\]
is trivial. 
\end{prop}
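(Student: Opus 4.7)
The plan is to reduce the statement for general $s$ to the case $s=0$ by invoking the quadratic structure of the Singer functors, and then to verify the vanishing directly by interpreting the composite as a double residue and identifying it with an Adem-null monomial.

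For the reduction, the quadratic identity $R_{s+2}=\bigcap_{a+b=s}R_1^{\circ a}R_2R_1^{\circ b}$ yields in particular a natural inclusion $R_{s+2}\hookrightarrow R_sR_2$. Combining this with the definition $d_{k,M}=(R_{k-1}d_M)\circ(R_k\hookrightarrow R_{k-1}R_1)$, a routine diagram chase (using the naturality of the inclusions $R_k\hookrightarrow R_{k-1}R_1$ and the functoriality of $R_s$ applied to $\cala$-linear maps) identifies $d_{s+1,\Sigma^{-1}M}\circ d_{s+2,M}$ with the composite
\[
R_{s+2}M\hookrightarrow R_sR_2M\xrightarrow{R_s(c_M)}R_s\Sigma^{-2}M,
\]
where $c_M:=d_{1,\Sigma^{-1}M}\circ d_{2,M}$ is the case $s=0$. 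Since $R_s$ is exact and therefore preserves zero maps, it suffices to show $c_M=0$.

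For the base case, unfolding the definitions identifies $c_M:R_2M\to\Sigma^{-2}M$ with the \emph{double residue}: viewed through $R_2M\hookrightarrow R_1R_1M\subset\field[u_1^{\pm1},u_2^{\pm1}]\largetensor M$, it extracts the coefficient of $u_1^{-1}u_2^{-1}$. On the canonical generator $St_2(x)=St_1(St_1(x))$, the multiplicativity of $St_1$ together with the identity $St_1(u_1)=u_1(u_1+u_2)$ and the Cartan formula allow one to collect mod-$2$ binomial coefficients and find that $c_M(St_2(x))=Sq^{2|x|+1}Sq^{|x|+1}(x)$. This vanishes in $\cala$: every coefficient in the Adem expansion of $Sq^{2n+1}Sq^{n+1}$ is divisible by $2$, as a short check shows.

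The main obstacle is extending this verification from the $St_2(x)$-generators to the full module $R_2M$: since $c_M$ is $\cala$-linear but the $St_2(x)$ only generate $R_2M$ as a $D(2)$-module, vanishing on generators does not propagate automatically. The expected remedy is to exploit the short exact sequence $0\to\Sigma^{-1}R_2\Sigma M\to R_2M\to\Phi R_1M\to 0$ of Proposition \ref{prop:ses_Rs_amod}: the composite $c_M$ kills the $\Phi R_1M$-quotient because the total Steenrod power on any $\Phi$-class contains only even powers of the formal variable and hence has trivial residue, reducing the vanishing on $R_2M$ to the corresponding statement on $\Sigma^{-1}R_2\Sigma M$ and ultimately, by iteration, to the Adem-null calculation on generators.
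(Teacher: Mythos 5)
Your reduction to $s=0$ via the quadratic structure $R_{s+2}\hookrightarrow R_sR_2$ and exactness of $R_s$ is essentially the paper's first step, which it also treats as straightforward. For the base case $s=0$, however, the paper proceeds differently: rather than compute the double residue directly, it embeds $\dcx_\bullet M$ into the mod-$2$ analogue of the Hung--Sum chain complex $\Gamma_\bullet M$, which is already known to satisfy $d^2=0$, and inherits the vanishing from there. Your direct computation of $c_M$ on the $D(2)$-generators is correct, and the Adem-null check for $Sq^{2n+1}Sq^{n+1}$ holds (the constraints $j\leq n$ and $j\geq n+1$ on the binomial coefficient $\binom{n-j}{2n+1-2j}$ are incompatible, so every term vanishes). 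You are also right to flag that this is insufficient: the residue is not $D(2)$-linear onto the trivial $D(2)$-module $\Sigma^{-2}M$, so vanishing on the $St_2(x)$ does not propagate.

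The patch you propose does not yet close that gap, and as stated it is circular. Saying that ``$c_M$ kills the $\Phi R_1M$-quotient'' has the logic inverted: $\Phi R_1M$ is a quotient of $R_2M$, so the only sense in which a map out of $R_2M$ interacts with it is by \emph{factoring through} it, and $c_M$ factors through $\Phi R_1M$ precisely when $c_M$ vanishes on the submodule $\Sigma^{-1}R_2\Sigma M$ --- which, up to suspension, is the statement $c_{\Sigma M}=0$ you are trying to prove. Iterating the short exact sequence therefore produces an infinite regress unless you add two ingredients you have not supplied: (i) a connectivity argument showing that $\Sigma^{-n}R_2\Sigma^nM$ eventually vanishes in any fixed degree when $M$ is bounded below (together with the reduction to bounded-below $M$), so that the induction terminates degreewise; and (ii) a verification that the restriction of $c_M$ to $\Sigma^{-1}R_2\Sigma M$ really is $\Sigma^{-1}c_{\Sigma M}$, which requires checking compatibility of the double residue with the short exact sequence of Proposition \ref{prop:ses_Rs_amod} \emph{before} knowing $d^2=0$ (the analogous statement in Proposition \ref{prop:destab_ses_ch_cx} is only available once $\dcx$ is a complex). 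Your observation that the residue annihilates $\Phi$-classes because $St_1$ on a $\Phi$-class involves only even powers of $u$ is the correct ingredient for handling the associated graded, but it needs to be packaged as above. The paper's route via $\Gamma_\bullet M$ avoids all of this bookkeeping at the cost of importing an external result.
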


\begin{proof}
(Indications. See \cite{p_destab} for a proof in odd characteristic; the 
method adapts to $\field_2$.) Using the
quadratic nature of the functors $R_s$, it is straightforward to reduced to the
case $s=0$.

This case can be proved using the relationship between the Steenrod algebra and
invariant theory, as in the work of Singer \cite{singer_invt_lambda}; 
one method is to embed the diagram in the $\field_2$-analogue of the chain 
complex
$\Gamma _\bullet M$ considered by Nguy{\~\ecircumflex}n H. V. H{\uhorn}ng  and  
Nguy{\~\ecircumflex}n Sum
\cite{hung_sum} (their arguments adapt to characteristic two).
\end{proof} 

Recall that the category of chain complexes for an abelian category is abelian.

\begin{cor}
 There is an exact functor $\dcx : \amod \rightarrow \chcx (\amod)$ with values
in 
$\nat$-graded chain complexes defined on an $\cala$-module $M$  by 
\begin{eqnarray*}
 \dcx_n M &: =& \Sigma R_s (\Sigma^{s-1} M) \\
d_n : \dcx_n M \rightarrow \dcx_{n-1} M &:=&  \Sigma d_{s,\Sigma^{s-1}M}.
\end{eqnarray*}
\end{cor}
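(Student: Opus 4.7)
The plan is to verify two things: that the prescribed data actually forms a chain complex (i.e.\ $d^2=0$), and that the resulting functor $\dcx(-) : \amod \rightarrow \chcx(\amod)$ is exact.

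First I would unpack the indexing. With $\dcx_s M = \Sigma R_s(\Sigma^{s-1}M)$, the differential $d_s$ is $\Sigma$ applied to
\[
d_{s,\Sigma^{s-1}M} : R_s(\Sigma^{s-1}M) \longrightarrow R_{s-1}(\Sigma^{-1}\Sigma^{s-1}M) = R_{s-1}(\Sigma^{(s-1)-1}M),
\]
so the target is indeed $\dcx_{s-1}M$ once we prepend $\Sigma$. The composite $d_{s-1}\circ d_s$ is therefore $\Sigma$ applied to
\[
d_{s-1,\Sigma^{s-2}M}\circ d_{s,\Sigma^{s-1}M} : R_s(\Sigma^{s-1}M) \longrightarrow R_{s-2}(\Sigma^{s-3}M),
\]
which is exactly the shape of the composite treated in Proposition \ref{prop:d_squared} (applied to the module $\Sigma^{s-1}M$, with index $s-2$ in the role of the earlier $s$). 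Hence $d^2=0$ and $\dcx M$ is a well-defined chain complex.

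Naturality is automatic from the construction: $R_s$ is functorial, $\Sigma$ and $\Sigma^{s-1}$ are functors, and $d_{s,M}$ was defined as a natural transformation in $M$, so each $\dcx_s$ is a functor and each $d_s$ is a natural transformation. This makes $M\mapsto \dcx M$ a functor to $\chcx(\amod)$.

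For exactness, I would argue term by term. Each suspension functor $\Sigma^k : \amod \to \amod$ ($k\in \zed$) is exact, and the Singer functor $R_s : \amod \to D(s)\text{-}\amod$ is exact (as noted in Section \ref{subsect:Rs_amod}); composing with the forgetful functor to $\amod$ preserves exactness. Thus each $\dcx_s = \Sigma R_s \Sigma^{s-1}$ is an exact functor $\amod\to\amod$. Given a short exact sequence $0\to M'\to M\to M''\to 0$ in $\amod$, applying $\dcx_s$ degreewise gives a short exact sequence in each homological degree, and naturality of the $d_s$ means the differentials intertwine. Therefore $0\to \dcx M'\to \dcx M\to \dcx M''\to 0$ is a short exact sequence in $\chcx(\amod)$, proving that $\dcx$ is an exact functor.

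The only step that requires any content is the verification that the composite differential vanishes, and this is immediate from Proposition \ref{prop:d_squared}; the rest is bookkeeping about indices and formal consequences of the exactness already recorded for $R_s$ and $\Sigma$.
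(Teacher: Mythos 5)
Your argument is correct and follows the same route as the paper's proof: $d^2=0$ comes from Proposition \ref{prop:d_squared} (after the reindexing you carefully write out), and exactness is deduced degreewise from exactness of $R_s$ and $\Sigma$. The only difference is that you spell out the index-matching and the naturality/termwise-exactness bookkeeping, which the paper leaves implicit.
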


\begin{proof}
 Proposition \ref{prop:d_squared} implies that $\dcx M$ is a chain complex and
the construction is functorial. 
 Since $R_s : \amod \rightarrow \amod$ is an exact functor (forgetting the
action of $D(s)$) and $\Sigma$ is exact, the functor $\dcx$ is exact.
\end{proof}

As shown by the work of Singer on the derived functors of iterated loop functors
\cite{singer_loops_II}, a key input to the proof of the main result is to have 
a 
short exact 
sequence of
complexes which gives rise to the long exact sequence of derived functors of
destabilization. 

\begin{nota}
For $M$ an $\cala$-module and $s \in \nat$, let $\dhom_s M$ denote $H_s
(\dcx_\bullet M)$, so that $\dhom_0 M= D M$, by 
Proposition \ref{prop:coker_SigmaD}.
\end{nota}

\begin{prop}
\label{prop:destab_ses_ch_cx}
For $M$ an $\cala$-module, there is a natural short exact sequence of chain
complexes:
\[
 0
\rightarrow 
\Sigma^{-1} \dcx_\bullet (\Sigma M) 
\rightarrow  
\dcx_\bullet M 
\rightarrow 
\Sigma^{-1} \Phi \dcx_{\bullet -1} (\Sigma M) 
\rightarrow 
0.
\]
Moreover, in homology this induces a long exact sequence in $\amod$:
\[
 \ldots 
\rightarrow 
\Sigma^{-1} \dhom_s (\Sigma M) 
\rightarrow 
\dhom _s M 
\rightarrow 
\Sigma^{-1}
\Phi \dhom_{s-1}(\Sigma M)  
\stackrel{\lambda_{s-1}}{\longrightarrow} 
\Sigma^{-1} \dhom _{s-1}(\Sigma M)
\rightarrow 
\ldots .
\]
The connecting morphism $\lambda_0$ identifies with $\Sigma^{-1} \lambda _{DM}$,
using the identification $\dhom _0 M = DM$.
\end{prop}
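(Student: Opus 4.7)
The strategy is to establish the short exact sequence of chain complexes degree by degree, by applying $\Sigma$ to the short exact sequence of Proposition~\ref{prop:ses_Rs_amod}, then to verify compatibility with the Singer differentials, and finally to unwind the snake lemma in order to identify~$\lambda_0$.

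First, using $\Phi\Sigma\cong\Sigma^2\Phi$, the three complexes have $s$-th terms
\[
(\Sigma^{-1}\dcx_\bullet(\Sigma M))_s = R_s\Sigma^s M, \quad (\dcx_\bullet M)_s = \Sigma R_s\Sigma^{s-1}M, \quad (\Sigma^{-1}\Phi\dcx_{\bullet-1}(\Sigma M))_s = \Sigma\Phi R_{s-1}\Sigma^{s-1}M,
\]
and the level-wise short exact sequence is obtained by applying $\Sigma$ to the short exact sequence of Proposition~\ref{prop:ses_Rs_amod} with $N=\Sigma^{s-1}M$.

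The crux of the argument is to check that these level-wise sequences assemble into a short exact sequence of chain complexes; equivalently, that for every $\cala$-module $N$ the diagram
\[
\xymatrix{
0 \ar[r] & \Sigma^{-1}R_s\Sigma N \ar[r] \ar[d]_{\Sigma^{-1}d_{s,\Sigma N}} & R_s N \ar[r] \ar[d]_{d_{s,N}} & \Phi R_{s-1}N \ar[r] \ar[d]^{\Phi(d_{s-1,N})} & 0 \\
0 \ar[r] & \Sigma^{-1}R_{s-1}N \ar[r] & R_{s-1}\Sigma^{-1}N \ar[r] & \Phi R_{s-2}\Sigma^{-1}N \ar[r] & 0
}
\]
commutes, the rows being given by Proposition~\ref{prop:ses_Rs_amod}. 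Unwinding the definition $d_{s,N}\colon R_sN\hookrightarrow R_{s-1}R_1N \stackrel{R_{s-1}(d_N)}{\longrightarrow} R_{s-1}\Sigma^{-1}N$, and using the exactness of $R_{s-1}$ together with the naturality of the filtration, this reduces to the case $s=1$; the latter follows by a direct computation with the Singer residue $\partial\colon\field[u^{\pm 1}]\to\Sigma^{-1}\field$ and the canonical filtration of $R_1 N$ provided by Proposition~\ref{prop:ses_Rs_amod}. This compatibility is the main technical obstacle.

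Granted the short exact sequence of complexes, the long exact sequence in homology is the snake lemma. To identify $\lambda_0\colon\Sigma^{-1}\Phi\dhom_0(\Sigma M)\to\Sigma^{-1}\dhom_0(\Sigma M)$, apply the usual recipe: a class represented by $\Sigma\Phi x\in\Sigma\Phi M = \Sigma^{-1}\Phi\dcx_0(\Sigma M)$ lifts, via the projection $\Sigma\rho_1\colon\Sigma R_1M\twoheadrightarrow\Sigma\Phi M$, to $\Sigma\, St_1(x)\in\Sigma R_1M=\dcx_1 M$; the differential $\Sigma d_M$ extracts the residue, yielding $Sq^{|x|}x=\lambda_M(\Phi x)\in M=\dcx_0 M$. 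Under the identification $\dhom_0=D$ this recovers precisely $\Sigma^{-1}\lambda_{DM}$, as asserted.
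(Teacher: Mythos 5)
Your overall strategy — use the short exact sequence of Proposition~\ref{prop:ses_Rs_amod} termwise, check compatibility with the differentials, then apply the snake lemma and unwind the connecting map — is exactly the paper's (which gives only indications). Two points, one minor and one a genuine error.

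The minor point: the compatibility of the termwise short exact sequences with the Singer differential is indeed the crux, but I do not think it ``reduces to the case $s=1$'' by simply invoking exactness of $R_{s-1}$. Note that the quotient term in the $R_s$ sequence is $\Phi R_{s-1}$, obtained by killing the \emph{outermost} $R_1$ in $R_s\hookrightarrow R_1R_{s-1}$, whereas $d_{s,N}$ acts on the \emph{innermost} $R_1$ via $R_s\hookrightarrow R_{s-1}R_1$; applying $R_{s-1}$ to the $s=1$ sequence produces a quotient $R_{s-1}\Phi$, not $\Phi R_{s-1}$. For $s\geq 2$ the required square commutes because the two operations act on distinct tensor ``slots'' (one should embed in $R_1R_{s-2}R_1$ and use naturality of $\rho_1$ there), which is closer in spirit to the paper's ``naturality of the construction'' than to an induction collapsing to $s=1$.

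The genuine error is in the identification of $\lambda_0$. From $St_1(x)=\sum_i u^{|x|-i}\otimes Sq^i(x)$, the Singer residue picks out the coefficient of $u^{-1}$, which occurs at $i=|x|+1$, so
\[
d_M\bigl(St_1(x)\bigr)=Sq^{|x|+1}(x),
\]
\emph{not} $Sq^{|x|}(x)=\lambda_M(\Phi x)$ as you wrote; indeed your formula is inconsistent with Proposition~\ref{prop:coker_SigmaD}, since $\mathrm{coker}(\Sigma d_M)$ must be $M/BM$ and $BM$ is generated by the $Sq^j(x)$ with $j>|x|$. The extra $+1$ is exactly what makes the identification work once you track the suspension carefully: under $\Sigma^{-1}\Phi\Sigma\cong\Sigma\Phi$ the representative $\Sigma\Phi x$ corresponds to $\Sigma^{-1}\Phi(\sigma x)$ with $|\sigma x|=|x|+1$, and
\[
\Sigma^{-1}\lambda_{D(\Sigma M)}\bigl(\Sigma^{-1}\Phi(\sigma x)\bigr)
= \Sigma^{-1}\bigl(Sq^{|x|+1}(\sigma x)\bigr)
= Sq^{|x|+1}(x),
\]
agreeing with the snake-lemma output. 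So the connecting morphism is $\Sigma^{-1}\lambda_{D(\Sigma M)}$ (the paper's ``$\Sigma^{-1}\lambda_{DM}$, using $\dhom_0 M = DM$'' is shorthand for this), and your intermediate step should be corrected accordingly.
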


\begin{proof} 
Indications. (Cf. \cite{p_destab}, which treats odd characteristic.)
 The first statement follows from the naturality of the construction of the
chain complex of Proposition \ref{prop:ses_Rs_amod} 
and of the differential. 

For the final statement, the long exact sequence is the long exact sequence in
homology, using the exactness of the functors 
$\Sigma$ and $\Phi$. The identification of $\lambda_0$ is  straightforward.
\end{proof}

\begin{lem}
 \label{lem:conn_dhom}
For $M$ an $\cala$-module and $s \in \nat$, 
$
 \conn (\dhom_s M) \geq 2^s (\conn M + s). 
$
\end{lem}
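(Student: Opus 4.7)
The plan is to prove the connectivity bound directly by analyzing the chain module $\dcx_s M$, exploiting the fact that $\dhom_s M = H_s(\dcx_\bullet M)$ is a subquotient of $\dcx_s M$ and hence automatically satisfies $\conn(\dhom_s M) \geq \conn(\dcx_s M)$. Since $\dcx_s M = \Sigma R_s(\Sigma^{s-1} M)$, it is enough to establish the auxiliary estimate
$$\conn(R_s N) \geq 2^s (\conn N + 1) - 1$$
for any $\cala$-module $N$, and then specialize to $N = \Sigma^{s-1} M$ (for which $\conn N = \conn M + s - 1$); this will yield $\conn(\dcx_s M) = 1 + \conn(R_s(\Sigma^{s-1}M)) \geq 2^s(\conn M + s)$, which is the bound claimed.

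To obtain the auxiliary estimate, I plan to use the explicit description of $R_s N$ as the sub $D(s)$-module of $D(s)[\omega_{s,0}^{-1}] \largetensor N$ generated by the elements $St_s(\Phi^s x)$ as $x$ ranges over $N$. Each such generator is homogeneous of degree $|\Phi^s x| = 2^s |x|$, so the lowest-degree generator has degree $2^s(\conn N + 1)$. Since every Dickson invariant $\omega_{s,i}$ with $0 \leq i \leq s-1$ has strictly positive degree $2^s - 2^i \geq 2^{s-1} > 0$, the algebra $D(s)$ is concentrated in non-negative degrees; hence every $D(s)$-linear combination of the generators $St_s(\Phi^s x)$ lies in degree $\geq 2^s(\conn N + 1)$, which yields exactly the desired bound $\conn(R_s N) \geq 2^s(\conn N + 1) - 1$.

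The main obstacle in this plan is justifying the generation statement for $R_s$ applied to an arbitrary $\cala$-module, as opposed to an unstable module. In the unstable setting the description of $R_s$ as the $D(s)$-submodule generated by $St_s$ of elements is made explicit in the text, but for a general $\cala$-module the ambient object $D(s)[\omega_{s,0}^{-1}] \largetensor N$ contains elements of arbitrarily negative degree (through negative powers of the top Dickson invariant), so one must check that no such negative-degree elements creep into $R_s N$ itself. This should follow from the inductive construction $R_s = \bigcap_{a+b+2=s} R_1^{\circ a} R_2 R_1^{\circ b}$ applied to the defining property of $R_1$ on an $\cala$-module (the sub $\field[u]$-module, not sub $\field[u^{\pm 1}]$-module, of $\field[u^{\pm 1}] \largetensor N$ generated by $St_1(\Phi N)$), together with the observation that an intersection of $D(s)$-submodules generated by images of iterated total Steenrod powers is again $D(s)$-generated by $St_s(\Phi^s N)$. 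Once this structural point is in hand, the degree estimate is immediate and the lemma follows.
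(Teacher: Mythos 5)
Your proposal is correct, and the arithmetic checks out: $\dhom_s M$ is a subquotient of $\dcx_s M = \Sigma R_s(\Sigma^{s-1}M)$, so the lemma reduces to the estimate $\conn(R_s N) \geq 2^s(\conn N + 1) - 1$; plugging in $N = \Sigma^{s-1}M$ and adding $1$ for the suspension yields $2^s(\conn M + s)$ exactly. The paper marks the proof ``Straightforward'' and gives no details, so there is nothing to compare against directly.

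One remark on your route to the $R_s$-connectivity bound: you correctly flag the subtlety of asserting $D(s)$-generation by $St_s$-images for a general $\cala$-module, and you can sidestep it entirely. Since $R_s N$ is \emph{defined} (or at least comes equipped with a natural embedding) as a submodule of the iterate $R_1^{\circ s} N$, it suffices to estimate $\conn(R_1^{\circ s} N)$, and this only requires the one-variable statement. For $M \in \amod$, the functor $R_1 M$ is by construction the sub-$\field[u]$-module of $\field[u^{\pm 1}]\largetensor M$ generated by the elements $St_1(\Phi x)$, each homogeneous of degree $2|x|$; since $\field[u]$ is concentrated in non-negative degrees, every element of $R_1 M$ has degree $\geq 2(\conn M + 1)$, i.e.\ $\conn(R_1 M) \geq 2\conn M + 1 = \conn(\Phi M)$, matching Lemma \ref{lem:conn_Phi}. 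Setting $a_s = \conn(R_1^{\circ s} N)$, the recursion $a_s \geq 2a_{s-1}+1$ gives $a_s \geq 2^s(\conn N + 1)-1$, and the containment $R_s N \subset R_1^{\circ s} N$ transfers this to $R_s$. This avoids any appeal to the intersection description $R_s = \bigcap R_1^{\circ a} R_2 R_1^{\circ b}$ or to $D(s)$-module generation over the large tensor product, which is where the technical care is really needed in the $\amod$ setting.
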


\begin{proof}
 Straightforward.
\end{proof}

\begin{prop}
 \label{prop:vanish_proj_gen}
$\dhom_s (\Sigma^t \cala ) =0$ $\forall t \in \zed$ and $ 0< s \in \nat$.
\end{prop}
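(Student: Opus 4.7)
The plan is to prove the vanishing by induction on $s\geq 1$, combining the long exact sequence of Proposition~\ref{prop:destab_ses_ch_cx} with the connectivity bound of Lemma~\ref{lem:conn_dhom}. The underlying idea is that the long exact sequence relates $\dhom_s(\Sigma^t\cala)$ to $\dhom_s(\Sigma^{t+1}\cala)$ and $\dhom_{s-1}(\Sigma^{t+1}\cala)$, so by iterating and pushing $t$ upward, the connectivity estimate $\conn(\dhom_s(\Sigma^{t+k}\cala))\geq 2^s(t+k+s-1)$ eventually forces vanishing in any fixed internal degree.

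For the base case $s=1$, the key input is that $\dhom_0(\Sigma^{t+1}\cala)=F(t+1)$ is free, hence reduced, so the connecting morphism $\lambda_0=\Sigma^{-1}\lambda_{F(t+1)}$ is injective. Reading off the long exact sequence at position $s=1$ then yields a surjection
\[
\Sigma^{-1}\dhom_1(\Sigma^{t+1}\cala)\twoheadrightarrow \dhom_1(\Sigma^t\cala).
\]
For the inductive step $s\geq 2$, the induction hypothesis furnishes $\dhom_{s-1}(\Sigma^{t+1}\cala)=0$, and the long exact sequence gives the analogous surjection
\[
\Sigma^{-1}\dhom_s(\Sigma^{t+1}\cala)\twoheadrightarrow \dhom_s(\Sigma^t\cala).
\]

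Iterating $k$ times and working in a fixed internal degree $n$, one obtains a surjection $(\dhom_s(\Sigma^{t+k}\cala))^{n+k}\twoheadrightarrow(\dhom_s(\Sigma^t\cala))^n$. Since $\conn(\Sigma^{t+k}\cala)=t+k-1$, Lemma~\ref{lem:conn_dhom} gives $\conn(\dhom_s(\Sigma^{t+k}\cala))\geq 2^s(t+k+s-1)$; because $2^s>1$ for $s\geq 1$, this bound dominates $n+k$ once $k$ is sufficiently large, killing the source of the surjection and hence $(\dhom_s(\Sigma^t\cala))^n$.

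The main obstacle is bookkeeping rather than substance: one must carefully track the internal degree shifts (the $\Sigma^{-1}$ factors and the homological shift $\bullet-1$ in Proposition~\ref{prop:destab_ses_ch_cx}) and formulate the induction hypothesis uniformly in $t\in\zed$, so that $\dhom_{s-1}(\Sigma^{t+1}\cala)$ is available at the shifted index needed to propagate the surjection. The decisive quantitative input is the exponential growth in $s$ of the connectivity bound, which outpaces the linear accumulation of degree shifts generated by iterating the surjection.
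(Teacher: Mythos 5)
Your proposal is correct and follows the same route as the paper: induction on $s$, base case via the reducedness of $F(t+1)$ forcing injectivity of the connecting map $\lambda_0$, inductive step via the vanishing of $\dhom_{s-1}(\Sigma^{t+1}\cala)$, and in both cases the iterated surjection $\Sigma^{-1}\dhom_s(\Sigma^{t+1}\cala)\twoheadrightarrow\dhom_s(\Sigma^t\cala)$ killed by Lemma~\ref{lem:conn_dhom}. The only difference is that you spell out explicitly the degree bookkeeping (the $+k$ shift versus the $2^s$-linear connectivity bound) that the paper leaves implicit in "$\conn\to\infty$".
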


\begin{proof}
 For $s=1$ and $t\in\zed$ recall that $D (\Sigma^{t+1} \cala)= F(t+1)$, so that
the
long exact sequence of Proposition \ref{prop:destab_ses_ch_cx}
 is of the following form:
\[
 \ldots 
\rightarrow 
\Sigma^{-1} \dhom _1 (\Sigma^{t+1} \cala) 
\stackrel{\alpha_{t+1}}{\rightarrow} 
\dhom_1 (\Sigma^t \cala) \rightarrow 
\Sigma^{-1}\Phi F(t+1) 
\stackrel{\Sigma^{-1}\lambda}{\rightarrow}
\Sigma^{-1}F (t+1) \rightarrow \ldots . 
\]
The morphism $\lambda$ is injective, since $F(t+1)$ is reduced, hence the
morphism $\alpha_{t+1}$ is surjective. 
 Since $\conn (\dhom_1 (\Sigma^{t+1} \cala)) \rightarrow \infty $ as $t
\rightarrow \infty$, by Lemma \ref{lem:conn_dhom}, it follows that $\dhom_1
(\Sigma^t \cala)=0$ $ \forall t
\in\zed$.

This forms the initial step of an induction upon $s$; the inductive step is
similar (but easier).
\end{proof}

\begin{thm}
\label{thm:deriv_destab}
For $M$ an $\cala$-module, there is a natural isomorphism 
$$H_s (\dcx M) \cong D_s M .
$$
\end{thm}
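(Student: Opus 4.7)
The plan is to compare $\dhom_\bullet M$ and $D_\bullet M$ via a standard double-complex / two-spectral-sequences argument, exploiting the two properties of $\dcx$ already established: exactness of each $\dcx_n$ as a functor on $\amod$, and the vanishing $\dhom_s(\Sigma^t \cala)=0$ for $s>0$ proved in Proposition \ref{prop:vanish_proj_gen}.

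First I would choose a free resolution $F_\bullet \twoheadrightarrow M$ in $\amod$ and form the first-quadrant bicomplex
\[
C_{p,q} := \dcx_q F_p,
\]
with horizontal differentials induced by $F_\bullet$ and vertical differentials the Singer differentials of $\dcx_\bullet$. Functoriality of $\dcx$ together with the naturality of the Singer differential makes this a genuine bicomplex. Let $T_\bullet$ be its total complex. The strategy is to identify the two iterated homologies with $D_\bullet M$ and $\dhom_\bullet M$ respectively, and conclude that both are $H_\bullet(T_\bullet)$.

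For the first spectral sequence (take vertical homology first), each column is $\dcx_\bullet F_p$, whose $q$th homology is $\dhom_q F_p$. Since $F_p$ is a direct sum of modules $\Sigma^t \cala$ and the functors $R_s$, $\Sigma$, and homology of a chain complex all commute with direct sums, Proposition \ref{prop:vanish_proj_gen} gives $\dhom_q F_p = 0$ for $q>0$, while $\dhom_0 F_p = D F_p$. Hence $E^{1}$ is concentrated in the row $q=0$, where it equals the complex $D F_\bullet$, and $E^2_{p,0} = H_p(DF_\bullet) = D_p M$ by definition of the left derived functors. The spectral sequence degenerates at $E^2$, so $H_n(T_\bullet) \cong D_n M$.

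For the second spectral sequence (take horizontal homology first), fix $q$ and consider the row $\dcx_q F_\bullet$. Because $\dcx_q = \Sigma R_q (\Sigma^{q-1}-)$ is an exact functor on $\amod$ and $F_\bullet \to M$ is a resolution, the row is exact away from $p=0$ and has $H_0 = \dcx_q M$. The $E^1$-page is therefore concentrated in the column $p=0$ and equals $\dcx_\bullet M$ with its Singer differential, so $E^2_{0,q} = \dhom_q M$. Again the spectral sequence degenerates at $E^2$, giving $H_n(T_\bullet) \cong \dhom_n M$. Comparing the two identifications yields the natural isomorphism $\dhom_s M \cong D_s M$, which in degree zero is the one provided by Proposition \ref{prop:coker_SigmaD}.

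The main potential obstacle is the commutation of $\dhom_q$ with the (possibly infinite) direct sums appearing in $F_p$; this is handled by the fact that $R_s$ is constructed as a submodule of a (large) tensor product whose underlying $D(s)$-module structure, together with $\Sigma$ and the passage to homology, visibly preserves coproducts. Once that is in hand, both spectral sequences degenerate at $E^2$ for formal reasons and the comparison is automatic.
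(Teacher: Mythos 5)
Your proof is correct and fills in precisely what the paper leaves implicit under the phrase ``standard arguments of homological algebra'': a two-spectral-sequence comparison of $\dhom_\bullet$ and $D_\bullet$ resting on the same three ingredients the paper cites, namely $\dhom_0 = D$ (Proposition \ref{prop:coker_SigmaD}), the exactness of each $\dcx_n : \amod \rightarrow \amod$, and the vanishing of $\dhom_s$ for $s>0$ on projectives (Proposition \ref{prop:vanish_proj_gen}). Your remark about commutation of $\dhom_q$ with the possibly infinite direct sums appearing in a free resolution is a detail the paper glosses over, since Proposition \ref{prop:vanish_proj_gen} is stated only for the generators $\Sigma^t \cala$, and it is genuinely needed to pass from those generators to arbitrary free modules.
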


\begin{proof}
 This follows  by standard arguments
of homological algebra, since $\dhom _0 M = D M$ by Proposition
\ref{prop:coker_SigmaD} and $\dhom_s$ vanishes for $s>0$ on the projectives of
$\amod$, by Proposition \ref{prop:vanish_proj_gen}.
\end{proof}

\begin{rem}
From the construction, it is not clear  {\em a priori} that the homology of the
complex should be  unstable.
\end{rem}

From this result, one recovers immediately one of the main results of Lannes 
and Zarati
\cite{lannes_zarati_deriv_destab}:

\begin{cor}
\label{cor:deriv_destab_LZ}
 For $M$ an unstable module and $s \in \nat$, there is a natural isomorphism 
\[
 D_s (\Sigma^{1-s} M) \cong 
\Sigma R_s M 
\]
and a short exact sequence of unstable modules 
\[
0 
\rightarrow 
R_sM 
\rightarrow 
D_s (\Sigma^{-s}M) 
\rightarrow 
\Omega_1 D _{s-1} (\Sigma^{1-s}M) 
\rightarrow 
0.
\]
In particular, if $M$ is reduced, then 
$
 D_s(\Sigma^{-s} M) \cong R_s M.
$
\end{cor}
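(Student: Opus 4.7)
The plan is to prove the three assertions in order, using Theorem~\ref{thm:deriv_destab} and the Grothendieck short exact sequence for destabilization.

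For the first isomorphism, I would compute $D_s(\Sigma^{1-s}M)$ directly from the chain complex. By Theorem~\ref{thm:deriv_destab}, $D_s(\Sigma^{1-s}M)\cong H_s(\dcx(\Sigma^{1-s}M))$, whose $n$-th term is $\Sigma R_n(\Sigma^{n-s}M)$. The crucial input is the vanishing of the Singer differential on unstable modules: the exercise following Definition~\ref{def:dM} shows that $d_N\colon R_1N\to\Sigma^{-1}N$ is zero whenever $N$ is unstable, and hence, by the factorization $d_{n,N}=R_{n-1}(d_N)\circ(\text{incl})$, also $d_{n,N}=0$ for such $N$. For $n\geq s$, the argument $\Sigma^{n-s}M$ is unstable (since $M$ is), so the differentials $\dcx_{s+1}\to\dcx_s$ and $\dcx_s\to\dcx_{s-1}$ both vanish. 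Consequently $H_s(\dcx(\Sigma^{1-s}M))=\dcx_s(\Sigma^{1-s}M)=\Sigma R_sM$.

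For the short exact sequence, I would substitute $\Sigma^{1-s}M$ for $M$ in Corollary~\ref{cor:destab_groth_ses}, obtaining
\[
0\to\Omega D_s(\Sigma^{1-s}M)\to D_s(\Sigma^{-s}M)\to\Omega_1 D_{s-1}(\Sigma^{1-s}M)\to 0.
\]
Using the first isomorphism together with $\Omega\Sigma\cong\mathrm{id}_\unst$ (immediate from $\Omega=D\Sigma^{-1}$ on unstable modules) identifies the left-hand term with $R_sM$, giving the stated sequence.

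For the final assertion, I would induct on $s$. The base case $s=0$ is the tautology $D_0M=M=R_0M$. For the inductive step, observe that $1-s=-(s-1)$, so the inductive hypothesis applied to the reduced unstable module $M$ gives $D_{s-1}(\Sigma^{1-s}M)\cong R_{s-1}M$. The key subsidiary fact is that $R_{s-1}$ preserves reducedness; by a secondary induction this reduces to the case of $R_1$, which follows from the inclusion $R_1N\hookrightarrow\field[u]\otimes N$ combined with the standard facts that $\field[u]$ is reduced and tensor products of reduced unstable modules are reduced, together with the persistence of reducedness to submodules. Then Corollary~\ref{cor:omega1_reduced} gives $\Omega_1 D_{s-1}(\Sigma^{1-s}M)=\Omega_1 R_{s-1}M=0$, and the short exact sequence collapses to $D_s(\Sigma^{-s}M)\cong R_sM$.

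The main obstacle is not computational but essentially organisational: correctly matching the suspension indices so as to see that the chain complex is concentrated in homological degree $s$ precisely when the suspension parameter is tuned to make the argument of $R_s$ unstable. The algebraic content (vanishing of $d_N$ on unstable $N$, and reducedness preservation by $R_s$) is transparent from the structural properties of the Singer functors established earlier in the text.
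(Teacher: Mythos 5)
Your proof is correct and follows the same route as the paper: the first isomorphism from the vanishing of $d_N$ on unstable $N$ (so that the two differentials adjacent to $\dcx_s(\Sigma^{1-s}M)$ both vanish), the short exact sequence by substituting $\Sigma^{1-s}M$ into Corollary~\ref{cor:destab_groth_ses} and cancelling $\Omega\Sigma$, and the final statement by induction on $s$ using the fact that $R_s$ preserves reducedness together with Corollary~\ref{cor:omega1_reduced}. You merely flesh out details that the paper leaves implicit, notably the verification that $R_1$ (hence $R_s$) preserves reducedness.
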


\begin{proof}
The first statement is a consequence of the vanishing of the relevant 
differentials in
the chain complex $\dcx_\bullet M$ under the given 
hypotheses and the second follows from the short exact sequence of Corollary
\ref{cor:destab_groth_ses}. 
Finally it is clear that $R_s M$ is reduced if $M$ is reduced.
Hence, by induction on $s$, one sees that 
$D_s (\Sigma^{-s}M )$ is reduced and the $\Omega_1$ term vanishes.
\end{proof}

\begin{rem}
 Kuhn and McCarty \cite{kuhn_mccarty} (who work with homology) give a geometric 
construction of the analogous 
chain complex, notably giving a geometric construction of the Singer functors 
and the differential. The reader should 
compare the above with their approach, which shows the relationship with the 
Dyer-Lashof operations.
\end{rem}

\begin{exer}
Show that, if $M$ is a finite $\cala$-module (of finite total 
dimension), then the derived functors $D_sM$ are all non-trivial
for $s\gg0$.
\end{exer}

%%%%%%%%%%%%%%%%%%%%%%%%%%%%%%%%%%%%%%%%%%%%%%%%%%%%%%%%%%%%%%%%%%%%%%%%%%%%%%%%
\subsection{Iterated loops}
\label{subsect:iterated_loops}

Fix $ t \in \nat$, which corresponds to the number of loops $\Omega^t$.

\begin{nota}
For $t \in \nat$, 
 let $R_{1/t} : \amod \rightarrow \field[u]\dash\amod$ denote the  functor 
defined on an $\cala$-module $M$ by 
 \[
 R_{1/t} M := \field[u]/(u^t) \otimes_{\field[u]}R_1 M,
\]
equipped with the natural projection 
$
 R_1 M \twoheadrightarrow R_{1/t}M
$ in $\field[u]\dash\amod$.
\end{nota}

\begin{exam}
For $M$ an $\cala$-module, $R_{1/0}M=0$ and there is a natural isomorphism 
$R_{1/1} M \cong \Phi
M$.
\end{exam}

\begin{exer}
Show that 
\begin{enumerate}
 \item 
$R_{1/t}$ is exact;
\item
$R_{1/t}$ induces a functor $\unstalg \rightarrow \field[u]/(u^t) \downarrow
\unstalg.$
\end{enumerate}
\end{exer}

\begin{lem}
\label{lem:truncate_differential}
For $N$  an unstable module, the differential $d_{\Sigma^{-t}N}$ induces
a commutative diagram
\[
 \xymatrix{
R_1 (\Sigma^{-t}N)  \ar[r]^{d_{\Sigma^{-t}N}}
\ar@{->>}[d]
&
\Sigma^{-t-1}N
\ar@{=}[d]
\\
R_{1/t}
(\Sigma^{-t}N)  
\ar[r]_{d_{1/t,N}}
&
\Sigma^{-t-1}N,
}
\]
in particular the morphism $d_{1/t,N} : R_{1/t}(\Sigma^{-t}N)  \rightarrow
\Sigma^{-t-1}N$ is $\cala$-linear.
\end{lem}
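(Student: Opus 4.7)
The plan is to prove that the differential $d_{\Sigma^{-t}N}$ vanishes on the submodule $u^t R_1(\Sigma^{-t}N) \subset R_1(\Sigma^{-t}N)$. Granting this, $d_{\Sigma^{-t}N}$ factors uniquely through the quotient $R_1(\Sigma^{-t}N) \twoheadrightarrow R_{1/t}(\Sigma^{-t}N)$, giving the commutative diagram. The quotient map is $\cala$-linear (as $R_{1/t}$ takes values in $\field[u]/(u^t)\dash \amod$) and $d_{\Sigma^{-t}N}$ is $\cala$-linear by construction (composite of the $\cala$-linear inclusion $R_1 \hookrightarrow \field[u^{\pm 1}] \largetensor -$ with the $\cala$-linear residue); since the quotient map is surjective, the induced $d_{1/t,N}$ is then automatically $\cala$-linear.

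The first step is to bound the powers of $u$ that can appear in $R_1(\Sigma^{-t}N)$. For a homogeneous element $x \in \Sigma^{-t}N$ of degree $d$, instability of $N$ (applied to the corresponding element of degree $d+t$) forces $Sq^i x = 0$ for all $i > d + t$, so
\[
 St_1(x) = \sum_{i=0}^{d+t} u^{d-i} \otimes Sq^i x
\]
involves only monomials $u^j$ with $j \geq -t$. Since $R_1(\Sigma^{-t}N)$ is the $\field[u]$-submodule of $\field[u^{\pm 1}] \largetensor \Sigma^{-t}N$ generated by such elements, this yields the inclusion $R_1(\Sigma^{-t}N) \subseteq u^{-t}\field[u] \otimes \Sigma^{-t}N$; the large tensor product causes no trouble since $\Sigma^{-t}N$ is bounded below.

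Multiplying by $u^t$ gives $u^t R_1(\Sigma^{-t}N) \subseteq \field[u] \otimes \Sigma^{-t}N$, a subspace with no $u^{-1}$-component. The residue $\partial$ vanishes identically on $\field[u]$ by its very definition, so $d_{\Sigma^{-t}N} = (\partial \largetensor \Sigma^{-t}N) \circ \iota$ annihilates $u^t R_1(\Sigma^{-t}N)$, producing the required factorization. The only genuine obstacle is the bookkeeping on $u$-exponents, together with the observation that it is the instability of $N$ itself (not of $\Sigma^{-t}N$, which typically fails once $t>0$) that is the essential input; once this is noticed, everything else is formal from the definitions of $\partial$, $R_{1/t}$ and $d_{\Sigma^{-t}N}$.
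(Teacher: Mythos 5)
Your argument is correct, and it is precisely the straightforward proof the paper has in mind (the paper's own proof is omitted as ``Straightforward''). The key points are all present: the identification of the kernel of $R_1(\Sigma^{-t}N) \twoheadrightarrow R_{1/t}(\Sigma^{-t}N)$ with $u^t R_1(\Sigma^{-t}N)$; the exponent bound $R_1(\Sigma^{-t}N) \subseteq u^{-t}\field[u]\largetensor\Sigma^{-t}N$ coming from instability of $N$ (which forces $Sq^i x = 0$ for $i > |x|+t$, truncating the sum defining $St_1$); and the observation that multiplying by $u^t$ lands in the polynomial part, where the residue vanishes. You correctly isolate that it is instability of $N$, not of $\Sigma^{-t}N$, that does the work --- this is exactly the point of the remark and exercise that follow the lemma in the paper. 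The $\cala$-linearity of $d_{1/t,N}$ then follows formally since the quotient map is surjective and $\cala$-linear and the composite $d_{\Sigma^{-t}N}$ is $\cala$-linear (inclusion into $\field[u^{\pm 1}]\largetensor\Sigma^{-t}N$ followed by $\partial\largetensor\mathrm{id}$, both $\cala$-linear). No gaps.
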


\begin{proof}
 Straightforward.
\end{proof}

The following underlines that the instability hypothesis is essential here:

\begin{exer}
Give an example of an $\cala$-module $N$ (necessarily not unstable) for which 
the diagram is {\em not} 
commutative.
\end{exer}

\begin{prop}
\label{prop:coker_residue_loops}
For $N$ an unstable module,  the cokernel of $$\Sigma  d_{1/t,N} : \Sigma
R_{1/t}(\Sigma^{-t}N)  \rightarrow \Sigma^{-t}N$$
 is $\Omega ^t N$. 
\end{prop}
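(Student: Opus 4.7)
The plan is to reduce directly to Proposition \ref{prop:coker_SigmaD} by exploiting the factorization provided by Lemma \ref{lem:truncate_differential}. Since $\Omega^t N = D(\Sigma^{-t}N)$ by definition, applying Proposition \ref{prop:coker_SigmaD} to the $\cala$-module $M = \Sigma^{-t}N$ immediately gives that the cokernel of
\[
\Sigma d_{\Sigma^{-t}N} : \Sigma R_1(\Sigma^{-t}N) \rightarrow \Sigma^{-t}N
\]
is precisely $\Omega^t N$. So the entire task is to show that this cokernel coincides with the cokernel of $\Sigma d_{1/t,N}$.

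The second step invokes Lemma \ref{lem:truncate_differential}, which gives a commutative square
\[
\xymatrix{
\Sigma R_1(\Sigma^{-t}N) \ar[r]^{\Sigma d_{\Sigma^{-t}N}} \ar@{->>}[d] & \Sigma^{-t}N \ar@{=}[d] \\
\Sigma R_{1/t}(\Sigma^{-t}N) \ar[r]_{\Sigma d_{1/t,N}} & \Sigma^{-t}N,
}
\]
in which the left vertical arrow is surjective (being the suspension of the canonical quotient $R_1 \twoheadrightarrow \field[u]/(u^t) \otimes_{\field[u]} R_1$). Surjectivity of the left vertical map together with commutativity forces the images of the two horizontal maps to coincide.

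Consequently the two maps have identical cokernels, and by the first step this common cokernel is $\Omega^t N$, as required.

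There is no serious obstacle: the substantive content has already been absorbed into Proposition \ref{prop:coker_SigmaD} (which identifies the cokernel of $\Sigma d_M$ with $DM$) and into Lemma \ref{lem:truncate_differential} (whose proof is where the instability hypothesis on $N$ is used, to ensure that the $u^{-1}$-residue genuinely kills the submodule $u^t R_1(\Sigma^{-t}N)$). The proposition itself is then a one-line diagram chase. The only point to mention explicitly in the write-up is the elementary categorical fact that, in the commutative square above, a surjection on one side guarantees equality of the images of the two parallel maps, hence equality of their cokernels.
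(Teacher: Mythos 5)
Your argument is correct and is exactly the kind of short diagram chase the paper is alluding to with ``Straightforward (and important)'': combine Proposition \ref{prop:coker_SigmaD} (cokernel of $\Sigma d_M$ is $DM$, applied to $M=\Sigma^{-t}N$) with the factorization from Lemma \ref{lem:truncate_differential} and the elementary observation that a commutative square with surjective left vertical and identity right vertical forces equality of images, hence of cokernels. Nothing to add.
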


\begin{proof}
This is left as an important exercise for the reader. (Cf. Proposition 
\ref{prop:coker_SigmaD}.)
\end{proof}

\begin{defn}
 For  integers $0\leq s \leq t$, let $R_{s/t} : \amod \rightarrow
D(s)\dash\amod$ denote the functor defined on an $\cala$-module $M$ by 
\[
 R_{s/t}M := 
\mathrm{image}
\{
R_s M \hookrightarrow (R_1)^{\circ s} M 
\twoheadrightarrow 
R_{1/t} \circ R_{1/t-1} \circ \ldots \circ R_{1/t-s+1} M
\},
\]
equipped with the canonical surjection $
 R_s M \twoheadrightarrow R_{s/t}M
$ in $D(s)\dash\amod$.
\end{defn}

\begin{rem}
The functor 
 $R_{s/t}$ is zero if $s>t$, since $R_{1/0}=0$.
\end{rem}

\begin{exer}
(This is somewhat harder than some previous exercises.) 
For integers $0\leq s \leq t$ and an $\cala$-module $M$, describe $R_{1/t} 
\circ 
R_{1/t-1} \circ \ldots
\circ R_{1/t-s+1} M$ explicitly as a quotient of 
$(R_1)^{\circ s}M$ by specifying the ideal $I_{s, t} \subset (R_1)^{\circ s}
\field$  such that 
\[
R_{1/t} \circ R_{1/t-1} \circ \ldots \circ R_{1/t-s+1} M \cong 
\big (
(R_1)^{\circ s} \field 
\big)/I_{s,t}
\otimes_{(R_1)^{\circ s} \field}
(R_1)^{\circ s} M.
\]
Deduce from this an analogous description of $R_{s/t} M $ in terms of $R_sM$.
\end{exer}

\begin{prop}
\label{prop:properties_Rs/t}
For integers $1 \leq s \leq t$,
\begin{enumerate}
 \item 
$R_{s/t} : \amod \rightarrow D(s)\dash\amod$ is exact;
\item
$R_{s/t}$ restricts to an exact functor on unstable modules 
$R_{s/t} : \unst \rightarrow D(s)\dash\unst$.
\end{enumerate}
\end{prop}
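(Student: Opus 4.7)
The plan is to leverage that both the source and target of the natural transformation defining $R_{s/t}$ are exact. Write $H := R_{1/t} \circ R_{1/t-1} \circ \cdots \circ R_{1/t-s+1}$; by the preceding exercise each $R_{1/k}$ is exact, so $H$ is exact, as is $(R_1)^{\circ s}$ and $R_s$ itself (Section \ref{subsect:Rs_amod}). By definition, $R_{s/t}$ is the image of the composite natural transformation $R_s \hookrightarrow (R_1)^{\circ s} \twoheadrightarrow H$. To prove (1), I would reduce exactness of $R_{s/t}$ to exactness of the kernel $L := \ker(R_s \to H)$: given the pointwise short exact sequence $0 \to L \to R_s \to R_{s/t} \to 0$ of functors, the $3 \times 3$ lemma combined with exactness of $R_s$ and of $L$ forces exactness of $R_{s/t}$. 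Observe first that $K := \ker((R_1)^{\circ s} \to H)$ is exact, by the same $3 \times 3$ lemma applied to $0 \to K \to (R_1)^{\circ s} \to H \to 0$ using exactness of $(R_1)^{\circ s}$ and $H$; then $L = R_s \cap K$ inside $(R_1)^{\circ s}$.

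The hard part is establishing exactness of this intersection $L$ --- the abstract formalism alone does not yield it, since the natural transformation $R_s \to H$ is generally not surjective. Two possible strategies present themselves: (i) use the generation of $R_s M$ by the classes $St_s(x)$ for $x \in M$, together with the description of $K$ as the submodule of $(R_1)^{\circ s} M$ generated by the monomials $u_j^{t-s+j}$ for $1 \le j \le s$ (extended via the large tensor product of Section \ref{subsect:Rs_amod} in the general $\amod$ setting), to check directly that any lift in $R_s B$ of an element of $L C$ can be adjusted by an element of $R_s A$ to lie in $L B$; or (ii) induct on $s$ by producing, in parallel with Proposition \ref{prop:ses_Rs_amod}, a natural short exact sequence $0 \to \Sigma^{-1} R_{s/(t-1)} \Sigma M \to R_{s/t}M \to \Phi R_{(s-1)/(t-1)}M \to 0$, whence exactness of $R_{s/t}$ follows from that of $R_{(s-1)/(t-1)}$ via exactness of $\Sigma$ and $\Phi$ and a long exact sequence in homology. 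I expect the verification of the functorial compatibility of $L$ with short exact sequences to be the principal technical obstacle.

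For (2), first observe that $R_{s/t}$ restricts to a functor $\unst \to D(s)\dash\unst$: if $M$ is unstable, then $R_s M$ is unstable by the unstable Singer theory, and each $R_{1/k}$ preserves unstability, since for unstable $N$ the submodule $u^k \cdot R_1 N \subset R_1 N \subset \field[u] \otimes N$ is $\cala$-stable (immediate from the Cartan formula and instability) and the quotient is an unstable module; hence $HM$ is unstable, so $R_{s/t}M \subset HM$ is unstable. Exactness on $\unst$ then follows formally from part (1), since the inclusion $\unst \hookrightarrow \amod$ is exact and reflects short exact sequences.
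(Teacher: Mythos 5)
The paper offers no proof beyond ``Straightforward,'' so there is nothing explicit to compare against, but your diagnosis of where the work lies is accurate and worth commenting on.

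Your most important observation is that the image of a natural transformation between exact functors is not automatically exact, so the definition of $R_{s/t}$ as an image does not immediately yield part~(1). This is the genuine subtlety. Your $3\times 3$ reduction to the exactness of $L := \ker(R_s \to H) = R_s \cap K$ is sound, but you are also right that the intersection of two exact subfunctors of an exact functor need not be exact in general, so something specific to the situation must be used. Of your two strategies, (ii) is the cleaner route and can be carried out without circularity: the short exact sequence
\[
0 \to \Sigma^{-1} R_{s/t-1}\Sigma M \to R_{s/t} M \to \Phi R_{s-1/t-1} M \to 0
\]
of Proposition~\ref{prop:loop_ses_Rs/t} is established by an explicit identification of the kernel (not by appealing to exactness of $R_{s/t}$), so a double induction — with base cases $R_{1/t}$ (the exercise) and $R_{s/t}=0$ for $s>t$, together with exactness of $\Sigma^{\pm 1}$ and $\Phi$ and the long exact sequence in homology — gives exactness of $R_{s/t}$. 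Strategy~(i) is what the exercise preceding this proposition is pointing at: in the unstable case the underlying $D(s)$-module of $R_s M$ is free on $\Phi^s M$, and the exercise's base-change description $R_{s/t}M \cong \big(R_s\field/J_{s,t}\big)\otimes_{R_s\field} R_s M$ reduces $R_{s/t}$ to $\Phi^s$ tensored with a fixed graded vector space, which is visibly exact. Either route closes the gap you flag; you should commit to one and carry it through rather than leave the verification as a stated obstacle.

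For part~(2): your argument is correct, but it can be streamlined. It suffices to observe that $R_{s/t}M$ is a sub-$\cala$-module of $HM$, and that each $R_{1/k}$ preserves instability because $R_{1/k}N$ is a \emph{quotient} of the unstable module $R_1 N$ by the $\cala$-submodule $u^k R_1 N$ (which is a submodule since $(u^k)\subset\field[u]$ is $\cala$-stable and the $\field[u]$-module structure map on $R_1 N$ is $\cala$-linear); $\unst$ is closed under subobjects and quotients in $\amod$. Exactness on $\unst$ then follows from part~(1) exactly as you say, since $\unst\hookrightarrow\amod$ is exact and detects exactness.
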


\begin{proof}
 Straightforward.
\end{proof}

By construction, for an $\cala$-module $M$,  there is a natural inclusion 
$
 R_{s/t} M \hookrightarrow R_{s-1/t}  R_{1/t-s+1}M,
$
which fits into the commutative diagram:
\[
 \xymatrix{
R_s M
\ar@{->>}[d]
\ar@{^(->}[r]
&
R_{s-1}R_1 M
\ar@{->>}[d]
\\
R_{s/t}M 
\ar@{^(->}[r]
&
R_{s-1/t}R_{1/t-s+1}M.
}
\]

Hence, as in the construction of $d_{1/t,N}$, there is an induced morphism in
$\amod$, which is given for $N $ an unstable module by the composite:
\[
\xymatrix{
 R_{s/t}
(\Sigma^{-(t-s+1)}N) 
 \ar[r]
\ar[rd]_{d_{s/t, N} }
&
 R_{s-1/t}  R_{1/t-s+1}(\Sigma^{-(t-s+1)}N) 
\ar[d]^{R_{s-1/t}d_{1/t-s+1,N}}\\
&
R_{s-1/t}(\Sigma^{-(t-(s-1)+1))}N) 
.
}\]

\begin{lem}
 For  integers $1 \leq s \leq t$ and an unstable module $N$, the following
diagram commutes:
\[
 \xymatrix{
R_s (\Sigma^{-(t-s+1)}N)
\ar[rr]^{d_{s,\Sigma^{-(t-s+1)}N}}
\ar@{->>}[d]
&&
R_{s-1}(\Sigma^{-(t-(s-1)+1)}N)
\ar@{->>}[d]
\\
R_{s/t}(\Sigma^{-(t-s+1)}N)
\ar[rr]_{d_{s/t,N}}
&&
R_{s-1/t}(\Sigma^{-(t-(s-1)+1)}N).
}
\]
\end{lem}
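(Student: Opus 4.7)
The plan is to decompose the square into two smaller squares using the definitions of $d_{s,M}$ and $d_{s/t,N}$, and to verify commutativity of each piece either by naturality or by an application of Lemma \ref{lem:truncate_differential}. Set $M := \Sigma^{-(t-s+1)}N$, so $\Sigma^{-1}M = \Sigma^{-(t-(s-1)+1)}N$. By definition, the top horizontal arrow factors as $R_sM \hookrightarrow R_{s-1}R_1M \xrightarrow{R_{s-1}d_M} R_{s-1}(\Sigma^{-1}M)$, and the bottom horizontal arrow factors as $R_{s/t}M \hookrightarrow R_{s-1/t}R_{1/t-s+1}M \xrightarrow{R_{s-1/t}d_{1/t-s+1,N}} R_{s-1/t}(\Sigma^{-1}M)$. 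Inserting the intermediate column $R_{s-1}R_1M \twoheadrightarrow R_{s-1/t}R_{1/t-s+1}M$ splits the original square into a left square involving the inclusions $R_s\hookrightarrow R_{s-1}R_1$ and a right square involving the differentials.

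The left square is exactly the commutative diagram displayed just before the definition of $d_{s/t,N}$, which is built into the construction of $R_{s/t}$ as the image of $R_sM\to R_{s-1/t}R_{1/t-s+1}M$. The right square is further split by factoring its left vertical as the composite
\[
R_{s-1}R_1M \twoheadrightarrow R_{s-1/t}R_1M \twoheadrightarrow R_{s-1/t}R_{1/t-s+1}M,
\]
where the first surjection is the natural transformation $R_{s-1}\twoheadrightarrow R_{s-1/t}$ evaluated on $R_1M$, and the second is the exact functor $R_{s-1/t}$ applied to the quotient map $R_1M\twoheadrightarrow R_{1/t-s+1}M$. The upper of the resulting two subsquares commutes by naturality of $R_{s-1}\twoheadrightarrow R_{s-1/t}$ applied to the morphism $d_M$; the lower subsquare is obtained by applying the exact functor $R_{s-1/t}$ to the commutative diagram of Lemma \ref{lem:truncate_differential}, whose hypothesis is satisfied since $M=\Sigma^{-(t-s+1)}N$ with $N$ unstable.

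Putting these pieces together yields commutativity of the whole diagram. The main obstacle is not conceptual but bookkeeping: one must confirm that the indices on the truncated functors $R_{1/t-s+1}$ and on the desuspensions $\Sigma^{-(t-s+1)}$ match so that Lemma \ref{lem:truncate_differential} applies with parameter $t-s+1$, and that the chosen intermediate column $R_{s-1/t}R_{1/t-s+1}$ is indeed the one used in the definition of $d_{s/t,N}$. The instability of $N$ is used precisely once, in invoking Lemma \ref{lem:truncate_differential}; without it the bottom row would not even be $\cala$-linear, in line with the exercise immediately following that lemma.
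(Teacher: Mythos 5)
Your proof is correct and fills in precisely the details the paper's terse ``Straightforward'' presupposes: pasting the square out of (i) the definitional square relating $R_{s/t}\hookrightarrow R_{s-1/t}R_{1/t-s+1}$ to $R_s\hookrightarrow R_{s-1}R_1$, (ii) naturality of the surjection $R_{s-1}\twoheadrightarrow R_{s-1/t}$ applied to $d_M$, and (iii) $R_{s-1/t}$ applied to the square of Lemma \ref{lem:truncate_differential} with parameter $t-s+1$. The index bookkeeping ($t-(s-1)+1=t-s+2$, so $d_{1/t-s+1,N}$ lands in $\Sigma^{-(t-(s-1)+1)}N$) is right, and you correctly isolate the single point at which instability of $N$ is used. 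One harmless over-statement: for steps (ii) and (iii) you invoke \emph{exactness} of $R_{s-1/t}$, but only functoriality is needed to apply it to a commutative square.
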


\begin{proof}
 Straightforward. 
\end{proof}

\begin{rem}
The hypothesis that $N$ be unstable is essential for this
compatibility, as in Lemma \ref{lem:truncate_differential}.
\end{rem}

There is also an analogue of the short exact sequence of Proposition
\ref{prop:ses_Rs_amod}, based on the observation 
that, for $t\geq 1$ and $M$ an $\cala$-module, the natural surjection $\rho_1: 
R_1 M \twoheadrightarrow
\Phi M$ factorizes naturally across a surjection 
\[
 \rho_1 : R_{1/t} M \twoheadrightarrow \Phi M.
\]

\begin{prop}
\label{prop:loop_ses_Rs/t}
 For integers $1\leq s \leq t$, the morphism $\rho_1$ induces a short exact
sequence for the functors $ R_{*/t}$ which 
forms the bottom row of the commutative diagram for $M$ an $\cala$-module
\[
 \xymatrix{
0
\ar[r]&
\Sigma^{-1} R_s \Sigma M 
\ar[r]
\ar@{->>}[d]
&
R_s M
\ar[r]^{\rho_s}
\ar@{->>}[d]
&
\Phi R_{s-1}M
\ar[r]
\ar@{->>}[d]
&
0
\\
0\ar[r]
&
\Sigma^{-1} R_{s/t-1} \Sigma M 
\ar[r]
&
R_{s/t} M 
\ar[r]_{\rho_s}
&
\Phi R_{s-1/t-1} M
\ar[r]
&
0,
}
\]
where the top row is provided by Proposition \ref{prop:ses_Rs_amod} and the
vertical morphisms are the canonical surjections.
\end{prop}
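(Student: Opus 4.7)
The plan is to obtain the bottom short exact sequence as a quotient of the top one (Proposition \ref{prop:ses_Rs_amod}) by the short exact sequence of kernels of the three vertical canonical surjections; once commutativity of both squares is established, exactness of the bottom row follows by a snake-lemma argument.

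First I would verify commutativity of the two squares. For the right-hand square, this amounts to checking that $\rho_s : R_s M \to \Phi R_{s-1}M$ sends the kernel of $R_s M \twoheadrightarrow R_{s/t}M$ into the kernel of $\Phi R_{s-1}M \twoheadrightarrow \Phi R_{s-1/t-1}M$; by the definition of $R_{s/t}$ via the iterated $R_{1/\cdot}$ construction, this is immediate from the fact that the canonical surjection $R_1 \twoheadrightarrow R_{1/t}$ is compatible with $\rho_1$. For the left-hand square, I would check that the inclusion $\Sigma^{-1}R_s \Sigma M \hookrightarrow R_s M$ of Proposition \ref{prop:ses_Rs_amod} maps $\Sigma^{-1}\ker(R_s \Sigma M \twoheadrightarrow R_{s/t-1}\Sigma M)$ into $\ker(R_s M \twoheadrightarrow R_{s/t}M)$. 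At the level of $R_1$, this reduces to the observation that under the identification of $\Sigma^{-1}R_1\Sigma M$ with the $\field[u]$-submodule $uR_1 M \subset R_1 M$ (comparing the generators $\st_1$ after a degree shift), the sub-submodule $\Sigma^{-1}(u^{t-1}R_1 \Sigma M)$ corresponds to $u^t R_1 M$, which is exactly the kernel of $R_1 M \twoheadrightarrow R_{1/t}M$. The quadratic definition of $R_s$ and $R_{s/t}$ as sub-objects inside iterated $R_1$'s then propagates this compatibility.

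Granted both squares commute with all three vertical maps surjective, the snake lemma applied to the top short exact sequence identifies the bottom row as the quotient by the short exact sequence of kernels, and hence produces exactness of the bottom row. The main technical obstacle will be the left-hand kernel identification for $s \geq 2$, where $R_s$ sits inside $R_1 R_{s-1}$ via the quadratic relation $R_2 \hookrightarrow R_1 R_1$; one must verify that $\ker(R_s M \twoheadrightarrow R_{s/t}M)$ is the full intersection of $R_s M$ with the defining ideal of the iterated quotient, rather than merely contained in it. The natural route is induction on $s$: assuming the result for $s-1$, the exactness of $R_{s-1/t-1}$ (Proposition \ref{prop:properties_Rs/t}) applied to the $s = 1$ short exact sequence for $R_1 M$ yields a short exact sequence inside $R_{1/t}R_{s-1/t-1}M$, which intersected with the quadratic sub-object $R_{s/t}M$ yields the required bottom row.
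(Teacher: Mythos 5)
Your proposal follows essentially the same route as the paper: the heart of the matter is to identify the kernel of $\rho_s$ in the bottom row, which you settle explicitly for $s=1$ by the $u^t R_1 M$ computation (matching the identification $\Sigma^{-1}R_1\Sigma M \cong uR_1M$) and then handle for general $s$ by induction using the quadratic inclusion $R_s \hookrightarrow R_1 R_{s-1}$. The paper's proof is equally terse ("Indications"); the only point you leave implicit beyond what the paper does is the right-exactness of the kernel sequence (i.e., surjectivity of $\ker(R_sM \to R_{s/t}M) \to \Phi\ker(R_{s-1}M \to R_{s-1/t-1}M)$) needed for the nine-lemma step, which should be spelled out alongside the left-hand kernel identification.
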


\begin{proof}
(Indications.)
 The only non-trivial point is to identify the kernel in the bottom row; this is
clear in the case $s=1$ and the higher cases
are treated by induction.  
\end{proof}

\begin{exer}
For $M$  a finite $\cala$-module (i.e. the total dimension is finite) and 
integers $1 \leq s \leq t$, 
\begin{enumerate}
 \item 
show that the total dimension of $R_{s/t}M$ is finite;
\item
calculate $\conn (R_{s/t}M)$ in terms of $\conn M $ and $s,t$;
\item
calculate the top dimension of $R_{s/t}M$ in terms of the top dimension of $M$.
\end{enumerate}
(Hint: use information on the algebra $R_{s/t}\field$, which can be obtained 
inductively using Proposition \ref{prop:loop_ses_Rs/t}.)
\end{exer}

\begin{defn}
 Let $\lcx^t _\bullet : \unst \rightarrow \chcx (\amod)$ denote the exact 
functor
defined on an unstable module $N$ 
by  $\lcx^t _s N:= \Sigma R_{s/t}(\Sigma^{-(t-s+1)}N)$ and with differential
$\Sigma d_{s/t,N}$. 
\end{defn}

\begin{rem}
 The fact that $\lcx^t$ is a chain complex (namely $d^2=0$) is a {\em 
consequence} of the corresponding result for  $\dcx$, which
follows from Proposition \ref{prop:d_squared}.
\end{rem}

\begin{exer}
Show that the chain complex $\lcx^t_\bullet N$ is bounded for any
unstable module, $N$. Namely,  $\lcx ^t_s N = 0$ for $s >t$.
\end{exer}

\begin{prop}
\label{prop:compatibility_dcx_lcx}
For $N$ an unstable module, there is a natural surjection of chain complexes:
\[
 \dcx (\Sigma^{-t}N) 
\twoheadrightarrow 
\lcx^t (N).
\]
Moreover, the short exact sequences of Proposition \ref{prop:loop_ses_Rs/t}
induce a short exact sequence of chain complexes
which fits into the commutative diagram
\[
 \xymatrix{
0
\ar[r]
&
\Sigma^{-1}\dcx _\bullet (\Sigma^{-t+1}N)
\ar[r]
\ar@{->>}[d] 
&
\dcx _\bullet (\Sigma^{-t}N)
\ar[r]
\ar@{->>}[d]
&
\Sigma^{-1}\Phi \dcx_{\bullet -1} (\Sigma^{-t+1}N)
\ar@{->>}[d]
\ar[r]
&
0
\\
0
\ar[r]
&
\Sigma^{-1}\lcx^{t-1} _\bullet (N)
\ar[r]
&
\lcx^t _\bullet (N)
\ar[r]
&
\Sigma^{-1} \Phi \lcx^{t-1}_{\bullet -1} (N)
\ar[r]
&
0.
}
\]
\end{prop}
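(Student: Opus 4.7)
The plan is to assemble the statement from the degree-wise short exact sequences of Proposition \ref{prop:loop_ses_Rs/t}, using the compatibility of differentials already furnished by the lemma preceding the definition of $\lcx^t$.

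For the natural surjection $\dcx(\Sigma^{-t}N) \twoheadrightarrow \lcx^t(N)$: in degree $s$ one has $\dcx_s(\Sigma^{-t}N) = \Sigma R_s(\Sigma^{-(t-s+1)}N)$ and $\lcx^t_s N = \Sigma R_{s/t}(\Sigma^{-(t-s+1)}N)$, so the canonical surjection $R_s M \twoheadrightarrow R_{s/t}M$ at $M = \Sigma^{-(t-s+1)}N$, prefixed by $\Sigma$, supplies the required degree-$s$ map. Compatibility with the differentials is precisely the commutative square established just before the proposition, whose validity rests on Lemma \ref{lem:truncate_differential}; this is the step where unstability of $N$ is essential. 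Surjectivity is immediate in each degree.

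For the short exact sequence of chain complexes, I would apply Proposition \ref{prop:loop_ses_Rs/t} degree-wise with $M = \Sigma^{-(t-s+1)}N$, so that $\Sigma M = \Sigma^{-(t-s)}N$. Using the isomorphism $\Phi \Sigma \cong \Sigma^2 \Phi$, the three terms, after prefixing by $\Sigma$, identify respectively with $\Sigma^{-1}\lcx^{t-1}_s N$, $\lcx^t_s N$, and $\Sigma^{-1}\Phi \lcx^{t-1}_{s-1}N$. Naturality in $M$ of both the short exact sequence of Proposition \ref{prop:loop_ses_Rs/t} and of the differentials $d_{s/t,N}$ guarantees that the degree-wise inclusions and projections commute with the differentials of the three chain complexes, so that they assemble into a short exact sequence of chain complexes; this is precisely the argument used to establish the analogous statement of Proposition \ref{prop:destab_ses_ch_cx}. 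The commutativity of the large diagram is then inherited, square by square, from the commutative diagram of Proposition \ref{prop:loop_ses_Rs/t}, the vertical arrows being $\Sigma$ (respectively $\Sigma\Phi$) of the canonical surjections $R_s \twoheadrightarrow R_{s/t}$ (respectively $R_{s-1} \twoheadrightarrow R_{s-1/t-1}$).

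The principal obstacle is not conceptual but the bookkeeping of suspension shifts: one must verify that $\Sigma^{-(t-s)}N$ matches the internal suspension appearing in degree $s$ of $\lcx^{t-1}$ (which is $\Sigma^{-((t-1)-s+1)}N = \Sigma^{-(t-s)}N$), and that the application of $\Phi \Sigma \cong \Sigma^2 \Phi$ correctly reconciles the right-hand term with $\Sigma^{-1}\Phi \lcx^{t-1}_{s-1}N$. Once the indices are aligned, every square of the diagram commutes by pure functoriality, so no further computation is required.
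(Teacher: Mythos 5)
Your proof is correct and follows exactly the approach the paper intends (the paper omits the proof of this proposition, but your argument mirrors the one sketched for the analogous Proposition \ref{prop:destab_ses_ch_cx}). The only remark is that $\Phi\Sigma\cong\Sigma^2\Phi$ is needed only for the right-hand term, and the degree-$0$ column falls outside the range $1\leq s\leq t$ of Proposition \ref{prop:loop_ses_Rs/t} but is trivial to check directly; your bookkeeping of the suspension indices is otherwise exactly right.
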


\begin{exer}
Show that  $\lcx ^1_\bullet N = \big(\Sigma^{-1} \Phi N
\rightarrow \Sigma^{-1} N\big)$.
\end{exer}

\begin{rem}
 The functor $N \mapsto \lcx^t_\bullet N$, for $N$ an unstable module, has the 
same formal properties as that
constructed by Singer in \cite{singer_loops_II}. The current presentation, 
being 
based upon quotients of the Singer functors and 
the Singer differential, makes explicit the relationship between $\dcx 
\Sigma^{-t}$ and $\lcx^t$. 
\end{rem}

\begin{thm}
 For $N$ an unstable module and $s,t\in \nat$, there is a natural isomorphism 
\[
 \Omega^t_s N \cong H_s (\lcx^t _\bullet N). 
\]
Moreover, the surjection of chain complexes $\dcx_\bullet (\Sigma^{-t}N) 
\twoheadrightarrow \lcx^t_\bullet (N)$ of Proposition 
\ref{prop:compatibility_dcx_lcx} induces the 
natural transformations $D_s (\Sigma^{-t}N) 
\rightarrow 
\Omega^t_s N$ in homology.
\end{thm}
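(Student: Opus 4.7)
The plan is to follow the strategy of Theorem \ref{thm:deriv_destab} essentially verbatim, replacing $D$ by $\Omega^t$. Three ingredients suffice: (a) exactness of the functor $\lcx^t_\bullet \colon \unst \rightarrow \chcx(\amod)$, which is built into the definition; (b) a natural isomorphism $H_0(\lcx^t_\bullet N) \cong \Omega^t N$ for every unstable module $N$; (c) the vanishing $H_s(\lcx^t_\bullet F) = 0$ for $s>0$ and every projective object $F$ of $\unst$. Given these, the standard double-complex argument applied to a projective resolution $P_\bullet \rightarrow N$ in $\unst$ (as in the proof of Theorem \ref{thm:deriv_destab}) identifies $H_s(\lcx^t_\bullet N)$ with $\Omega^t_s N$, naturally in $N$.

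Ingredient (b) is immediate from the explicit description $\lcx^t_0 N = \Sigma^{-t} N$ and $\lcx^t_1 N = \Sigma R_{1/t}(\Sigma^{-t} N)$ with differential $\Sigma d_{1/t,N}$, combined with the (unlabelled) iterated-loop analogue of Proposition \ref{prop:coker_SigmaD} identifying the cokernel of $\Sigma d_{1/t, N}$ as $\Omega^t N$.

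The main work lies in (c), which I would prove by induction on $t$. The base $t = 0$ is trivial, since $\lcx^0_\bullet F$ is concentrated in degree zero. For the inductive step, apply the short exact sequence of chain complexes from Proposition \ref{prop:compatibility_dcx_lcx},
\[
 0 \rightarrow \Sigma^{-1} \lcx^{t-1}_\bullet F \rightarrow \lcx^t_\bullet F \rightarrow \Sigma^{-1} \Phi \lcx^{t-1}_{\bullet-1} F \rightarrow 0,
\]
and consider the resulting long exact sequence in homology. By the inductive hypothesis, $H_s(\lcx^{t-1}_\bullet F) = 0$ for $s > 0$ and $H_0(\lcx^{t-1}_\bullet F) \cong \Omega^{t-1} F$, which is again projective, and hence reduced (since $\Omega^{t-1}$ preserves projectives). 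For $s \geq 2$ the flanking terms in the long exact sequence vanish, forcing $H_s(\lcx^t_\bullet F) = 0$. For $s = 1$ the sequence collapses to
\[
 0 \rightarrow H_1(\lcx^t_\bullet F) \rightarrow \Sigma^{-1}\Phi \Omega^{t-1} F \xrightarrow{\Sigma^{-1} \lambda_{\Omega^{t-1}F}} \Sigma^{-1} \Omega^{t-1} F,
\]
and $\lambda$ is injective on a reduced module. The subtlety, which I expect to be the main technical obstacle, is the identification of the connecting morphism with $\Sigma^{-1}\lambda$; this I would treat by tracing the naturality of the construction of the short exact sequence of complexes, exactly in parallel with the analogous identification of $\lambda_0$ with $\Sigma^{-1}\lambda_{DM}$ recorded in Proposition \ref{prop:destab_ses_ch_cx}.

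For the final statement, the surjection $\dcx_\bullet(\Sigma^{-t}N) \twoheadrightarrow \lcx^t_\bullet N$ is induced, degreewise, by the canonical projection $R_s \twoheadrightarrow R_{s/t}$, and is manifestly compatible with the short exact sequences of complexes governing both sides (Propositions \ref{prop:destab_ses_ch_cx} and \ref{prop:compatibility_dcx_lcx}). The induced map in homology is thus natural; in degree zero it reduces to the identity on $\Omega^t N = D(\Sigma^{-t}N)$ (Proposition \ref{prop:D_iterated_loops}), so it agrees with the canonical comparison $D_s(\Sigma^{-t}N) \rightarrow \Omega^t_s N$ (cf.\ the exercise preceding Example \ref{exam:e1}) on the level of $H_0$ applied to a projective resolution, and the identification in higher degrees follows by dimension shifting along the two compatible short exact sequences.
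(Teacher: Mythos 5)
Your proposal is correct and follows essentially the same route the paper only sketches: run the standard homological-algebra argument from Theorem \ref{thm:deriv_destab}, reducing to acyclicity of $\lcx^t_\bullet$ on projectives, and prove that acyclicity by induction on $t$ via the short exact sequence of Proposition \ref{prop:compatibility_dcx_lcx}, using that $\Omega^{t-1}$ preserves projectives (hence reduced modules) so that the connecting map $\Sigma^{-1}\lambda$ is injective. The paper's remark that "the inductive step is easier, since a double induction on $t$ and $s$ can be used" is precisely what your induction on $t$ realizes, replacing the connectivity estimate (Lemma \ref{lem:conn_dhom}) that was needed in the proof of Proposition \ref{prop:vanish_proj_gen} by the finiteness of $\lcx^t_\bullet$; your flagged identification of the connecting map with $\Sigma^{-1}\lambda$ follows from the commutative diagram of Proposition \ref{prop:compatibility_dcx_lcx} comparing with the already-established $\lambda_0$ of Proposition \ref{prop:destab_ses_ch_cx}.
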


\begin{proof} (Indications.)
 The proof of the first point is formally similar to that of Theorem 
\ref{thm:deriv_destab} but the
inductive step is easier, since  a double induction 
on $t$ and $s$ can be used. This argument is identical to that used in Singer
\cite{singer_loops_II}, which only requires
the formal properties of the chain complex. 
\end{proof}

The following result is analogous to Corollary \ref{cor:deriv_destab_LZ} (and is
implicit in \cite{singer_loops_II}).

\begin{cor}
 For $N$ an unstable module, $s,t \in \nat$  and $k \in \nat$ such that  $k \geq
t-s+1$, there is a natural isomorphism:
\[
 \Omega^t_s (\Sigma^{k} N) 
\cong 
\Sigma R_{s/t} (\Sigma^{k-(t-s+1)} N)
\]
of unstable modules.

Under these hypotheses, there is a short exact sequence of unstable modules:
\[
 0
\rightarrow 
R_{s/t-1} (\Sigma^{k-(t-s+1)} N) 
\rightarrow 
\Omega^t_s (\Sigma^{k-1} N) 
\rightarrow 
\Omega_1 \Omega^{t-1}_{s-1} (\Sigma^{k-1} N) 
\rightarrow 
0.
\]
\end{cor}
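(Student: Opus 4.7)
The plan is to exploit the chain complex $\lcx^t_\bullet$ from the preceding theorem, showing that under the hypothesis $k \geq t-s+1$ both the differential entering and the differential leaving the degree-$s$ term vanish, so that $\Omega^t_s(\Sigma^k N) = H_s(\lcx^t_\bullet(\Sigma^k N))$ coincides with $\lcx^t_s(\Sigma^k N) = \Sigma R_{s/t}(\Sigma^{k-(t-s+1)}N)$. The short exact sequence will then follow by feeding this identification into the Grothendieck short exact sequence of Corollary~\ref{cor:loop_groth_ses}. The whole argument is parallel to that of Corollary~\ref{cor:deriv_destab_LZ}, but with the unstable Singer functors $R_s$ replaced by their truncated analogues $R_{s/t}$.

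For the first statement, I would unfold the formula $\lcx^t_s(\Sigma^k N) = \Sigma R_{s/t}(\Sigma^{k-(t-s+1)}N)$ and observe that the differential from degree $s$ to degree $s-1$ is constructed, via the definition of $d_{s/t,\,\Sigma^k N}$, from the Singer differential $d_M : R_1 M \to \Sigma^{-1}M$ applied with $M = \Sigma^{k-(t-s+1)}N$. The exercise following Definition~\ref{def:dM} shows that $d_M = 0$ whenever $M$ is unstable: indeed $d_M$ is built from the residue $\partial:\field[u^{\pm 1}]\to\Sigma^{-1}\field$, and for an unstable $x\in M$ all terms of $St_1(x)=\sum u^{|x|-i}\otimes Sq^i(x)$ sit in non-negative $u$-degrees. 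Under $k\geq t-s+1$, the module $\Sigma^{k-(t-s+1)}N$ is unstable, so this differential is zero; the analogous analysis of the differential entering degree $s$ involves $d_M$ with $M = \Sigma^{k-(t-s)}N$, which is unstable under the weaker condition $k\geq t-s$ guaranteed by the hypothesis. Both relevant differentials thus vanish, giving the asserted isomorphism.

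For the short exact sequence, I would apply Corollary~\ref{cor:loop_groth_ses} with $M = \Sigma^{k-1}N$ to obtain
\[
 0 \to \Omega\,\Omega^{t-1}_s(\Sigma^{k-1}N) \to \Omega^t_s(\Sigma^{k-1}N) \to \Omega_1\,\Omega^{t-1}_{s-1}(\Sigma^{k-1}N) \to 0.
\]
The isomorphism just established applies to $\Omega^{t-1}_s(\Sigma^{k-1}N)$, since the hypothesis $k-1\geq (t-1)-s+1$ is equivalent to $k\geq t-s+1$; it identifies $\Omega^{t-1}_s(\Sigma^{k-1}N)$ with $\Sigma R_{s/t-1}(\Sigma^{k-(t-s+1)}N)$. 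Applying $\Omega$ and using $\Omega\Sigma\cong \mathrm{id}_\unst$ rewrites the leftmost term as $R_{s/t-1}(\Sigma^{k-(t-s+1)}N)$, yielding the stated sequence. The main (and essentially only) obstacle in carrying this out is the numerical bookkeeping: one must verify at each relevant slot of the chain complex that the suspension exponent of the module on which $d_M$ is evaluated is non-negative, so that the instability vanishing of $d_M$ can be invoked. Once this is done, the result is a formal consequence of the chain complex $\lcx^t_\bullet$ together with the Grothendieck short exact sequence.
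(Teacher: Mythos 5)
Your proof is correct and follows exactly the route the paper has in mind: this corollary is the iterated-loops analogue of Corollary~\ref{cor:deriv_destab_LZ}, whose proof the paper gives as ``vanishing of the relevant differentials in the chain complex under the given hypotheses'' for the isomorphism, combined with the appropriate Grothendieck short exact sequence (Corollary~\ref{cor:loop_groth_ses} in place of Corollary~\ref{cor:destab_groth_ses}) for the second statement. Your bookkeeping of the suspension exponents (outgoing differential controlled by $\Sigma^{k-(t-s+1)}N$, incoming by $\Sigma^{k-(t-s)}N$, both unstable when $k\ge t-s+1$) and the reindexing $k-1\ge (t-1)-s+1 \Leftrightarrow k\ge t-s+1$ are both accurate.
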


\begin{proof}
 Straightforward.
\end{proof}

\begin{rem}
Unlike the functor $R_1$, the functor $R_{1/t}$ restricted to $\unst$ does not 
send reduced unstable modules to reduced objects if $t>1$; in particular, 
$R_{1/t} \field \cong \field[u]/{u^t}$ is not reduced for $t>1$. 
\end{rem}

\begin{exam}
 For $t=1$ and $N$ an  unstable module, 
\begin{enumerate}
 \item 
for $s=0$ and $k \geq 2$, $\Omega (\Sigma^k N) \cong \Sigma \Sigma^{k-2}
N \cong \Sigma ^{k-1}N$, as expected;
\item
for $s=1$, we require $k \geq 1$ and get $\Omega_1 (\Sigma^k N) \cong \Sigma
R_{1/1} (\Sigma^{k-1}N) \cong \Sigma^{-1} \Phi \Sigma^k N$, using the
identification $R_{1/1} \cong \Phi$ and $\Phi \Sigma\cong \Sigma^2 \Phi$. 
\end{enumerate}
\end{exam}

%%%%%%%%%%%%%%%%%%%%%%%%%%%%%%%%%%%%%%%%%%%%%%%%%%%%%%%%%%%%%%%%%%%%%%%%
\subsection{The Lannes-Zarati homomorphism}

The derived functors of destabilization 
are related to  homology over the Steenrod algebra as follows.  For $N$ an 
$\cala$-module, there is a natural transformation $D N  \to \field 
\otimes_{\cala} N$ 
 of functors from $\cala$-modules to $\cala$-modules, where $ \field 
\otimes_{\cala} N$ is given the trivial $\cala$-module structure. This is obtained 
by applying the destabilization functor $D$ to the quotient 
$N \twoheadrightarrow \field \otimes_{\cala} N$ and then composing with the 
canonical  
inclusion:
\[
D N  \twoheadrightarrow D(\field \otimes_{\cala} N) =  (\field \otimes_{\cala} 
N)^{\geq 0} 
\stackrel{\subset}{\longrightarrow} 
 \field \otimes_{\cala} N.     
\]
This  passes to  derived functors to give    
$$
 D_s N \rightarrow \tor_s^{\cala} (\field, N).
$$

Now, as in Remark \ref{rem:alpha_M_s}, for $M$ an unstable module, there is a 
natural transformation
$
 \alpha^M_s : D_s (\Sigma^{-s} M) \rightarrow R_s M. 
 $ 
Moreover,  by \cite[Théorème 2.5]{lannes_zarati_deriv_destab},  $\alpha^{\Sigma 
M}_s$ induces an isomorphism 
\[
 D_s (\Sigma ^{1-s } M)
 \stackrel{\cong}{\longrightarrow} 
 \Sigma R_s M .
\]

Hence there is a natural morphism of $\cala$-modules 
$\Sigma  R_s M \rightarrow \tor_s^\cala (\field, 
\Sigma^{1-s} M) \cong \Sigma \tor_s^\cala (\field, 
\Sigma^{-s} M) $ and thus 
$
 \field \otimes_\cala R_s M 
 \rightarrow 
 \tor_s^\cala (\field, 
\Sigma^{-s} M).
$ 

The dual of this map, 
\begin{eqnarray*}
\ext_{\cala}^{s}(\Sigma^{-s} M, \field) \to (\field \otimes_\cala R_s 
M)^*, 
\end{eqnarray*}
is the Lannes-Zarati homomorphism. 

\begin{rem}
\label{rem:LZ_Hurewicz}
The Lannes-Zarati homomorphism corresponds to an associated graded of the 
mod $2$ Hurewicz map 
 \begin{eqnarray*}
 \pi_* (\Omega^\infty  
\Sigma^\infty X) 
\rightarrow 
H_* (\Omega^\infty \Sigma^\infty X)   
\end{eqnarray*}
when $M$ is the reduced cohomology of the pointed space $X$. (The proof of this 
assertion is  sketched in \cite{Lannes} and  \cite{Goerss}.)
\end{rem}

Singer \cite{singer_invt_lambda} constructed a chain complex $\Gamma^+_\bullet
M$ that computes the homology of $M$ over the Steenrod algebra 
as a sub-complex of a larger complex $\Gamma_\bullet M$. Using the method of
Nguy{\~\ecircumflex}n H. V. H{\uhorn}ng  and  Nguy{\~\ecircumflex}n Sum 
\cite{hung_sum} (adapted to the prime $2$) and the material 
presented here, 
it is possible to show the following:

\begin{prop}
 There is a natural inclusion of chain complexes 
\[
 \dcx _\bullet M \hookrightarrow \Gamma^+_\bullet M
\]
that induces the dual Lannes-Zarati homomorphism in homology.
\end{prop}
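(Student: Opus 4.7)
The approach is to exhibit both chain complexes as natural subcomplexes of a common ambient object built from $GL_s$-invariant theory applied to $M$, with differentials induced by the Singer residue $\partial: \field[u^{\pm 1}] \to \Sigma^{-1}\field$. Recall that Singer's $\Gamma^+_\bullet M$ is constructed in degree $s$ from (the appropriate submodule of) a complex built on $H^*(BV_s)[\omega_{s,0}^{-1}] \largetensor M$, with differential assembled from iterated residues and, following the method of Hung and Sum, has homology $\mathrm{Tor}^\cala_\bullet(\field, M)$. On the other hand, $\dcx_s M = \Sigma R_s(\Sigma^{s-1}M)$ is by construction a subobject of $\Sigma D(s)[\omega_{s,0}^{-1}] \largetensor \Sigma^{s-1}M$ (see Section \ref{subsect:Rs_amod}).

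The levelwise inclusion is then built from the canonical embedding $D(s) \hookrightarrow H^*(BV_s)$ of $GL_s$-invariants, which is compatible with inverting $\omega_{s,0}$; after tensoring with $M$ (using the large tensor product) and applying the appropriate suspension shift, this identifies each $\dcx_s M$ with a sub-$\cala$-module of the $s$-th term of $\Gamma^+_\bullet M$. The compatibility with differentials is the principal technical point: using the quadratic description
\[
R_s = \bigcap_{a+b+2=s} R_1^{\circ a} R_2 R_1^{\circ b},
\]
one reduces to the case $s=1$, where $d_M$ in both complexes is, by Definition \ref{def:dM}, the composite of the canonical inclusion into $\field[u^{\pm 1}] \largetensor M$ with $\partial \largetensor M$, so equality is immediate. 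The higher-degree cases then follow by naturality and induction, exploiting the fact that the Singer residue differential on $\Gamma^+$ restricts to the Dickson subalgebras to give precisely $d_{s,M}$.

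For the identification of the induced map in homology, observe that in degree zero $H_0(\dcx_\bullet M) = DM$ by Proposition \ref{prop:coker_SigmaD}, while $H_0(\Gamma^+_\bullet M) = \field \otimes_\cala M$, and the inclusion realizes the canonical surjection $DM \twoheadrightarrow \field \otimes_\cala M$ (both appear as quotients of $M$ by Steenrod operation ideals, and $BM \subset \overline{\cala} M$, as noted in the introduction). In higher homological degrees, Theorem \ref{thm:deriv_destab} gives $H_s(\dcx_\bullet M) \cong D_s M$ and the Hung-Sum computation gives $H_s(\Gamma^+_\bullet M) \cong \mathrm{Tor}^\cala_s(\field, M)$, so the induced map is the natural transformation $D_s M \to \mathrm{Tor}^\cala_s(\field, M)$, which is (up to dualizing) the Lannes-Zarati homomorphism of \cite{lannes_zarati_deriv_destab}. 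The hard part will be a clean verification of the differential compatibility at the chain level, since the residues defining $\Gamma^+$ act on each polynomial generator of $H^*(BV_s)$ in turn and one must check carefully that they preserve the Dickson sub-invariants appearing in $R_s$; the quadratic reduction and the symmetry between $\dcx$ and $\Gamma^+$ at $s=1$ should make this manageable but tedious.
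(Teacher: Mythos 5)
The paper does not actually supply a proof of this proposition: it simply states that the result can be obtained by adapting the method of Hung and Sum to the prime $2$, in the same way that the proof of Proposition \ref{prop:d_squared} is deferred to an embedding of $\dcx_\bullet M$ into the $\field_2$-analogue of $\Gamma_\bullet M$. Your structural outline --- exhibiting both complexes inside a common ambient object coming from localized invariant theory, building the levelwise inclusion from $D(s)\hookrightarrow H^*(BV_s)$, and using the quadratic description of $R_s$ to reduce differential compatibility to the case $s=1$ --- is consistent with what the paper gestures at, and you correctly flag that the chain-level compatibility of the Singer residue with the Dickson sub-invariants is where the real work lies.

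There is, however, a concrete error in your $H_0$ discussion. You assert that the inclusion realizes a \emph{surjection} $DM\twoheadrightarrow \field\otimes_\cala M$, justified by ``$BM\subset\overline{\cala}M$, as noted in the introduction.'' The introduction says the opposite: it explicitly warns that $DM$ is concentrated in degrees $\geq 0$, so the map $DM\to\field\otimes_\cala M$ is in general \emph{not} surjective. The containment $BM\subset\overline{\cala}M$ is false precisely because $BM$ is defined using $Sq^i$ for $i>|x|$, which includes $Sq^0 x=x$ when $|x|<0$, while $Sq^0=1\notin\overline{\cala}$. (Example \ref{exam:e1} exhibits this with $M=\Sigma^{-1}\field$: here $DM=0$ but $\field\otimes_\cala M=\Sigma^{-1}\field\neq 0$.) Consequently the map on $H_0$ is not a quotient of $M$ by a larger ideal, and the identification of the induced homology map requires the usual homological-algebra argument (a chain map between the destabilized free resolution and the $\field\otimes_\cala$-reduced free resolution) rather than the elementary comparison of ideals that you propose. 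The rest of your argument --- that once the chain-level comparison is in place, the induced map on $H_s$ must be the derived transformation $D_sM\to\mathrm{Tor}^\cala_s(\field,M)$ --- is also stated a little loosely, since matching sources and targets of homology does not by itself identify the map; one must track the chain map against resolutions.
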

 
\begin{rem}
 Alternative approaches to chain level representations of the Lannes-Zarati 
homomorphism have been given (see \cite{HT} for example). 
\end{rem}

%%%%%%%%%%%%%%%%%%%%%%%%%%%%%%%%%%%%%%%%%%%%%%%%%%%%%%%%%%%%%%%%%%%%%%%%%%%%%%%%
%%%%%%%%%%%
\section{Perspectives}

This section indicates some recent developments and open problems\footnote{This 
material was not presented in the original lectures.}. The ground field  is 
sometimes taken to be $\field_p$ with $p$ odd.

%%%%%%%%%%%%%%%%%%%%%%%%%%%%%%%%%%%%%%%%%%%%%%%%%%%%%%%%%%%%%%%%%%%%

\subsection{The spherical class conjecture and related problems}  For $X$ a 
pointed space, the 
mod $2$ Hurewicz map induces 
\[
h :  \pi_* (Q X ) 
 \rightarrow 
H_* (QX ; \field_2),  
 \]
where $QX := \Omega^\infty \Sigma^\infty X$ is the associated infinite loop 
space. Here, $\pi_* (QX)$ identifies with the stable homotopy groups $\pi^S_* 
(X)$ of $X$. 

The famous Curtis conjecture asserts the following:

\begin{conj}
 \cite{Curtis}
 For $X = S^0$,  the kernel contains all classes except those of odd Hopf or 
Kervaire invariant.  
\end{conj}

Various generalizations of the Curtis conjecture have been proposed. Many 
involve the Adams filtration, which we recall here for 
a generalized homology theory $E_*$:

\begin{defn}
\label{defn:Adams_filtration}
 A map $f : X \rightarrow Y$ between spectra has $E$-Adams filtration at least $s 
\in \nat$ if there is a factorization 
 \[
 \xymatrix{
 X = X_0
 \ar[r]^{f_0} 
 \ar@/_1pc/[rrrr]_f
 &
 X_1 
 \ar[r]^{f_1}
 &
 \ldots 
 \ar[r]
 &
 X_{s-1} 
 \ar[r]^{f_{s-1}}
 &
 X_s = Y
 }
 \]
 where, for each $i$, $E\wedge f_i $ is null.
\end{defn}

\begin{rem}
 For ordinary cohomology $H\field_p$, this corresponds to the filtration arising 
from the Adams spectral sequence, 
 which has $E_2$-page 
 $$\ext^{s,t}_\cala (H^* (Y), H^* (X)),$$
 where the cohomological degree $s$ corresponds to the Adams filtration. 
\end{rem}

Nguy{\~\ecircumflex}n H. V. H{\uhorn}ng has proposed the following generalization of the Curtis conjecture 
(see \cite{HT}):

\begin{conj}
[The generalized spherical class conjecture]
\label{conj:gen_spher_class}
For $X$ a pointed space, the mod $2$ Hurewicz homomorphism 
$$h:  
\pi_{*}(Q X) \to H_{*}(Q X; \field_2)$$
 vanishes on classes of Adams filtration greater than $2$.
\end{conj}

The Hurewicz map can also be studied in the stable context. Taking $Y$ to be a 
spectrum and using $\field_p$ coefficients, 
we have the mod $p$ Hurewicz map:
\[
 h : 
 \pi_* (Y) 
 \rightarrow 
 H_* (\Omega^\infty Y ; \field_p).
\]

Motivated by his recent work on the Hurewicz map relating the Adams filtration 
to a certain {\em augmentation ideal filtration} (see Section 
\ref{subsect:gen_LZ} below),  Kuhn  \cite{K_adams_filt} has 
proposed:

\begin{conj}
\label{conj:Kuhn_Curtis}
Let $Y$ be a spectrum such that $H^* (Y; \field_p)$ is finitely generated as 
an $\cala$-module. Then there exists $s$ such that the kernel of 
$$h : 
 \pi_* (Y) 
 \rightarrow 
 H_* (\Omega^\infty Y ; \field_p)$$
 contains all elements of Adams filtration at 
least $s$. 
\end{conj}

\begin{rem}
 If $Y = \Sigma^\infty X$, for $X$ a pointed space (satisfying certain finiteness hypotheses), 
 Gaudens and Schwartz \cite{GS} have shown that the hypothesis of Conjecture \ref{conj:Kuhn_Curtis} 
 that $H^* (Y; \field_p)$ is finitely generated as an $\cala$-module implies that $H^* (Y ; \field_p)$ is actually 
 finite. 
 
 Hence, in the unstable realm, Conjectures \ref{conj:gen_spher_class} and \ref{conj:Kuhn_Curtis} should be considered on finite, pointed CW complexes. 
 Here, the former asserts the stronger form that $s$ can be taken to be $2$. It is possible that the more general stable conjecture proposed by Kuhn could shed further light
  on the unstable case. 
\end{rem}

\begin{rem}
The above conjectures are hard. As a first step, it is interesting to consider related 
algebraic conjectures, as below.
\end{rem}

 Recall that the Lannes-Zarati homomorphism is an algebraic approximation to the 
Hurewicz morphism (see Remark \ref{rem:LZ_Hurewicz}). The following was proposed by 
Nguy{\~\ecircumflex}n H. V. H{\uhorn}ng (see \cite{HT}):

\begin{conj}
[The generalized algebraic spherical class conjecture]
\label{conj:gen_alg_spher}
For $M$ an unstable module, the mod $2$ Lannes-Zarati homomorphism 
$$
\ext_{\cala}^{s}(\Sigma^{-s} M, \field) \to (\field_2 \otimes_\cala R_s 
M)^* 
$$
vanishes in  positive degree for $s>2$.
\end{conj}

\begin{rem}
This conjecture  has been proved for $M=\field_2$ and  $s\in \{3,4, 5\}$ 
(see \cite{H_spherical,Hung1999,Hung2003,HQT}).  However, the 
general case seems to be beyond reach using existing techniques.
 \end{rem}
 
A further simplification is obtained by restricting to the image of the  
algebraic Singer 
transfer \cite{singer_transfer}. This leads to the following (cf. 
\cite[Conjecture 1.6]{HT} and the 
presentation in \cite{Hung_Powell}), in which $R_s M$ is considered as a 
submodule of $P_s \otimes M$, where 
$P_s := H^* (BV_s)$, $V_s$ a rank $s$ elementary abelian $2$-group.

\begin{conj} 
[The weak generalized algebraic spherical class conjecture] 
\label{conj:weak_alg}
Let $M$ be an unstable $\cala$-module (over $\field_2$) and $s>2$ be an integer. 
Then every  positive degree element of 
the Singer construction $R_sM$ is $\cala$-decomposable in 
$P_s\otimes M$. 
\end{conj}

The full weak generalized algebraic spherical class conjecture was proved as 
the main result of \cite{Hung_Powell}:

\begin{thm}
\label{thm:weak_algebraic}
 For $M$ an unstable module (over $\field_2$) and  $2<s \in \nat$, the morphism  
\[
R_s M \rightarrow \field_2 \otimes_{\cala} (P_s \otimes M) 
\]
is trivial  on elements of positive degree.
\end{thm}

\begin{rem}
\ 
\begin{enumerate}
  \item
 Theorem \ref{thm:weak_algebraic} gives evidence supporting Conjecture \ref{conj:gen_alg_spher}, in particular providing a result valid for 
all unstable modules $M$. Theorem \ref{thm:weak_algebraic} may also lead to further progress on the Conjecture;  this is a subject for future research. 
  \item 
  Current approaches to Conjecture \ref{conj:gen_alg_spher} rely heavily upon knowledge of the 
structure of $\ext_{\cala}^{s}(\Sigma^{-s} M, \field)$, hence are difficult to 
generalize. One motivation for studying 
  chain-level constructions of the Lannes-Zarati map is to 
develop methods which do not depend upon such information. 
 \end{enumerate}
\end{rem}

\begin{rem}
\ 
\begin{enumerate}
 \item 
Implicit in the above  is the relationship between the Singer functors 
$R_s$ and the calculation of the (co)homology of the infinite loop space $QX$ 
associated to a pointed space $X$. 
  This relationship is clearer when working in {\em homology}, where the 
Dyer-Lashof operations appear naturally. (The reader is also referred to  
\cite[Section 2]{K_whitehead}, where a presentation is given 
  which makes use of the Hecke algebra.)
 \item 
The Singer functors appear naturally in other problems. For example, in 
\cite{K_whitehead}, Kuhn shows how unstable module theory in conjunction with an 
understanding of the Singer functors leads to a conceptual, modern proof of the 
Whitehead Conjecture.
\end{enumerate}
\end{rem}

%%%%%%%%%%%%%%%%%%%%%%%%%%%%%%%%%%%%%%%%%%%%%%%%%%%%%%%%%%%%%%%%%%%%%%%%%%%%%%%%
%%%%%
\subsection{Generalizations of the Lannes-Zarati homomorphism}
\label{subsect:gen_LZ}

Kuhn \cite{K_adams_filt} has recently provided a new perspective which may 
allow 
Lannes-Zarati theory to be 
extended to certain generalized cohomology theories. Kuhn's constructions rely 
on working with {\em highly structured commutative ring spectra} (here taken to be commutative 
$S$-algebras, in the sense of \cite{EKMM}). 

\begin{hyp}
\label{hyp:E_nice}
 Let $E$ be a commutative $S$-algebra such that $E$ is connective and the unit 
map induces a surjection $\pi_0 (S) \rightarrow \pi_0 (E)$. 
\end{hyp}

\begin{exam}
The ring spectrum $E$ can be taken to be the mod $p$ Eilenberg-MacLane spectrum 
$H\field_p$, for any prime $p$.
\end{exam}

For $X$ a spectrum, the $E$-based unstable Hurewicz map $\pi_* (X) \rightarrow 
E_* (\Omega^\infty X) $ is induced by the map of spaces: 
\[
 h : \Omega^\infty X 
 \rightarrow 
 \Omega^\infty 
 (E \wedge \Sigma^\infty \Omega^\infty X)
\]
which is adjoint to the map $\Sigma^\infty \Omega^\infty X \rightarrow E\wedge \Sigma^\infty \Omega^\infty X$
 induced by the unit of $E$.

To state Kuhn's result, recall that the $E$-based Adams resolution of $X$ is the natural decreasing 
filtration 
\[
 \ldots \rightarrow X(2) \rightarrow X(1) \rightarrow X(0) = X,
\]
where $X(s+1)$ is the fibre of $X(s) \rightarrow E \wedge X(s)$ induced by the 
unit map of $E$.

\begin{rem}
 The $E$-based Adams resolution gives another viewpoint on the $E$-Adams 
filtration introduced in Definition 
 \ref{defn:Adams_filtration}. 
 
 To illustrate this, observe that the map $X(s) \rightarrow E \wedge 
X(s)$ admits a retract after smashing with $E$ (using the multiplicative 
structure of $E$),   hence $$E\wedge \big(X(s+1) \rightarrow X(s)\big)$$ is null. In particular,  any map of spectra $U 
\rightarrow X$ that factors  across $X(s) \rightarrow X$ has Adams filtration at least $s$.
\end{rem}

As explained in \cite{K_adams_filt}, $\Sigma^\infty \Omega ^\infty X$ can be 
given a natural, non-unital 
commutative $S$-algebra structure and admits a decreasing 
{\em augmentation ideal filtration}: 
\[
 \ldots 
 \rightarrow 
 I^3 (X) \rightarrow 
I^2 (X) 
\rightarrow 
I (X) \simeq \Sigma^\infty \Omega ^\infty X. 
 \]

The fundamental new input from \cite{K_adams_filt} is then the following 

\begin{thm}
\label{thm:Kuhn_compare_filt}
Suppose that $E$ satisfies Hypothesis \ref{hyp:E_nice} and let $p$ be a prime. 
Then, localized away from $(p-1)!$, the Hurewicz map lifts to a 
map of towers:
\[
 h_s : \Omega^\infty X(s) 
 \rightarrow 
 \Omega^\infty (E \wedge I^{p^s} (X))
\]
that relates the $E$-based Adams filtration with the augmentation ideal 
filtration.
\end{thm}

\begin{rem}
 If $X$ is a connective spectrum, there are fibration sequences of spectra for $t \in \nat$:
 \[
  I^{t+1} (X) \rightarrow I^t (X) \rightarrow D_t X, 
 \]
where $D_t X:= X^{\wedge t} _{h \mathfrak{S}_t}$ is the $t$th extended power 
construction on the spectrum $X$.

In the case $E=H\field_p$, the Singer functors $R_s$ (or, rather, their homological 
counterparts) are closely related to the
 calculation of the  homology $E_* (D_{p^s} X)$ in terms of $E_* (X)$ (see the 
construction of the homological Singer functors in 
\cite{kuhn_mccarty,K_whitehead} and also 
\cite{K_adams_filt}). 
 This establishes the relationship of the above with Lannes-Zarati theory.
\end{rem}

Kuhn \cite{K_adams_filt} observes that Lannes-Zarati theory can be generalized 
when the augmentation 
ideal filtration is known to split after smashing with $E$. This occurs for 
example for
\begin{enumerate}
 \item 
 $X$ the suspension spectrum $\Sigma^\infty Z$ of a space $Z$;
 \item 
 $E$ the $n$th Morava $E$-theory at the prime $p$, after localizing  the filtration  with 
respect to $n$th Morava $K$-theory.
\end{enumerate}

\begin{exam}
\cite{K_adams_filt} 
 Take $E = H \field_p$ and suppose that $Z$ is the suspension of a pointed 
space. In this case, 
the homological Singer functors appear  as the primitives of the Hopf algebra 
$H_* (Q Z; \field_p)$, and the mod $p$ 
Hurewicz map induces: 
 \[
  \pi^S_* (Z) 
  \rightarrow 
  \mathcal{R} H_* (Z ; \field_p) := \bigoplus R_s H_* (Z; \field_p)
 \]
(using {\em homological} Singer functors). This is filtration-preserving and 
recovers Lannes-Zarati's higher Hopf invariants at {\em all} primes 
\cite{lannes_zarati_hopf,LZhh}. 
\end{exam}

\begin{exam}
Let $E$ be the $n$th Morava $E$-theory at the prime $p$. For the technical details 
involved in applying Theorem \ref{thm:Kuhn_compare_filt}, the reader is referred to 
\cite{K_adams_filt} and, in particular, \cite[Corollary 1.17]{K_adams_filt}. 
 
It is expected that, when $X$ is a spectrum with $E_* (X)$ a finitely-generated 
free $E_*$-module, that there will be an {\em algebraic} $E$-theory Lannes-Zarati morphism.
 \end{exam}

 This provides a tantalizing glimpse of higher chromatic analogues 
of the theory outlined in these notes for singular cohomology, and represents 
a rich field for future research.

%\bibliographystyle{amsalpha}
%\bibliography{destab.bib}

\providecommand{\bysame}{\leavevmode\hbox to3em{\hrulefill}\thinspace}
\providecommand{\MR}{\relax\ifhmode\unskip\space\fi MR }
% \MRhref is called by the amsart/book/proc definition of \MR.
\providecommand{\MRhref}[2]{%
  \href{http://www.ams.org/mathscinet-getitem?mr=#1}{#2}
}
\providecommand{\href}[2]{#2}

\end{document}